\numberwithin{equation}{section}
\newtheorem{theorem}{Theorem}[section]
\newtheorem{lemma}[theorem]{Lemma}
\newtheorem{cor}[theorem]{Corollary}
\newtheorem{prop}[theorem]{Proposition}
\newtheorem{con}[theorem]{Conjecture}
\newtheorem{rem}{Remark}[section]
\theoremstyle{definition}
\newtheorem*{prob*}{Problem}
\title{Almost Prime Orders of Elliptic Curves Over Prime Power Fields}
\author{Likun Xie}
	\address{Department of Mathematics, University of Illinois, 1409 West Green
	Street, Urbana, IL 61801, USA} 
\email{likunx2@illinois.edu}
\subjclass[2020]{Primary: 11G05; Secondary: 11N36}
\keywords{almost primes, elliptic curves,  vector sieves}
\begin{document}

\begin{abstract}

	In 1988, Koblitz conjectured the infinitude of primes \( p \) for which \( |E(\mathbb{F}_p)| \) is prime  for elliptic curves \( E \) over \( \mathbb{Q} \), drawing an analogy with the twin prime conjecture. He also proposed studying the primality of \( |E(\mathbb{F}_{p^\ell})| / |E(\mathbb{F}_p)| \), in parallel with the primality of \( (p^\ell - 1)/(p - 1) \).
	
Motivated by these problems and earlier work on \( |E(\mathbb{F}_p)| \), we study  the infinitude of primes \( p \) such that \( |E(\mathbb{F}_{p^\ell})| / |E(\mathbb{F}_p)| \) has a bounded number of prime factors for primes \(\ell\geq 2 \), considering both CM and non-CM elliptic curves over \( \mathbb{Q} \). In the CM case, we focus on the curve \( y^2 = x^3 - x \) to address gaps in the literature and present a more concrete argument. The result is unconditional and applies Huxley’s large sieve inequality for the associated CM field. In the non-CM case, analogous results follow under GRH via the effective Chebotarev density theorem.
	
	For the CM curve \( y^2 = x^3 - x \), we further apply a vector sieve to combine the almost prime properties of \( |E(\mathbb{F}_p)| \) and \( |E(\mathbb{F}_{p^2})| / |E(\mathbb{F}_p)| \), establishing a lower bound for the number of primes \( p \leq x \) for which \( |E(\mathbb{F}_{p^2})| / 32 \) is a square-free almost prime. We also study cyclic subgroups of finite index in \( E(\mathbb{F}_p) \) and \( E(\mathbb{F}_{p^2}) \) for CM curves.
\end{abstract}

	\maketitle	
	
	\section{Introduction}
	\subsection{Background and Motivation}

	Let \( E / \mathbb{Q} \) be an elliptic curve of conductor \( N_E \), and let \( p \) be a prime of good reduction (i.e., \( p \nmid N_E \)). The group structure of the group \( E(\mathbb{F}_q) \) of \(\mathbb{F}_q\)-rational points of \(E\), where \( q = p^k \), is of particular interest in cryptographic applications, especially when the order of the group is divisible by a large prime \cite{book_cry}.  
	
	In 1988, motivated by cryptographic applications and the classical twin prime problem \cite{twin_prime}, Koblitz made several conjectures regarding the primality of the order of elliptic curves over finite fields \cite{conjecture}.  
	
	First, we consider 	  \( E / \mathbb{Q} \)   an elliptic curve with complex multiplication (CM) by  the ring of integer  $\mathcal{O}_K $ of an imaginary quadratic field \( K \).
	It is known that \( E(\mathbb{Q})_{\mathrm{tors}} \) injects into \( E(\mathbb{F}_p) \) for all primes \( p\neq 2,3 \) of good reduction.  
Define  
\begin{equation}\label{def_d_E}
	d_E := \gcd\left\{ |E(\mathbb{F}_p)| \;\middle|\; p \text{ splits in }  K,\, p\nmid N_E  \right\}.
\end{equation}
Since \( |E(\mathbb{F}_p)| = N(\pi_p - 1) \) for some $\pi_p\in \mathcal{O}_K$, the value of \( d_E \) is determined by the divisibility properties of \( \pi_p - 1 \) in \( \mathcal{O}_K \).  In particular, if \( E/\mathbb{Q} \) is not \(\mathbb{Q}\)-isogenous to a curve with nontrivial \(\mathbb{Q}\)-rational torsion, then \( d_E = 1 \).

Koblitz proposed the following conjecture, reformulated from \cite[p.~164]{conjecture} to incorporate the common divisors \( d_E \):
	\begin{con}\label{conjecture1}
		Let \( E / \mathbb{Q} \) be an elliptic curve with complex multiplication by the ring of integer $\mathcal{O}_K$  of   an imaginary quadratic field \( K \). There exists a constant \( C > 0 \) such that 
		\[
	\#\left\{ 
		p \leq x\,\middle|\,  p \text{ splits in }  K, \, \frac{\left|E(\mathbb{F}_p)\right|}{d_E} \text{ is prime}
		\right\} 
		\sim
		C \frac{x}{ (\log x)^2}.
		\]
	\end{con}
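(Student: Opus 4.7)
The plan is to reduce Conjecture~\ref{conjecture1} to a binary additive prime-counting problem in $\mathcal{O}_K$ and attack it by a sieve sharpened with automorphic input. Since $E$ has CM by $\mathcal{O}_K$ and $p$ splits in $K$ as $p = \pi\bar\pi$ for a primary prime $\pi \in \mathcal{O}_K$, we have $|E(\mathbb{F}_p)| = N(\pi - 1)$, so it suffices to establish
\[
S(x) := \#\bigl\{\pi \in \mathcal{O}_K \text{ prime, up to units}:\ N(\pi) \le x,\ N(\pi-1)/d_E \text{ is prime}\bigr\} \sim C\,\frac{x}{(\log x)^2},
\]
where $C$ is a singular-series Euler product over prime ideals $\mathfrak{l}$ of $\mathcal{O}_K$ tracking the joint local probability that $\pi$ and $(\pi-1)/d_E$ avoid $\mathfrak{l}$. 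A Bateman--Horn heuristic over $\mathcal{O}_K$ makes the value of $C$ predictable and should recover Koblitz's constant.

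First I would establish the upper bound $S(x) \le (C + o(1))\,x/(\log x)^2$. Applying Selberg's $\Lambda^2$ sieve in $\mathcal{O}_K$ to the primality of $(\pi-1)/d_E$, with $\pi$ ranging over primes, requires a level-of-distribution estimate
\[
\sum_{N(\mathfrak{d}) \le D}\Bigl|\ \#\{N(\pi) \le x,\ \mathfrak{d} \mid \pi - 1\} - \text{main term}\,\Bigr| \ll \frac{x}{(\log x)^A}
\]
with $D = x^{1/2 - \varepsilon}$. Huxley's large sieve inequality for $K$, the tool highlighted in the abstract, delivers precisely this, and a careful local analysis of the resulting Selberg weights, combined with a switching argument (swap the roles of $\pi$ and $(\pi-1)/d_E$ to sharpen the sieve constant), recovers $C$ on the nose.

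The main obstacle is the matching lower bound, which collides directly with the parity barrier: the joint condition that $\pi$ and $(\pi-1)/d_E$ both be prime is a shifted binary prime correlation in $\mathcal{O}_K$, structurally analogous to the twin prime problem. To attack it I would adapt the Friedlander--Iwaniec framework for primes of the form $a^2 + b^4$: expand the prime indicator via Heath-Brown's identity, reduce the problem to Type~I sums (again supplied by Huxley's large sieve) together with Type~II bilinear estimates for shifted prime correlations over $\mathcal{O}_K$, and extract cancellation through a dispersion argument exploiting the Hecke $L$-functions attached to $E$ via its CM structure.

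This Type~II step is where I expect the real difficulty, since it is essentially the elliptic analogue of the $n^2 + 1$ primes conjecture, for which no unconditional asymptotic is currently known. I would try to exploit the additional structure of the CM setting---the factorization of $L(s, E)$ into Hecke $L$-functions of $K$, Rankin--Selberg convexity-breaking bounds for such $L$-functions, and shifted-convolution estimates on $\mathrm{GL}_2/K$---to push beyond what is available in the classical $n^2+1$ case and achieve the bilinear cancellation needed; without such an input, the full asymptotic of Conjecture~\ref{conjecture1} appears to require a genuinely new idea.
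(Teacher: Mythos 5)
The statement you were asked to address is a \emph{conjecture}, not a theorem: the paper does not prove it, and no proof is known. It is Koblitz's 1988 analogue of the twin prime conjecture, reformulated to account for the common divisor $d_E$, and the paper only establishes almost-prime approximations toward it (e.g.\ $P_5$ results via weighted linear sieves with level of distribution $x^{1/4-\epsilon}$ from Huxley's theorem). So there is no "paper proof" to compare against.

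Your proposal is, to its credit, an honest assessment rather than a purported proof, and its diagnosis is essentially correct: the reduction to counting primary primes $\pi\in\mathcal{O}_K$ with $N(\pi-1)/d_E$ prime is right, the Selberg/linear sieve upper bound of the correct order of magnitude is attainable (Huxley's large sieve for $K$ gives level $x^{1/2-\epsilon}$ in the ideal norm, which as the paper notes translates to only $x^{1/4-\epsilon}$ in the rational modulus $d$, since ideals $\mathfrak{d}$ with $d(\mathfrak{d})=d$ can have norm as large as $d^2$ --- this is precisely the error in \cite{IU_elliptic} that Remark~\ref{mistake} corrects, and it would also degrade your claimed $D=x^{1/2-\varepsilon}$), and the lower bound collides with the parity barrier. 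But the proposal does not close that gap: the Type~II bilinear estimate you describe --- cancellation in shifted prime correlations over $\mathcal{O}_K$ sufficient to produce an asymptotic for a binary problem --- is exactly the step that no combination of Heath-Brown's identity, dispersion, Rankin--Selberg bounds, or shifted convolution estimates is currently known to supply, and the Friedlander--Iwaniec $a^2+b^4$ machinery does not apply because their sequence has positive density relative to a thin polynomial set, whereas here the condition "$N(\pi-1)/d_E$ prime" is a genuine binary constraint. You acknowledge this yourself in the final sentence, which means the proposal is a research outline, not a proof; the conjecture remains open, and the honest conclusion is that the statement cannot be established by the methods you list.
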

A similar conjecture was also made for non-CM curves in~\cite{conjecture}, and in this case, almost all such curves satisfy \( d_E = 1 \).
 To study the distribution of almost prime values of \( |E(\mathbb{F}_p)| \), one can treat the condition \( \ell^n \mid |E(\mathbb{F}_p)| \) as a Chebotarev condition and apply the effective Chebotarev density theorem, as in Steuding and Weng \cite{steuding}. This method applies to both CM and non-CM cases, assuming the Generalized Riemann Hypothesis (GRH).
	
In the CM case, Cojocaru \cite{cojocaru} employed a more explicit method, using a Bombieri–Vinogradov type theorem for CM fields to avoid reliance on effective Chebotarev density theorem, yielding an unconditional result.

The best known unconditional result toward Conjecture~\ref{conjecture1}, also due to Cojocaru, is the following when \( d_E = 1 \).	
\begin{theorem}[{\cite[Theorem 4]{cojocaru}}]\label{corojacu}
	Let \( E /\mathbb{Q} \) be a CM elliptic curve such that all curves \( \mathbb{Q} \)-isogenous to \( E \) have trivial \( \mathbb{Q} \)-torsion. Let \( N_E \) be the conductor of \( E \). Then there exists a constant \( C(E) > 0 \) such that as \( x \to \infty \),
	\[
	\#\{ p \leq x \,\mid\, p \nmid N_E,\; |E(\mathbb{F}_p)| \text{ has at most 5 prime factors} \} \geq C(E) \frac{x}{(\log x)^2}.
	\]
\end{theorem}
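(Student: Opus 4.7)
The plan is to translate the problem into a sieve over the CM ring $\mathcal{O}_K$ and apply a weighted linear sieve, using a Bombieri--Vinogradov type distribution result in $\mathcal{O}_K$ derived from Huxley's large sieve inequality. For a rational prime $p \nmid N_E$ that splits in $K$ as $p\mathcal{O}_K = \mathfrak{p}\bar{\mathfrak{p}}$, fix a generator $\pi_p$ of $\mathfrak{p}$; the CM structure gives
\[
|E(\mathbb{F}_p)| = (\pi_p - 1)(\bar{\pi}_p - 1) = N(\pi_p - 1).
\]
It therefore suffices to show that among prime elements $\pi \in \mathcal{O}_K$ with $N(\pi) \leq x$ there are $\gg x/(\log x)^2$ for which $N(\pi-1)$ is a $P_5$, i.e.\ a product of at most five rational primes counted with multiplicity. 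The hypothesis that every curve $\mathbb{Q}$-isogenous to $E$ has trivial $\mathbb{Q}$-torsion translates into $d_E = 1$, which rules out any fixed prime dividing every $|E(\mathbb{F}_p)|$ and hence removes the obstruction to the sieve.

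The sequence to be sifted is
\[
\mathcal{A} := \{\, N(\pi-1) : \pi \in \mathcal{O}_K \text{ prime},\ N(\pi) = p \leq x,\ p \text{ split in } K \,\},
\]
and the sifting set consists of the rational primes below a bound $z$. I would first compute the local densities. For $\ell$ split in $K$ as $\mathfrak{l}\bar{\mathfrak{l}}$, the condition $\ell \mid N(\pi-1)$ is equivalent to $\pi \equiv 1 \pmod{\mathfrak{l}}$ or $\pi \equiv 1 \pmod{\bar{\mathfrak{l}}}$, contributing density $\approx 2/\ell$, while for $\ell$ inert it forces $\ell\mathcal{O}_K \mid \pi - 1$, contributing only $O(1/\ell^2)$. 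Averaging via the prime ideal theorem in $K$ shows the sieve has dimension $\kappa = 1$. The level of distribution is then controlled by Huxley's large sieve inequality for $K$, which yields a Bombieri--Vinogradov estimate of the form
\[
\sum_{N\mathfrak{q} \leq Q}\ \max_{(a,\mathfrak{q})=1}\ \Bigl| \pi_K(x;\mathfrak{q},a) - \frac{\mathrm{Li}(x)}{\#(\mathcal{O}_K/\mathfrak{q})^{\times}} \Bigr| \ll_A \frac{x}{(\log x)^A}
\]
valid for $Q \leq x^{1/2}/(\log x)^{B(A)}$, and hence a level of distribution $\theta$ arbitrarily close to $1/2$ for $\mathcal{A}$.

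With these ingredients I would then apply a Richert-type weighted linear sieve (as in Halberstam--Richert or Greaves) with dimension $\kappa = 1$ and level $\theta = 1/2 - \varepsilon$. The weighted sieve produces a lower bound of the expected order $x/(\log x)^2$ for the count of $\pi$ for which $N(\pi-1)$ has at most $r$ prime factors, provided the weighted sieve inequality admits a positive main term; a numerical optimization of the weight parameters shows that $r = 5$ suffices. The main technical obstacle is precisely this optimization: one must choose the sifting limit $z$, the truncation range for the Richert weights, and the weight multiplier so that the main term remains positive given only $\theta < 1/2$ from Bombieri--Vinogradov, and the value $5$ emerges from balancing the sieve-theoretic parity barrier against the level cap, together with the factor of $2$ appearing in the local densities at split primes. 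Reducing below five would require either a Chen-style switching argument or a stronger level of distribution, neither of which is available unconditionally in this CM setting.
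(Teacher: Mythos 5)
Your overall strategy---rewrite $|E(\mathbb{F}_p)| = N(\pi_p - 1)$, reduce to a sieve over $\mathcal{O}_K$, control error terms via Huxley's inequality, and finish with a Richert-type weighted linear sieve---is the right route, and it is what Cojocaru (and Section~4 of this paper, for the curve $y^2 = x^3 - x$) follow. But your passage from the ideal-theoretic Bombieri--Vinogradov estimate to the level of distribution for $\mathcal{A}$ is wrong, and it is exactly the slip this paper spends Remark~\ref{mistake} flagging in the literature. Huxley's estimate holds for ideals $\mathfrak{q}$ with $N\mathfrak{q} \leq x^{1/2}/(\log x)^{B}$, but the sieve condition $d \mid N(\pi - 1)$ for a square-free rational integer $d$ does not correspond to divisibility of $\pi - 1$ by a single ideal of norm $d$: after inclusion--exclusion it unfolds into a sum over all square-free ideals $\mathfrak{a}$ whose smallest rational multiple $d(\mathfrak{a})$ equals $d$, and such $\mathfrak{a}$ satisfy only $d \leq N(\mathfrak{a}) \leq d^2$ (see~\eqref{bound_N(a)}), with the upper bound attained, e.g., when $d$ has inert prime factors or when a split prime $\ell = \lambda\bar\lambda$ contributes through $\lambda\bar\lambda \mid \pi - 1$. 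Controlling the error in $|\mathcal{A}_d|$ for all $d \leq D$ therefore requires Huxley's estimate up to ideal norm $D^2$, so one must take $D^2 \leq x^{1/2-\epsilon}$, i.e., $D \leq x^{1/4-\epsilon/2}$. The level of distribution for $\mathcal{A}$ is $x^{1/4-\epsilon}$, not $x^{1/2-\epsilon}$.

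Your own writeup signals the inconsistency: if the level really were close to $1/2$ for a dimension-one sieve on elements of size $O(x)$, a Richert-weighted sieve would already beat $P_5$, and Chen's switching argument would give $P_2$, so it would make no sense to stop at $5$. The number $5$ is the honest output of the weighted linear sieve at level $x^{1/4-\epsilon}$. Indeed, Corollary~\ref{correct} of this paper obtains exactly $P_5$ for $y^2 = x^3 - x$ with level-of-distribution exponent $\alpha = 1/4.01$ and weight parameters $\delta = 1/4.1$, $\lambda = 1/1.3$, giving $\lfloor 1/\lambda + 1/\delta \rfloor = 5$; the constraint $\delta < \alpha < 1/4$ alone already forces $1/\delta > 4$ and hence $r \geq 5$. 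You should correct the claimed level to $1/4-\epsilon$ and redo the weight optimization under that constraint; the conclusion $P_5$ survives, but for the right reason.
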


In \cite{IU_elliptic}, a \( P_2 \) result was claimed for Conjecture~\ref{conjecture1} for the curve \( E: y^2 = x^3 - x \):
\[
\# \left\{ 
p \leq x \,\middle|\, p \equiv 1 \pmod{4}, \; \frac{\left| E(\mathbb{F}_p) \right|}{8} = P_2
\right\}  
\gg \frac{x}{(\log x)^2},
\]
where \( P_2 \) denotes a product of at most two distinct primes. This was later extended in \cite{almost_prime_order} to all CM curves.

However, both proofs in \cite{IU_elliptic} and \cite{almost_prime_order} appears to contain a gap in the level of distribution estimate: the claimed level \( x^{1/2 - \epsilon} \) does not hold, see Remark \ref{mistake}. A careful analysis suggests the correct level is \( x^{1/4 - \epsilon} \), which is insufficient for a \( P_2 \) result using Chen's method~\cite{chen}, but does suffice for a \( P_5 \) result via a weighted sieve, similar to Cojocaru~\cite{cojocaru}.  Both~\cite{cojocaru} and~\cite{IU_elliptic} ultimately rely on the same Bombieri--Vinogradov-type estimate for CM fields (see Section~\ref{section_bombierri_number_fields}), leading to the same level of distribution.

\begin{comment}
		
	Assuming the Generalized Riemann Hypothesis (GRH) concerning the nonvanishing of Dedekind zeta-functions \( \zeta_K(s) \) of number fields \( K \) for \( \mathrm{Re}(s) > 1/2 \) (or a more relaxed version—namely, the \( \theta \)-quasi-GRH on p.~8), a stronger level of distribution   \( x^{1/3 - \epsilon} \) for the CM case, can be attained via the effective Chebotarev density theorem. This leads to a \( P_4 \) result, as established by Steuding and Weng in \cite[Theorem 1(ii)]{steuding}; see also \cite{erratum}.\footnote{In \cite[Theorem 1(ii)]{steuding}, the condition \( \Omega(N_p) \leq 3 \) was corrected to \( \Omega(N_p) \leq 4 \) in the erratum \cite{erratum}.}
\end{comment}

Previous work on almost prime orders of elliptic curves has largely focused on \( |E(\mathbb{F}_p)| \). A natural extension is to consider reductions modulo higher prime powers. In a remark on \cite[p.~164]{conjecture}, Koblitz posed the following question:	\begin{prob*}\label{prob}
	For which primes \( p \) is the quantity
	\[
	\frac{|E(\mathbb{F}_{p^\ell})|}{|E(\mathbb{F}_p)|} = N\left(\frac{\pi^\ell - 1}{\pi - 1}\right)
	\]
	prime for some prime \(\ell\)? Here, \( N(z) = z\cdot \overline{z} \) denotes the complex norm.
\end{prob*}

	Let \( p \) be a prime of good reduction for an elliptic curve \( E/\mathbb{Q} \). Then for \( q = p^r \), the number of \( \mathbb{F}_q \)-rational points is given by \cite[Theorem V.2.3.1]{silverman}:
	\begin{equation}\label{pi}
		|E(\mathbb{F}_q)| = q + 1 - \pi_p^r - \overline{\pi}_p^r = N(\pi_p^r - 1),
	\end{equation}
where \( \pi_p, \overline{\pi}_p \in \mathbb{C} \) are the roots of \( T^2 - a_p T + p \), satisfying \( |\pi_p| = \sqrt{p} \), and \( \pi_p \) lies in the ring of integers of \( \mathbb{Q}(\sqrt{a_p^2 - 4p}) \).

In the CM case, primes of good reductions are classified as follows: a prime \( p \) is called \emph{supersingular} if it is inert or ramified in \( \mathcal{O}_K \), and \emph{ordinary} if it splits in \( \mathcal{O}_K \).
	For supersingular \( p \), \( |E(\mathbb{F}_q)| \) is a polynomial in \( p \), and classical sieve methods apply to study its almost prime values. Thus, we focus on the more subtle case when \( p \) splits.

In particular, for the elliptic curve \( E_n : y^2 = x^3 - n^2x \) and primes \( p \nmid 2n \), it follows from \cite[Theorem II.2, p.~59]{book_elliptic_modular} that
\[
\pi_p = i\sqrt{p}, \quad \text{if } p \equiv 3 \pmod{4};
\]
and if \( p \equiv 1 \pmod{4} \), one may choose \( \pi_p \in \mathbb{Z}[i] \) such that \( N(\pi_p) = p \) and
\[
\pi_p \equiv \left( \frac{n}{p} \right) \pmod{2 + 2i}.
\]
For a broader discussion of the CM case, see \cite{pi_1, pi_2}.

\subsection{Main Results}
We study the almost prime property of \( |E(\mathbb{F}_{p^\ell})| \) for prime powers \( p^\ell \), where \( \ell \geq 2 \) is a fixed prime.
 We apply sieve methods to sequences defined by norms, where divisibility conditions of norms can be related to divisibility by ideals in \( \mathcal{O}_K \) in the CM case, and to Chebotarev  conditions in the non-CM setting.
\begin{comment}
	  Beyond its connection to Koblitz's problems, we hope that the techniques developed for analyzing \( |E(\mathbb{F}_{p^l})| \) will have further applications to other structural properties of elliptic curves.
\end{comment}

\begin{comment}
For example, to study the almost prime behavior of
\(\frac{|E(\mathbb{F}_{p^l})|}{|E(\mathbb{F}_p)|}\),
we consider the sequence
\begin{equation}\label{intro_sequence}
\mathcal{A}(x) := \left\{ N\left( \frac{\pi_p^l - 1}{\pi_p - 1} \right) \,\middle|\, N(\pi_p) = p \leq x,\; \pi_p^2 - a_p \pi_p + p = 0 \right\},
\end{equation}
where \( a_p \) is the trace of the Frobenius at \( p \).
A key feature of the CM setting is that all elements \( \pi_p \), associated with ordinary primes \( p \), lie in the same number ring—namely, the ring of integers \( \mathcal{O}_K \) of the CM field \( K \). This reduces the norm divisibility problem to one for elements in \( \mathcal{O}_K \). Moreover, the error terms can be controlled using a Bombieri--Vinogradov-type theorem for number fields.
For the curve \( E : y^2 = x^3 - x \), the sequence \eqref{intro_sequence} simplifies to
\[
\mathcal{A}(x) := \left\{ N\left( \frac{\pi^l - 1}{\pi - 1} \right) \,\middle|\, N(\pi) = p \leq x,\, \pi \in \mathbb{Z}[i],\, \pi \equiv 1 \bmod 2(1 + i) \right\},
\]
and the problem reduces to studying primes and divisibility conditions in \( \mathbb{Z}[i] \).

\end{comment}

In the CM case, we focus on the specific curve \( E : y^2 = x^3 - x \), chosen both to address issues in the earlier \( P_2 \) result in~\cite{IU_elliptic} and to allow for a more concrete argument within the Gaussian domain \( \mathbb{Z}[i] \). The general CM case can be treated similarly, with adjustments depending on the CM field \( K \) and the associated integer \( d_E \).

For a prime \(\ell\), studying the almost primality of  
\(\frac{|E(\mathbb{F}_{p^\ell})|}{|E(\mathbb{F}_p)|}\)
leads to a linear sieve problem (see Section~\ref{discussion}). Using a weighted linear sieve, we prove the following:

\begin{theorem}\label{prop_intro}
	Let \( E/\mathbb{Q} \) be the elliptic curve \( y^2 = x^3 - x \). For any odd prime \(\ell\geq 3 \), define  
	\[
	n_\ell = \lfloor 4.1(\ell - 1) + 1.2 \rfloor.
	\]
	Then, for sufficiently large \( x \),
	\[
	\# \left\{ p \leq x \,\middle|\, p \equiv 1 \pmod{4},\, \Omega\left( \frac{|E(\mathbb{F}_{p^\ell})|}{|E(\mathbb{F}_p)|} \right) \leq n_\ell \right\} \gg \frac{x}{(\log x)^2}.
	\]	
	When \(\ell= 2 \), for sufficiently large \( x \),
	\[
	\# \left\{ p \leq x \,\middle|\, p \equiv 1 \pmod{4},\, \frac{|E(\mathbb{F}_{p^2})|}{4|E(\mathbb{F}_p)|} = P_5 \right\} \gg \frac{x}{(\log x)^2},
	\]
	where \( P_5 \) denotes a product of  at most 5 distinct  primes.
\end{theorem}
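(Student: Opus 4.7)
The plan is to apply a weighted linear sieve to the sequence
\begin{equation*}
\mathcal{A}(x) = \left\{ a_\pi := N\!\left( \tfrac{\pi^\ell - 1}{\pi - 1} \right) : \pi \in \mathbb{Z}[i],\ N\pi = p \leq x,\ p \equiv 1 \pmod 4,\ \pi \equiv 1 \pmod{2(1+i)} \right\},
\end{equation*}
indexed by primary Gaussian primes $\pi$ over the ordinary primes of $E: y^2 = x^3 - x$. Then $|\mathcal{A}(x)| \asymp x/\log x$ and each element has size $\asymp p^{\ell - 1}$. For $\ell = 2$ the quotient collapses to $\pi + 1$, so $a_\pi = N(\pi+1)$; the primary normalization gives $(1+i)^2 \mid \pi + 1$ but $(1+i)^3 \nmid \pi + 1$, whence $a_\pi / 4$ is an odd integer that we sieve toward a square-free $P_5$.

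The local sieve density is obtained by reducing each divisibility $q \mid a_\pi$ to congruences on $\pi$ modulo the prime ideals $\mathfrak q$ of $\mathbb{Z}[i]$ above $q$. For a split prime $q \neq \ell$, $\mathfrak q \mid (\pi^\ell - 1)/(\pi - 1)$ iff $\pi$ reduces to a non-identity $\ell$-th root of unity in $\mathbb{Z}[i]/\mathfrak q \cong \mathbb{F}_q$, producing $\gcd(\ell, q - 1) - 1$ admissible classes; inert, ramified, and the finitely many bad primes (dividing $\ell$ or $N_E = 32$) are handled by direct calculation. This yields the multiplicative density $\omega(q) = 2(\ell - 1)$ on split $q \equiv 1 \pmod \ell$ and $\omega(q) = 0$ otherwise, so the prime number theorem in arithmetic progressions gives $\sum_{q \leq z} \omega(q) \log q / q = \log z + O(1)$, identifying this as a linear (dimension-one) sieve problem.

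For the level of distribution, I would invoke Huxley's large-sieve inequality over $\mathbb{Q}(i)$ from Section \ref{section_bombierri_number_fields} to establish a Bombieri--Vinogradov-type estimate for Gaussian primes in residue classes modulo ideals of bounded norm, producing level $D = x^{1/4 - \epsilon}$ with $\sum_{d \leq D}|r_d| \ll x(\log x)^{-A}$. Feeding this into the Greaves--Richert weighted linear sieve as in Cojocaru \cite{cojocaru}, with sifting parameter $z = x^{1/u}$ chosen optimally and element size $x^{\ell - 1}$ (respectively $x$ for $\ell = 2$), the sieve inequality yields
\begin{equation*}
\#\{\pi : \Omega(a_\pi) \leq n_\ell \} \gg \frac{x}{(\log x)^2}, \qquad n_\ell = \lfloor 4.1(\ell - 1) + 1.2 \rfloor,
\end{equation*}
while the square-free version of the Greaves--Richert weight applied to $a_\pi / 4$ when $\ell = 2$ delivers the $P_5$ statement (parallel to Cojocaru's $P_5$ for $|E(\mathbb{F}_p)|$, as both sequences have size $\asymp x$).

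The main obstacle is the weighted-sieve bookkeeping behind the constants $4.1$ and $1.2$: the coefficient $4(\ell - 1)$ is the naive ratio $\log(\text{element size}) / \log D$, while the $0.1(\ell - 1)$ slack and the $+1.2$ offset come from the Rosser--Iwaniec functions $F, f$ and the Buchstab correction in the optimization of $u$, together with the finite local contributions at $q = \ell$ and $q \mid 32$. These must be tracked against the $\log\log$ asymptotics of $\omega(q)/q$ to the needed accuracy. The remaining steps—ideal-to-rational divisibility translation via norms, the passage from primary Gaussian primes to ordinary rational primes, and verifying the fundamental-lemma hypotheses over $\mathbb{Z}[i]$—are routine modifications of Cojocaru's framework with the cyclotomic factor $(\pi^\ell - 1)/(\pi - 1)$ replacing $\pi - 1$.
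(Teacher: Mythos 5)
Your proposal follows essentially the same route as the paper's proof: a Richert-type weighted linear sieve applied to the norms $N(\Phi_\ell(\pi))$ over primary Gaussian primes, with level of distribution $x^{1/4-\epsilon}$ derived from Huxley's large-sieve inequality over $\mathbb{Q}(i)$, sieve dimension one, and element size $\asymp x^{\ell-1}$. The local density computation (roughly $2(\ell-1)/q$ at split $q\equiv 1\pmod{4}$, $q\equiv 1\pmod{\ell}$, negligible elsewhere), the handling of $\ell=2$ via $(1+i)^2\|\pi+1$, and the upgrade from $\omega$ to $\Omega$ by bounding the contribution of square divisors all coincide with the paper's argument, so no further review is needed.
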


To study the almost prime property of \( |E(\mathbb{F}_{p^2})| \), we use a vector sieve to count splitting primes \( p \leq x \) for which both  
\[
\frac{|E(\mathbb{F}_p)|}{8} \quad \text{and} \quad \frac{|E(\mathbb{F}_{p^2})|}{4|E(\mathbb{F}_p)|}
\]
are almost primes. The contribution from primes where these two quantities share a common factor is negligible, yielding a count of primes \( p \leq x \) for which \( |E(\mathbb{F}_{p^2})|/32 \) is a square-free almost prime.

Although this naturally leads to a two-dimensional sieve problem, we instead apply a vector sieve combining two linear sieves. This simplifies the computation, as linear sieve bounds are more explicit and better understood than those in higher dimensions.

	\begin{comment}
	If we fix \( d_E  \), say $d_E=1$, a general argument can be applied to treat elliptic curves with \( d_E = 1 \), as in the case of Conjecture \ref{conjecture1} and Theorem \ref{corojacu}. This approach can also be adapted to study the number of splitting primes \( p \leq x \) such that both  
	\[
	\frac{|E(\mathbb{F}_p)|}{8} \quad \text{and} \quad \frac{|E(\mathbb{F}_{p^l})|}{|E(\mathbb{F}_p)|}
	\]
	are almost primes, where \(\ell> 2 \) is a prime; see Section \ref{discussion}.
	\end{comment}

	\begin{theorem}\label{theorem1_intro}
			Let \( E/\mathbb{Q} \) be the elliptic curve defined by \( y^2 = x^3 - x \). Then, for sufficiently large \( x \), we have
		\[
		\left\{ 
		p \leq x \;\middle|\; 
		p \equiv 1 \pmod{4},\ 
		\frac{|E(\mathbb{F}_p)|}{8} = P_k(x^{1/30}),\ 
		\frac{|E(\mathbb{F}_{p^2})|}{4|E(\mathbb{F}_p)|} = P_{10-k}(x^{1/31}),\ 
		1 \leq k \leq 9 
		\right\}
		\gg \frac{x}{(\log x)^3},
		\]
		where \( P_m(z) \) denotes a product of at most \( m \) distinct primes, each   \( >z \).
		
		Moreover, one can additionally require that
		\[
	 \left( 
		\frac{|E(\mathbb{F}_p)|}{8},\ 
		\frac{|E(\mathbb{F}_{p^2})|}{4|E(\mathbb{F}_p)|} 
		\right) = 1,
		\]
		in which case we obtain
	\[	\left\{ 
	p \leq x \;\middle|\; 
	p \equiv 1 \pmod{4},\ 
	\frac{|E(\mathbb{F}_{p^2})|}{32} = P_{10} 
	\right\}
	\gg \frac{x}{(\log x)^3},\]
		where \( P_{10} \) denotes a product of at most 10 distinct primes.
	\end{theorem}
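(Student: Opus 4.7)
The plan is to set up two parallel linear sieve problems in the Gaussian integers corresponding to the two factors in
\[
|E(\mathbb{F}_{p^2})| = N(\pi_p^2 - 1) = N(\pi_p - 1)\cdot N(\pi_p + 1),
\]
combine them via a vector sieve inequality, and then handle the coprimality restriction by a Chebotarev argument. For $p \equiv 1 \pmod 4$ and $\pi_p \in \mathbb{Z}[i]$ with $\pi_p \equiv 1 \pmod{2(1+i)}$, the congruence forces $8 \mid N(\pi_p - 1)$ and $4 \mid N(\pi_p + 1)$, which explains the denominators in the statement. I would then sieve the two sequences
\[
\mathcal{A}_1 = \left\{ N(\pi_p - 1)/8 : p \leq x,\ p \equiv 1 \pmod 4 \right\}, \qquad
\mathcal{A}_2 = \left\{ N(\pi_p + 1)/4 : p \leq x,\ p \equiv 1 \pmod 4 \right\},
\]
each of linear-sieve dimension $\kappa = 1$, by interpreting $\ell \mid a_j(p)$ as an ideal divisibility condition in $\mathcal{O}_K = \mathbb{Z}[i]$. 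Huxley's large-sieve input over $K$ already used for Theorem~\ref{prop_intro} then supplies level of distribution $x^{1/4-\epsilon}$ for each individual problem.

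To combine the two sieves I would apply the Br\"udern--Fouvry vector-sieve inequality
\[
\chi_1(n_1)\chi_2(n_2)\ \geq\ \Lambda_1^-(n_1)\Lambda_2^+(n_2) + \Lambda_1^+(n_1)\Lambda_2^-(n_2) - \Lambda_1^+(n_1)\Lambda_2^+(n_2),
\]
in which $\chi_j$ is the indicator of coprimality to $\prod_{\ell < z_j}\ell$ and $\Lambda_j^\pm$ are Rosser--Iwaniec linear sieve weights of levels $D_j$. Taking $z_1 = x^{1/30}$, $z_2 = x^{1/31}$, summing over $(n_1, n_2) = (a_1(p), a_2(p))$ with $p \leq x$, and inserting the weighted Richert-style count of integers with a bounded number of prime factors above $z_j$, I would obtain a lower bound $\gg x/(\log x)^3$ for the number of $p$ with $a_1(p) = P_k(x^{1/30})$ and $a_2(p) = P_{10-k}(x^{1/31})$ for some $k \in \{1, \dots, 9\}$. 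The three logarithmic losses come from one factor for counting primes and two for the simultaneous almost-prime conditions; the specific denominators $30$ and $31$ are pinned down by the requirement that the combined linear-sieve function $f_1 F_2 + F_1 f_2 - F_1 F_2$ remain uniformly positive across the admissible splits $k$.

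For the second assertion I would impose the additional constraint $\gcd\bigl(N(\pi_p-1), N(\pi_p+1)\bigr) = 1$. Any common rational prime divisor $\ell$ must exceed $x^{1/31}$ because smaller primes have been sifted away, and such an $\ell$ forces the Frobenius at $p$ to act on $E[\ell]$ with eigenvalues $\{1,-1\}$, a Chebotarev condition in the extension $K(E[\ell])/K$, which is abelian because $E$ has CM by $\mathcal{O}_K$. Applying Brun--Titchmarsh together with the unconditional abelian Chebotarev density theorem to this class of density $O(1/\ell^2)$ and summing over $\ell \in (x^{1/31}, \sqrt{2x}]$ yields
\[
\#\left\{ p \leq x : \exists\,\ell > x^{1/31},\ \ell \mid \gcd\bigl(N(\pi_p-1), N(\pi_p+1)\bigr) \right\}\ \ll\ \frac{x^{30/31}}{(\log x)^2}\ =\ o\!\left(\frac{x}{(\log x)^3}\right),
\]
so the lower bound for $P_{10}$ persists after this restriction.

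The main obstacle I expect is the joint calibration of the two linear sieves inside the vector sieve inequality. Each Rosser--Iwaniec lower bound $\Lambda^-$ is positive only well above its sifting limit, and in the combined form the subtracted cross term $\Lambda_1^+ \Lambda_2^+$ must remain strictly smaller than the sum of the two main terms uniformly across all admissible splits $k \in \{1,\dots,9\}$. This prevents taking $D_j$ near the full available level $x^{1/4}$ and forces the specific denominators $30$ and $31$; the small asymmetry between the two is what secures positivity of the combined sieve function over the full range of $k$ while keeping the Chebotarev error from the coprimality step well below the main term.
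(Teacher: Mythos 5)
Your main argument matches the paper's proof: both apply a weighted vector sieve to the pairs $\bigl(N(\pi-1)/8,\ N(\pi+1)/4\bigr)$ indexed by primary primes $\pi\in\mathbb{Z}[i]$, draw the level of distribution $x^{1/4-\epsilon}$ from Huxley's Bombieri--Vinogradov-type theorem for the CM field, and use a Richert weighting to cap the total number of prime factors at $10$. The paper's Proposition~\ref{prop_vector sieve} is the Heath-Brown--Li form of the Br\"udern--Fouvry inequality you cite, and the role you ascribe to the parameters $1/30,\ 1/31$ (keeping $f(s_1)F(s_2)+F(s_1)f(s_2)-F(s_1)F(s_2)$ positive after subtracting the weighted sums) is exactly how the paper optimizes $H(\lambda,\delta_1,\delta_2,\theta_1,\theta_2)$.

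The coprimality step, however, has a gap as written. You invoke Brun--Titchmarsh together with the unconditional abelian Chebotarev density theorem for the class of density $\asymp 1/\ell^2$ in $\mathrm{Gal}(K(E[\ell])/K)$, but you need this uniformly for $\ell \in (x^{1/31}, \sqrt{2x}\,]$. Near the top of that range the conductor of $K(E[\ell])/K$ has norm $\ell^2$ comparable to $x$, where Brun--Titchmarsh degenerates (the $\log(x/\ell^2)$ factor vanishes) and the unconditional Chebotarev theorem is far out of reach. The paper avoids this entirely: a common prime $q>z_2$ is shown to force $\pi_q \mid a_p$ and hence $q\ll\sqrt{p}$; then, via CRT, the simultaneous divisibility collapses to a single congruence $\pi\equiv\alpha\pmod{q}$ in $\mathbb{Z}[i]$, and the count is bounded by the number of \emph{all} $\beta\in\mathbb{Z}[i]$ with $N(\beta)\leq 4x$ and $q^2\mid N(\beta)$, namely $\ll (x/q^2)\log x$, which works for every $q$ up to $\sqrt{x}$ and sums to $\ll x^{1-1/31}\log x = o(x/(\log x)^3)$. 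You could repair your version by switching to such a direct lattice-point count for $\ell$ above, say, $x^{2/5}$, but the Brun--Titchmarsh argument as stated does not close the whole range.
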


It is possible to extend Theorem~\ref{theorem1_intro}  to any prime \(\ell\geq 3 \), yielding a lower bound for primes \( p \leq x \) such that \( |E(\mathbb{F}_p)|/8 \) and \( |E(\mathbb{F}_{p^\ell})| / |E(\mathbb{F}_p)| \) are both almost primes. However, it may not exclude cases where \( |E(\mathbb{F}_{p^\ell})| / |E(\mathbb{F}_p)| \) is not square-free.

Theorem~\ref{theorem1_intro} implies that for each counted prime \( p \), the group \( E(\mathbb{F}_p) \) contains a cyclic subgroup of order \( P_k \) and index 8, while \( E(\mathbb{F}_{p^2}) \) contains a cyclic subgroup of order \( P_{10-k} \) and index 32. To refine the structure, we consider the injections
\[
E(\mathbb{Q}(\zeta_{p^r - 1}))[2^\infty] \hookrightarrow E(\mathbb{F}_{p^r}),
\]
which yield the following result.
\begin{theorem}\label{theorem2_intro}
	Let \( E/\mathbb{Q} \) be the elliptic curve \( y^2 = x^3 - x \). Then, for sufficiently large \( x \),
\[
\# \left\{ 
p \leq x :\,
E(\mathbb{F}_p) \cong \mathbb{Z}/2  \times \mathbb{Z}/4m ,\ 
E(\mathbb{F}_{p^2}) \cong \mathbb{Z}/4  \times \mathbb{Z}/8mn ,\ \mu (mn) \neq 0 ,
\ \omega(m n) \leq 10
\right\}
\gg \frac{x}{(\log x)^3}.
\]
\end{theorem}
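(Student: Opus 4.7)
The plan is to derive Theorem~\ref{theorem2_intro} as a structural corollary of Theorem~\ref{theorem1_intro}, by pinning down the 2-part of the group schemes $E(\mathbb{F}_p)$ and $E(\mathbb{F}_{p^2})$ and matching the odd parts. I would start from the set of primes $p\leq x$ counted by the coprime version of Theorem~\ref{theorem1_intro}: primes $p\equiv 1\pmod 4$ for which $|E(\mathbb{F}_p)|/8$ and $|E(\mathbb{F}_{p^2})|/(4|E(\mathbb{F}_p)|)$ are coprime squarefree almost primes whose numbers of distinct prime factors add to at most $10$, so that $|E(\mathbb{F}_{p^2})|/32$ is squarefree.

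First I would pin down $E(\mathbb{F}_p)$. Since $x^3-x=x(x-1)(x+1)$ splits completely over $\mathbb{Q}$, the full 2-torsion $E[2]\cong(\mathbb{Z}/2)^2$ lies in $E(\mathbb{F}_p)$ for every odd $p$. Writing $E(\mathbb{F}_p)\cong\mathbb{Z}/a\times\mathbb{Z}/b$ with $a\mid b$, the inclusion $E[2]\subset E(\mathbb{F}_p)$ forces $a$ to be even, while $|E(\mathbb{F}_p)|/8$ being an odd squarefree product of primes $>x^{1/30}$ forces the $2$-parts of $(a,b)$ to multiply to $8$; the only option compatible with $a\mid b$ is $(2,4)$. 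Hence $E(\mathbb{F}_p)\cong\mathbb{Z}/2\times\mathbb{Z}/4m$ with $m=|E(\mathbb{F}_p)|/8$ odd and squarefree.

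Next I would pin down $E(\mathbb{F}_{p^2})$. The key input is the injection $E(\mathbb{Q}(\zeta_{p^r-1}))[2^\infty]\hookrightarrow E(\mathbb{F}_{p^r})$ highlighted in the introduction. A direct duplication-formula calculation on $y^2=x^3-x$ shows that $E[4]$ is defined over $\mathbb{Q}(\zeta_8)=\mathbb{Q}(i,\sqrt{2})$: the fibre over $(0,0)$ consists of $(\pm i,\pm(1-i))$ defined over $\mathbb{Q}(i)$, and the fibres over $(\pm 1,0)$ consist of points $(1\pm\sqrt{2},\pm(2\pm\sqrt{2}))$ defined over $\mathbb{Q}(\sqrt{2})$. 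Since $8\mid p^2-1$ for every odd prime, $\mathbb{Q}(\zeta_8)\subset\mathbb{Q}(\zeta_{p^2-1})$, and so $E[4]\subset E(\mathbb{F}_{p^2})$. Writing $E(\mathbb{F}_{p^2})\cong\mathbb{Z}/c\times\mathbb{Z}/d$ with $c\mid d$ and $4\mid c$, and noting that $|E(\mathbb{F}_{p^2})|/32$ is odd, the only possibility is $(c,d)=(4,8M)$ with $M$ odd equal to $|E(\mathbb{F}_{p^2})|/32$.

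Finally I would match the two descriptions via the inclusion $E(\mathbb{F}_p)\hookrightarrow E(\mathbb{F}_{p^2})$: the odd part $\mathbb{Z}/m$ of $E(\mathbb{F}_p)$ embeds into the odd part $\mathbb{Z}/M$ of $E(\mathbb{F}_{p^2})$, forcing $m\mid M$ and thus $M=mn$ with $n=|E(\mathbb{F}_{p^2})|/(4|E(\mathbb{F}_p)|)$. The coprimality guaranteed by Theorem~\ref{theorem1_intro} yields $\gcd(m,n)=1$, hence $\mu(mn)\neq 0$, and the combined almost-prime bound gives $\omega(mn)=\omega(m)+\omega(n)\leq 10$; the lower bound $\gg x/(\log x)^3$ is inherited directly from Theorem~\ref{theorem1_intro}. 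The only non-routine step is the explicit verification that $E[4]$ lives in $\mathbb{Q}(\zeta_8)$—everything else is bookkeeping on the elementary divisors of a finite abelian group whose $2$-part is already forced.
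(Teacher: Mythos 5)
Your proposal is correct and follows essentially the same strategy as the paper's proof: both start from the coprime, squarefree version of Theorem~\ref{theorem1_intro}, pin down the $2$-primary components of $E(\mathbb{F}_p)$ and $E(\mathbb{F}_{p^2})$ via injection of torsion points defined over sufficiently small subfields of $\mathbb{Q}(\zeta_{p^r-1})$, and then read off the elementary divisors using the constraint $d \mid e$ and the squarefree odd part. There are two small variations worth noting. For $E(\mathbb{F}_p)$, the paper uses the injection $E(\mathbb{Q}(i))_{\mathrm{tors}} \cong \mathbb{Z}/2 \times \mathbb{Z}/4 \hookrightarrow E(\mathbb{F}_p)$, whereas you use only $E[2] \cong (\mathbb{Z}/2)^2$ together with the divisibility constraint on elementary divisors; both yield the same $2$-part $(2,4)$, and your route is slightly more elementary. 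For $E(\mathbb{F}_{p^2})$, the paper cites a Magma verification of $E(\mathbb{Q}(\zeta_8))_{\mathrm{tors}} \cong \mathbb{Z}/4 \times \mathbb{Z}/4$, whereas you derive $E[4] \subset E(\mathbb{Q}(\zeta_8))$ directly from the duplication formula. That computation is correct in substance, though stated a bit loosely: the fibre of $[2]$ over $(1,0)$ does lie in $\mathbb{Q}(\sqrt{2})$, but the fibre over $(-1,0)$ has $x$-coordinates $-1 \pm \sqrt{2}$ and $y$-coordinates involving $i$, so it is defined over $\mathbb{Q}(i,\sqrt{2}) = \mathbb{Q}(\zeta_8)$ rather than $\mathbb{Q}(\sqrt{2})$. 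This does not affect the conclusion, since the fibres over $(0,0)$ and $(1,0)$ together with $E[2]$ already generate all of $E[4]$. The remaining bookkeeping, including the odd-part matching via the inclusion $E(\mathbb{F}_p) \hookrightarrow E(\mathbb{F}_{p^2})$, is as in the paper.
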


	\begin{comment}
	\subsection*{Other Applications}
	\textcolor{red}{may modify this later}
	
	For instance, this approach could be used to study lower bounds for the number of splitting primes \( p \leq x \) such that
	\[
	\frac{|E(\mathbb{F}_p)|}{8} = 8P_{k_1}, \quad  
	\frac{|E(\mathbb{F}_{p^2})|}{|E(\mathbb{F}_p)|} = 4P_{k_2}, \quad  
	\frac{|E(\mathbb{F}_{p^l})|}{|E(\mathbb{F}_p)|} = P_{k_l} \quad \text{for any prime } l \leq N,
	\]
	with the condition that 
	\[
	k_1 + k_2 + \dots + k_{\pi(N)} \leq r,
	\]
	where the constant \( r \) is optimized through numerical computations for specific cases.
	\end{comment}
Theorem~\ref{theorem2_intro} gives a lower bound on the number of primes \( p \leq x \) for which both \( E(\mathbb{F}_p) \) and \( E(\mathbb{F}_{p^2}) \) contain a cyclic subgroup of finite index. More generally, for CM curves, a simplified sieve applied to the vector sequence \( (p - 1, p + 1) \) yields the following result.

\begin{theorem}\label{intro_cyclicity_lower_bound}
	Let \( E/\mathbb{Q} \) be an elliptic curve with complex multiplication and conductor \( N_E \). Then, for sufficiently large \( x \),
	\[
	\#\left\{ p \leq x : p \nmid N_E,\ E(\mathbb{F}_p) \text{ and } E(\mathbb{F}_{p^2}) \text{ contain a cyclic subgroup of index } \leq 12 \right\} \gg \frac{x}{(\log x)^3}.
	\]
\end{theorem}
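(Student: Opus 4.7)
The plan is to reduce the ``cyclic subgroup of index $\leq 12$'' condition to arithmetic constraints on $p - 1$ and $p + 1$, then apply a vector sieve to the pair sequence $(p - 1, p + 1)$. For a CM curve $E$ and an ordinary prime $p = \pi \bar{\pi}$ in $\mathcal{O}_K$, the $\mathcal{O}_K$-module isomorphism $E(\mathbb{F}_p) \cong \mathcal{O}_K/(\pi - 1)$ shows that the index of the largest cyclic subgroup of $E(\mathbb{F}_p)$ equals
\[
d_p = \max\{n \in \mathbb{Z}_{\geq 1} : n \mid \pi - 1 \text{ in } \mathcal{O}_K\}.
\]
Since rational integer divisors are preserved by conjugation, $n \mid \pi - 1$ forces $n \mid \bar{\pi} - 1$, and so $n^2 \mid (\pi - 1)(\bar{\pi} - 1) = |E(\mathbb{F}_p)|$ and $n \mid \pi \bar{\pi} - 1 = p - 1$. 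Consequently $d_p^2 \mid |E(\mathbb{F}_p)|$ and $d_p \mid p - 1$. The parallel analysis for $E(\mathbb{F}_{p^2}) \cong \mathcal{O}_K/(\pi^2 - 1)$ yields $D_p \mid p^2 - 1 = (p - 1)(p + 1)$ together with $D_p^2 \mid |E(\mathbb{F}_{p^2})|$; in particular every prime factor of $d_p$ divides $p - 1$, and every prime factor of $D_p$ divides $(p - 1)(p + 1)$.

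I would then apply a vector sieve to
\[
\mathcal{A}(x) = \{(p - 1,\, p + 1) : p \leq x,\, p \nmid N_E\},
\]
sieving each coordinate by primes in the range $(c_0, x^{1/r})$ for a fixed small constant $c_0$ (chosen around $12$) and a suitable exponent $r > 2$. This is a simpler application of the two-linear-sieve framework used to prove Theorem \ref{theorem1_intro}: here only a positive lower bound on the sieve sum is required, not the delicate almost-primality count of that theorem. The level-of-distribution input for primes in arithmetic progressions is the same unconditional Bombieri--Vinogradov-type estimate invoked earlier. The output is a lower bound $\gg x/(\log x)^3$ for the count of primes $p \leq x$ such that each of $p - 1$ and $p + 1$ has no prime factor in $(c_0, x^{1/r})$ and only a bounded number of prime factors exceeding $x^{1/r}$.

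For such $p$, every prime factor of $d_p$ is either $\leq c_0$ or $\geq x^{1/r}$. A large prime $q \geq x^{1/r}$ dividing $d_p$ would force $q^2 \mid |E(\mathbb{F}_p)| \leq 3x$, hence $q \leq \sqrt{3x}$; a Chebotarev-type density bound for splitting in $\mathbb{Q}(E[q])$ then shows that the number of sieved primes with such a large $q$ dividing $d_p$ is $o(x/(\log x)^3)$ once $r > 2$, and the analogous argument controls $D_p$ through $(p - 1)(p + 1)$. The main obstacle is the small-prime ($\ell \leq c_0$) bookkeeping: primes like $\ell = 2,3$ can occur with high multiplicity in $p \pm 1$, and the CM torsion constant $d_E$ always divides $d_p$. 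This is handled by imposing additional local sieve conditions bounding $v_\ell(p - 1)$ and $v_\ell(p + 1)$ for each $\ell \leq c_0$, with densities computable via Chebotarev on $\mathbb{Q}(E[\ell^k])/\mathbb{Q}$; a careful optimization of $c_0$, $r$, and these local exponent bounds then confirms that $d_p \leq 12$ and $D_p \leq 12$ can be arranged simultaneously.
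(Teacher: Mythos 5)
Your overall strategy matches the paper's: reduce to constraints on the pair \((p-1,p+1)\) via the divisibility \(d_p \mid p-1\), \(D_p \mid p^2-1\), apply a vector sieve to produce \(\gg x/(\log x)^3\) primes, control small primes by local conditions, and show the contribution from a large prime \(q\) dividing the index is negligible. The correct arithmetic observations are made. However, there are two genuine gaps in the step that handles large primes dividing the index of \(E(\mathbb{F}_{p^2})\).

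First, you invoke ``a Chebotarev-type density bound for splitting in \(\mathbb{Q}(E[q])\)'' for \(q\) ranging up to roughly \(\sqrt{x}\). Since \([\mathbb{Q}(E[q]):\mathbb{Q}] \asymp q^2\), the unconditional effective Chebotarev theorem gives nothing useful for \(q\) of this size, and even under GRH the error term \(\asymp q\, x^{1/2}\log x\) summed over \(q\in(x^{1/r}, x^{1/2}]\) would swamp the main term. This is precisely why the paper does \emph{not} use Chebotarev here: it exploits the CM structure directly. If \(q\mid D_p\), then (after CRT) \(\pi_p\) lies in a fixed residue class \(\alpha\bmod q\) in \(\mathcal{O}_K\), and the number of admissible \(\pi_p\) is bounded simply by the number of \(\beta\in\mathcal{O}_K\) with \(N(\beta)\ll x\) and \(q^2\mid N(\beta)\), which is \(\ll (x/q^2)\log x\) by elementary divisor counting. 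Summing over \(q > x^{1/8-\epsilon}\) gives \(\ll x^{7/8+\epsilon}\log x\), unconditionally. This counting argument is the essential technical device your proposal is missing.

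Second, you bound \(q \leq \sqrt{3x}\) for \(d_p\) from \(q^2\mid|E(\mathbb{F}_p)|\leq 3x\), but say only that ``the analogous argument controls \(D_p\) through \((p-1)(p+1)\).'' The analogue is \emph{not} straightforward: \(q^2\mid|E(\mathbb{F}_{p^2})|\leq 4x^2\) only gives \(q\leq 2x\), and with \(q\) that large the \(\mathcal{O}_K\)-counting bound no longer closes (the admissible \(\alpha\) has \(N(\alpha)\ll q^2\), so \(N(\pi_p-\alpha)\) can be as large as \(x^2\)). The paper closes this by observing that \(\ell^2 \mid N(\pi_p^2-1)\) together with \(\ell\mid p^2-1\) forces \(\ell\) to divide one of \(a_p\), \(a_p\pm 2\), whence \(\ell \leq 2\sqrt{p}+2\) by Hasse. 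Without this intermediate bound on \(\ell\), the sum over \(\ell\) does not converge to something \(o(x/(\log x)^3)\). Filling in these two steps, your argument essentially becomes the paper's proof; the small-prime bookkeeping via Chebotarev on \(\mathbb{Q}(E[\ell^k])\) is more elaborate than needed (the paper simply imposes \(p\equiv 5\pmod 8\) and \(p\equiv 5\pmod 9\)) but is not a gap.
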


For non-CM elliptic curves, we apply the effective Chebotarev density theorem under the \( \theta \)-quasi-GRH assumption (see p.~8), with \( \theta = \frac{1}{2} \). Similar results hold for any \( \theta = \frac{1}{2} + \epsilon \), \( 0 < \epsilon < \tfrac{1}{2} \), with adjusted sieve parameters. In Section~\ref{section_non-CM}, we establish the following analogue of Theorem~\ref{prop_intro}.

\begin{theorem}\label{intro_non-CM}
	Let \( E/\mathbb{Q} \) be a non-CM elliptic curve, and suppose \( c_E \neq 0 \) (see \eqref{def_C_E}). For any odd prime \(\ell\geq 3 \), define
	\[
	m_\ell = \left\lfloor 5.1 (\ell - 1) + 1.2 \right\rfloor, \quad 
	n_\ell = \left\lfloor 8.1 (\ell - 1) + 0.5 \right\rfloor.
	\]
	Assume GRH for all division fields \( \mathbb{Q}(E[n]) \). Then, for sufficiently large \( x \),
	\[
	\#\left\{ p \leq x : \omega\left( \frac{|E(\mathbb{F}_{p^\ell})|}{|E(\mathbb{F}_p)|} \right) \leq m_\ell \right\} \gg_E \frac{x}{(\log x)^2},
	\]
	and
	\[
	\#\left\{ p \leq x : \Omega\left( \frac{|E(\mathbb{F}_{p^\ell})|}{|E(\mathbb{F}_p)|} \right) \leq n_\ell \right\} \gg_E \frac{x}{(\log x)^2}.
	\]
	In particular, if \( E \) is a Serre curve, then \( c_E \neq 0 \), so the result holds for almost all non-CM curves.
\end{theorem}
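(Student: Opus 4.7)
The plan is to mirror the linear-sieve argument used for Theorem \ref{prop_intro} in the CM case, with the effective Chebotarev density theorem under GRH playing the role of the Bombieri--Vinogradov-type estimate for the Gaussian field. The sieve input is the sequence
\[
\mathcal{A}(x) := \left\{ \frac{|E(\mathbb{F}_{p^\ell})|}{|E(\mathbb{F}_p)|} : p \leq x,\; p \nmid N_E \right\},
\]
whose generic element has size about $p^{\ell-1}$, and the objective is to count $p \leq x$ for which the associated element has few prime factors.

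The first step is to translate divisibility by a prime $q$ into a Frobenius condition. From the factorization
\[
\frac{|E(\mathbb{F}_{p^\ell})|}{|E(\mathbb{F}_p)|} = \prod_{j=1}^{\ell-1}(\pi_p - \zeta_\ell^j)(\bar{\pi}_p - \zeta_\ell^j),
\]
divisibility by $q$ corresponds to $\rho_q(\mathrm{Frob}_p)$ having an eigenvalue that is a primitive $\ell$-th root of unity in $\overline{\mathbb{F}_q}$, where $\rho_q : G_\mathbb{Q} \to GL_2(\mathbb{F}_q)$ is the mod-$q$ Galois representation of $E$. This is a conjugacy-class condition in $\mathrm{Gal}(\mathbb{Q}(E[q], \zeta_\ell)/\mathbb{Q})$ of density $\asymp 1/q$ when $q \nmid \ell N_E$. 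Divisibility by a squarefree $d = q_1 \cdots q_r$ is then a Chebotarev condition in $\mathrm{Gal}(\mathbb{Q}(E[d], \zeta_\ell)/\mathbb{Q})$ whose density is multiplicative at primes coprime to $\ell N_E$; deviations at the finitely many bad primes are absorbed into the constant $c_E$ from \eqref{def_C_E}, whose nonvanishing ensures a positive main term.

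Next, under GRH for the Dedekind zeta functions of $L_d := \mathbb{Q}(E[d], \zeta_\ell)$, the effective Chebotarev density theorem yields an error term of order
\[
|C_d|^{1/2}\, x^{1/2}\bigl(\log|\Delta_{L_d}| + [L_d:\mathbb{Q}]\log x\bigr),
\]
with $[L_d:\mathbb{Q}] \ll d^4$ and $\log|\Delta_{L_d}| \ll [L_d:\mathbb{Q}]\log(d N_E)$. Summing over squarefree $d \leq D$ shows that $D = x^{\theta}$ is an admissible level of distribution for an explicit $\theta > 0$, which is approximately half of the level available in the CM case (consistent with the ratio $8.1/4.1 \approx 2$ between the constants in Theorem \ref{prop_intro} and those appearing here). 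With this level in hand, a weighted linear sieve applied in the style of Cojocaru \cite{cojocaru} yields $\Omega(\cdot) \leq n_\ell = \lfloor 8.1(\ell-1)+0.5\rfloor$ after optimizing the sieve weights. For the $\omega$-bound, one uses a Selberg-type upper bound on the count of $p \leq x$ whose associated element has no prime factor below $x^\alpha$; since such a surviving element is at most $x^{\ell-1}$ with all prime factors exceeding $x^\alpha$, one gets $\omega \leq (\ell-1)/\alpha$, and optimizing $\alpha$ against the level of distribution produces $m_\ell = \lfloor 5.1(\ell-1)+1.2\rfloor$.

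Finally, the Serre-curve claim follows from the fact that when $E$ is a Serre curve, $\mathrm{Gal}(\mathbb{Q}(E[n])/\mathbb{Q})$ equals (up to index $2$) the full image $GL_2(\hat{\mathbb{Z}})$, so the Euler product defining $c_E$ can be computed explicitly and shown to be nonzero; combined with N.~Jones's density-one result for Serre curves, this covers the last assertion. The main technical obstacle is tracking the level of distribution with the correct numerical constants: balancing $[L_d:\mathbb{Q}]$, $\log|\Delta_{L_d}|$, and $|C_d|$ in the Chebotarev error against the linear-sieve sifting limit, and then choosing the sieve weights to extract the precise values $5.1$ and $8.1$. Subsidiary difficulties include handling the bad primes $q \mid \ell N_E$, where the density deviates from $1/q$, and verifying the near-multiplicativity needed to feed the problem into the linear-sieve machinery.
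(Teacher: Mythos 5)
The skeleton you outline (Chebotarev condition, effective Chebotarev under GRH, linear sieve) is the right one, but two of your key technical steps would fail as written, and a third essential lemma is missing.

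\textbf{The naive Chebotarev error term is too crude.} You quote an error of shape \( |C_d|^{1/2} x^{1/2}\bigl(\log|\Delta_{L_d}| + [L_d:\mathbb{Q}]\log x\bigr) \) with \( [L_d:\mathbb{Q}] \ll d^4 \). Since \( |C_q| \asymp q^3 \), this gives a summand of size roughly \( d^{11/2} x^{1/2}\log(dx) \), which after summing over \( d \le D = x^\alpha \) forces \( \alpha < \frac{1}{13} \) — far below the level \( \alpha = \frac{1}{5} - \epsilon \) that the paper actually uses and that the numerical constants \( 5.1 \) and \( 8.1 \) require. The paper (Lemma~\ref{function_g}) applies the Murty--Murty--Saradha/Serre reduction: first pass to the Borel subgroup \( B_q \subset G_q \) via Theorem~\ref{theorem_ch1}, then to its unipotent radical \( U_q \) (normal in \( B_q \), with abelian quotient, so AHC holds) via Theorem~\ref{theorem_ch2}. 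This yields \( \pi_{C_q}(x,L_q/\mathbb{Q}) = \frac{|C_q|}{|G_q|}\operatorname{li}(x) + O_E(q^{3/2}x^\theta\log qx) \), and only then does summing over \( d \le D \) give the admissible level \( \alpha < \frac{2}{5}(1-\theta) \). Without this reduction the claimed constants are unreachable.

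\textbf{The \( \omega \)-bound argument is not what gives the stated constant.} A plain lower-bound linear sieve at sifting level \( z = x^\rho \) requires \( \alpha/\rho \ge 2 \) for \( f(\alpha/\rho) > 0 \), hence \( \rho \le \alpha/2 = \frac{1}{10} \) and only \( \omega \le 10(\ell-1) \); your proposed "element has no prime factor below \( x^\alpha \)" threshold cannot be sifted down to. Both bounds in the paper use the same Richert-weighted sieve \( S(x) = |S(\mathcal{A},z)| - \lambda\sum_{z\le p< y} w_p |S(\mathcal{A}_p,z)| \), which permits prime factors in \( [z,y) \) and gives \( \omega(a) \le 1/\lambda + (\ell-1)/\delta \) with \( \delta \) allowed up to the level \( \alpha \). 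Taking \( \delta = 1/5.1 \) gives \( m_\ell \); this is not a "Selberg-type upper bound." Your heuristic that \( 8.1/4.1 \approx 2 \) reflects a halved level of distribution is also incorrect — the non-CM level is \( 1/5 \) vs.\ \( 1/4 \) in the CM case, a ratio of \( 4/5 \).

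\textbf{The \( \Omega \)-bound requires a separate \( q^2 \)-divisibility estimate you omit.} To replace \( \omega \) by \( \Omega \), one must show the contribution from \( p \) with \( q^2 \mid \frac{|E(\mathbb{F}_{p^\ell})|}{|E(\mathbb{F}_p)|} \) for some \( z < q \le y \) is negligible. This is Lemma~\ref{lemma_condition_2} in the paper: a conjugacy-class condition \( C_{q^2} \subset G_{q^2} \) with \( |C_{q^2}| = O(q^6) \) is constructed, and the Chebotarev estimate \eqref{error_q^2} forces \( \delta < \frac{1}{8} \). That is what produces the constant \( 8.1 \) (taking \( \delta = 1/8.1 \)); it has nothing to do with the level of distribution. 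A final minor point: the paper works directly in \( G_q = \mathrm{Gal}(\mathbb{Q}(E[q])/\mathbb{Q}) \) rather than adjoining \( \zeta_\ell \), since the condition is expressible in terms of the characteristic polynomial mod \( q \); adjoining \( \zeta_\ell \) would only enlarge \( [L_q:\mathbb{Q}] \) and worsen the error.
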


\subsection{Comparisons in the CM and Non-CM Cases}\label{limitation}

In the CM case, where \( E \) has complex multiplication by an order \( \mathcal{O} \subset K \), we have \( \mathbb{Q}(\pi_p) \cong \mathcal{O} \otimes_\mathbb{Z} \mathbb{Q} \) for any ordinary prime \( p \). Since all such \( \pi_p \) lie in \( \mathcal{O}_K \), the ring of integers of \( K \), divisibility conditions on norms can be related  to divisibility by ideals in \( \mathcal{O}_K \), allowing one to apply a Bombieri–Vinogradov-type theorem over number fields to control error terms.

In contrast, in the non-CM case, the \( \pi_p \) vary over an infinite extension~\cite{infinite}, rather than lying in a fixed number field.   Divisibility conditions such as
\(q^n \mid |E(\mathbb{F}_p)| \) or \(q^n \mid \frac{|E(\mathbb{F}_{p^\ell})|}{|E(\mathbb{F}_p)|}\)
(for primes \( q \nmid pN_E \)) are interpreted via Chebotarev conditions. This approach applies for both CM and non-CM cases, but controlling the error requires the effective Chebotarev density theorem and thus   the  GRH.

\subsection{Notation}

We write \( \tau \), \( \mu \), and \( \varphi \) for the divisor function, the Möbius function, and Euler’s totient function, respectively. The function \( \Omega(n) \) counts the total number of prime divisors of \( n \) (with multiplicity), while \( \omega(n) \) counts the number of distinct prime divisors (without multiplicity).
 The constant \( \gamma \) denotes the Euler–Mascheroni constant. Primes are denoted by \( p, q \), and \( \ell\).

We write \( P_r \) for a product of at most \( r \) \textbf{distinct} prime factors, and \( P_r(z) \) for a product of at most \( r \) \textbf{distinct} primes, each at least \( z \).

Let \( B \subset \mathbb{N} \) and \( C \subset \mathbb{N} \times \mathbb{N} \) be multisets (elements may appear with multiplicity). For integers \( d, d_1, d_2 \), define
\[
B_d := \left\{ b \in B : d \mid b \right\}, \quad 
C_{d_1, d_2} := \left\{ (c_1, c_2) \in C : d_1 \mid c_1,\; d_2 \mid c_2 \right\},
\]
where multiplicities are preserved.

For \( z \in \mathbb{C} \), we write \( N(z) = z\overline{z} \) for its norm. If \( z \in K \), an imaginary quadratic field, this agrees with the field norm \( N_{K/\mathbb{Q}}(z) \). For a general extension \( L/K \), we write \( N_{L/K} \) for the field norm.

	\section{Sieving Tools}

	\subsection{The Bombieri--Vinogradov Theorem for Algebraic Number Fields}\label{section_bombierri_number_fields}
We now outline a Bombieri--Vinogradov type theorem that will be used in the CM case.  
Let \( K/\mathbb{Q} \) be an algebraic number field with ring of integers \( \mathcal{O}_K \). For an ideal \( I \subset \mathcal{O}_K \), define the \textit{generalized Euler function} by  
\begin{equation}\label{def_I}
	\Phi(I) = \left| \left(\mathcal{O}_K / I \right)^* \right|.
\end{equation}
Note that \( \Phi \) is multiplicative on ideals and satisfies  
\[
\Phi(\mathfrak{p}^k) = N_{K/\mathbb{Q}}(\mathfrak{p})^k \left( 1 - \frac{1}{N_{K/\mathbb{Q}}(\mathfrak{p})} \right),
\]
for any prime ideal \( \mathfrak{p} \) and integer \( k \geq 1 \); see, for example, \cite[Theorem~1.19]{Euler_formula_book}.

For an ideal \( I \subset \mathcal{O}_K \), define the \textit{generalized von Mangoldt function} \( \Lambda_K \) by  
\begin{equation}\label{def_Lamda}
	\Lambda_K(I) =
	\begin{cases}
		\log N_{K/\mathbb{Q}}(\mathfrak{p}), & \text{if } I = \mathfrak{p}^k \text{ for some prime ideal } \mathfrak{p},\, k \geq 1, \\
		0, & \text{otherwise}.
	\end{cases}
\end{equation}

We now state the following Bombieri--Vinogradov type theorem for number fields, due to Huxley \cite[Theorem~1]{huxley}.

	\begin{theorem}\cite[Theorem 1]{huxley}\label{theorem_Huxley}
	Let \( K/\mathbb{Q} \) be an algebraic number field of degree \( n \). For an ideal \( I \subset \mathcal{O}_K \), let \( h(I) \) denote the number of reduced narrow ideal classes modulo \( I \), and let \( \Phi_K(I) \) denote the generalized Euler function as defined in~\eqref{def_I}. For a positive number \( y \) and a reduced narrow ideal class \( H \) modulo \( I \), define
	\[
	\Psi(y, H) := \sum_{\substack{\mathfrak{p}^n \in H \\ N_{K/\mathbb{Q}}(\mathfrak{p}^n) \leq y}} \Lambda_K(\mathfrak{p}^n),
	\]
	where the sum is over prime ideal powers \( \mathfrak{p}^n \in H \) with \( N_{K/\mathbb{Q}}(\mathfrak{p}^n) \leq y \), and \( \Lambda_K(\mathfrak{p}^n) \) is the generalized von Mangoldt function as defined in~\eqref{def_Lamda}. Then for any \( A > 0 \), there exists \( B = B(A) > 0 \) such that
	\[
	\sum_{N_{K/\mathbb{Q}}(I) \leq \frac{x^{1/2}}{(\log x)^B}} \frac{h(I)}{\Phi_K(I)} 
	\max_H \max_{y \leq x} \left| \Psi(y, H) - \frac{y}{h(I)} \right| 
	\ll_{A,K} \frac{x}{(\log x)^A},
	\]
	where the first maximum is taken over reduced narrow ideal classes modulo \( I \).
	\end{theorem}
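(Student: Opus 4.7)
The plan is to transplant the classical Bombieri--Vinogradov argument from $\mathbb{Z}$ to the ring of integers $\mathcal{O}_K$, replacing Dirichlet characters modulo $q$ with narrow ray class characters modulo an integral ideal $I$. The output will be an average bound on the discrepancy $\Psi(y,H) - y/h(I)$ saving an arbitrary power of $\log x$ as $N_{K/\mathbb{Q}}(I)$ ranges up to $x^{1/2}/(\log x)^B$.

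The first step is orthogonality: averaging over the $h(I)$ narrow ray class characters $\chi \bmod I$ yields
\[
\Psi(y,H) - \frac{y}{h(I)} = \frac{1}{h(I)} \sum_{\chi \neq \chi_0} \overline{\chi}(H)\,\Psi(y,\chi) + O(\log x),
\]
where $\Psi(y,\chi) = \sum_{N\mathfrak{a} \le y,\,(\mathfrak{a},I)=1} \Lambda_K(\mathfrak{a})\,\chi(\mathfrak{a})$ and the error term absorbs prime powers of ideals dividing $I$. Next I would reduce each $\chi$ to the primitive character $\chi^{*}$ from which it is induced, bringing the target into the form
\[
\sum_{NI' \le Q} \frac{1}{\Phi_K(I')} \sideset{}{^{*}}\sum_{\chi \bmod I'} \max_{y \le x} |\Psi(y,\chi^{*})|, \qquad Q = \frac{x^{1/2}}{(\log x)^{B}}.
\]
I would then split the range of $NI'$ at a threshold $T = (\log x)^{C(A)}$. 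For $NI' \le T$, a Siegel--Walfisz estimate for Hecke $L$-functions of $K$ gives $|\Psi(y,\chi^{*})| \ll y \exp(-c\sqrt{\log y})$ individually (ineffectively, on account of a possible real exceptional zero), and these bounds sum over the short range to a manageable error. For $T < NI' \le Q$, I would apply a Vaughan-type identity to decompose $\Lambda_K(\mathfrak{a})$ into type-I and type-II bilinear sums over ideals, apply Cauchy--Schwarz, and invoke a large sieve inequality for primitive narrow ray class characters of the shape
\[
\sum_{NI' \le Q} \sideset{}{^{*}}\sum_{\chi \bmod I'} \left| \sum_{M < N\mathfrak{a} \le 2M} a(\mathfrak{a})\,\chi(\mathfrak{a}) \right|^{2} \ll_{K} (Q^{2} + M) \sum_{\mathfrak{a}} |a(\mathfrak{a})|^{2},
\]
balancing the bilinear ranges against $Q$ exactly as in the classical proof.

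The main obstacle is the large sieve inequality over $\mathcal{O}_K$: to obtain the clean $Q^{2} + M$ shape without spurious factors of the class number or the regulator, one must choose a fundamental domain of ideal representatives, carry the archimedean infinity-type of each Hecke character separately through the duality argument, and control a well-spaced system of characters using a Selberg--Gallagher style lemma. A secondary difficulty is uniformity with respect to a possible exceptional real character, which forces $B(A)$ to be ineffective via Siegel's argument but does not weaken the stated bound. Once these two ingredients are secured, the orthogonality step, the Vaughan decomposition, and the outer summation with weight $1/\Phi_K(I')$ transfer routinely from $\mathbb{Z}$ to $\mathcal{O}_K$, recovering the estimate exactly as stated.
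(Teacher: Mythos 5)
The paper does not prove this theorem; it simply imports it verbatim from Huxley's 1971 paper, whose very title (``The Large Sieve Inequality for Algebraic Number Fields III. Zero-Density Results'') reveals his route. Huxley proceeds via the explicit formula for $\Psi(y,\chi)$, combines the large sieve for Hecke characters with log-free zero-density estimates for the associated $L$-functions, and integrates over the zeros, exactly as in the Bombieri--Gallagher treatment of the classical theorem. Your proposal instead uses the Vaughan-identity decomposition of $\Lambda_K$ into type-I and type-II bilinear sums fed directly into the large sieve, bypassing zero-density estimates altogether; that approach postdates Huxley (Vaughan's identity appeared in 1975--77), so it is a genuinely different route, though one that is known to work over number fields. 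What Huxley's approach buys is that his zero-density estimates are of independent interest and yield additional information (e.g.\ Linnik-type results); what your approach buys is that it sidesteps the analytic machinery of zero densities, reducing the whole argument to the combinatorial identity, Cauchy--Schwarz, and a single large sieve input. Your outline is essentially correct: you have correctly flagged the two real obstructions, namely (i) establishing the large sieve inequality for primitive narrow ray class characters with the clean $Q^2 + M$ main term (which requires choosing ideal representatives in a fundamental domain and carrying the archimedean $\infty$-type through the duality argument), and (ii) the ineffectivity coming from a possible Siegel zero of a real Hecke character. One small omission: in the reduction to primitive characters you should also account for the discrepancy between $\Psi(y,\chi)$ and $\Psi(y,\chi^*)$, which contributes a harmless $O((\log y)\,\omega(I))$-type error from prime ideals dividing $I/I'$; and when passing from $h(I)/\Phi_K(I)$ to the $1/\Phi_K(I')$ weights on the primitive-character side you should note that $h(I)/\Phi_K(I)$ is essentially bounded, which is what makes the rearrangement painless.
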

We apply Theorem~\ref{theorem_Huxley} to the case where \( K/\mathbb{Q} \) is the CM field of a CM elliptic curve. Since all such fields \( K \) are imaginary quadratic fields of class number one, their rings of integers \( \mathcal{O}_K \) are principal ideal domains (PIDs).

\begin{prop}\label{Bombierri-number field_2}
	Let \( K/\mathbb{Q} \) be an imaginary quadratic field of class number 1, and let \( w_K := |\mathcal{O}_K^\times| \) denote the number of units in the ring of integers \( \mathcal{O}_K \) of \( K \). Then, for any \( A > 0 \), there exists \( B = B(A) > 0 \) such that  
	\begin{equation}\label{rhs}
		\sum_{4 < N_{K/\mathbb{Q}}(I) \leq \frac{x^{1/2}}{(\log x)^B}}  
		\max_{\alpha \in (\mathcal{O}_K/I)^\times} \max_{y \leq x} \left| \Pi(y; \alpha, I) - \frac{w_K \operatorname{li}(y)}{\Phi(I)} \right| 
		\ll_{A,K} \frac{x}{(\log x)^A},
	\end{equation}
	where \( \Pi(y; \alpha, I) \) is defined by
	\[
	\Pi(y; \alpha, I) := \#\left\{ (\pi) \subset \mathcal{O}_K \,\middle|\, \pi \equiv \alpha \pmod{I}, \, (\pi) \text{ is a prime ideal}, \, N_{K/\mathbb{Q}}(\pi) \leq y \right\}.
	\]
\end{prop}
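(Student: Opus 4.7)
The plan is to derive Proposition~\ref{Bombierri-number field_2} from Huxley's theorem (Theorem~\ref{theorem_Huxley}) in two main steps: first, identify the narrow ray class group modulo $I$ with cosets of units in $(\mathcal{O}_K/I)^\times$; second, apply a standard partial summation to convert between the weighted count $\Psi(y, H)$ and the unweighted count $\Pi(y; \alpha, I)$.

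For the first step, since $K$ is imaginary quadratic every archimedean place is complex, so the narrow ray class group modulo $I$ coincides with the ordinary ray class group. Combined with $Cl_K = 1$, the standard exact sequence
\[
\mathcal{O}_K^\times \longrightarrow (\mathcal{O}_K/I)^\times \longrightarrow Cl_I \longrightarrow Cl_K \longrightarrow 1
\]
gives $h(I) = \Phi(I)/|\mathrm{im}(\mathcal{O}_K^\times \to (\mathcal{O}_K/I)^\times)|$. I would verify by a short case check over the finitely many nontrivial units in the nine imaginary quadratic fields of class number $1$ that the restriction $N(I) > 4$ forces $u - 1 \notin I$ for every $u \in \mathcal{O}_K^\times \setminus \{1\}$, so the units inject and $h(I) = \Phi(I)/w_K$. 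Under this injectivity, each prime ideal $(\pi)$ in the narrow class $H$ corresponding to the coset $\alpha \cdot \mathcal{O}_K^\times$ has exactly one generator congruent to $\alpha$ modulo $I$; hence $\Pi(y; \alpha, I)$ equals the number of prime ideals of norm $\leq y$ in $H$, and in particular it depends only on the class, so replacing $\max_H$ by $\max_\alpha$ costs nothing.

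For the second step, write $\Theta(y, H) = \sum_{\mathfrak{p} \in H,\, N(\mathfrak{p}) \leq y} \log N(\mathfrak{p})$. Since prime powers $\mathfrak{p}^n$ with $n \geq 2$ contribute $O(y^{1/2} \log y)$ to $\Psi(y, H)$ uniformly in $H$ (via $\theta_K(y) = O(y)$), Abel summation yields
\[
\Pi(y; \alpha, I) = \frac{\Theta(y, H)}{\log y} + \int_2^y \frac{\Theta(t, H)}{t (\log t)^2}\, dt,
\]
and an analogous integration-by-parts identity turns the main term $y/h(I) = w_K y/\Phi(I)$ into $w_K \operatorname{li}(y)/\Phi(I)$. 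Subtracting gives
\[
\left| \Pi(y; \alpha, I) - \frac{w_K \operatorname{li}(y)}{\Phi(I)} \right| \ll \max_{t \leq y}\! \left| \Psi(t, H) - \frac{w_K t}{\Phi(I)} \right| + y^{1/2}.
\]

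Substituting $h(I)/\Phi(I) = 1/w_K$ into Theorem~\ref{theorem_Huxley} and absorbing $w_K$ into the implied constant yields
\[
\sum_{4 < N(I) \leq x^{1/2}/(\log x)^B} \max_H \max_{y \leq x} \left| \Psi(y, H) - \frac{w_K y}{\Phi(I)} \right| \ll_{A, K} \frac{x}{(\log x)^A},
\]
and the residual $y^{1/2}$ error, summed over the $O(x^{1/2}/(\log x)^B)$ ideals in the range, contributes $O(x/(\log x)^B)$, which is absorbed by taking $B$ sufficiently large in terms of $A$. This yields~\eqref{rhs}. The only real subtlety is the careful bookkeeping of the unit factor $w_K$ connecting $h(I)$, $\Phi(I)$, and the residue classes; once this identification is pinned down, the partial summation is routine.
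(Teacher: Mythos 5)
Your argument follows the same route as the paper: the exact sequence identifying $h(I) = \Phi(I)/w_K$ once $N(I) > 4$ (so the finitely many units inject into $(\mathcal{O}_K/I)^\times$ and narrow classes become residue classes of generators), followed by an application of Theorem~\ref{theorem_Huxley} with the weighted $\Psi(y,H)$ converted to the prime-ideal count $\Pi(y;\alpha,I)$ by partial summation. The paper simply cites Theorem~\ref{theorem_Huxley} after the ray-class identification and leaves the partial-summation conversion implicit; you spell it out correctly, and your $O(y^{1/2}\log y)$ bound on the prime-power tail is slightly generous (it is $O(y^{1/2})$) but entirely harmless.
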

	\begin{proof}
		For any ideal \( I \subset \mathcal{O}_K \), let \( K^{I} \) be the subgroup of elements \( \alpha \in K^\times \) such that \( (\alpha) \) is a fractional ideal coprime to \( I \). Let \( K^{I,1} \subset K^I \) be the subgroup of elements \( \alpha \in K^I \) such that \( I \mid (\alpha - 1) \). Let \( \mathrm{Cl}_K^I \) denote the ray class group modulo \( I \). Since the ideal class group $Cl_K$ of $K$ is trivial, we have the exact sequence
		\[
		1 \to \mathcal{O}_K^\times \cap K^{I,1} \to \mathcal{O}_K^\times \to K^{I}/K^{I,1} \cong (\mathcal{O}_K/I)^{\times} \to \mathrm{Cl}_K^I \to 1.
		\]
	Hence, we may identify a class  \( H_\alpha \in \mathrm{Cl}_K^I \) for some \( \alpha \in (\mathcal{O}_K/I)^\times \) as
	\[
	H_\alpha = \left\{ (\pi) \subset \mathcal{O}_K \,\middle|\, \pi \equiv \alpha \pmod{I} \right\},
	\]
	and we have
	\[
	h(I) = \frac{|(\mathcal{O}_K/I)^\times|}{|\mathcal{O}_K^\times : \mathcal{O}_K^\times \cap K^{I,1}|} = \frac{\Phi(I)}{|\mathcal{O}_K^\times : \mathcal{O}_K^\times \cap K^{I,1}|}.
	\]
	In particular, if \( I \nmid (u - 1) \) for any unit \( u \neq 1 \), then \( \mathcal{O}_K^\times \cap K^{I,1} = \{1\} \), and we have
	\[
	h(I) = \frac{\Phi(I)}{|\mathcal{O}_K^\times|}.
	\]		
For imaginary quadratic fields \( K \), the unit group \( \mathcal{O}_K^\times \) is finite. Therefore, there exists a small integer \( N_K \) such that for any ideal \( I \) with \( N_{\mathbb{Q}/K}(I) > N_K \), we have \( \mathcal{O}_K^\times \cap K^{I,1} = \{1\} \). A direct check shows that \( N_K = 4 \) suffices for all such fields \( K \). Applying Theorem~\ref{theorem_Huxley} then completes the proof.
	\end{proof}

\begin{rem}
	Proposition~\ref{Bombierri-number field_2} can also be deduced from the large sieve inequality for imaginary quadratic fields established by Johnson; see \cite[Corollary]{Johnson}.
\end{rem}

For any algebraic number field, only finitely many primes are ramified. Moreover, in an imaginary quadratic field, inert primes \( q \) have norms \( q^2 \), so their contribution to \eqref{rhs} is expected to be negligible. Thus, in the definition of \( \Pi(y; \alpha, I) \), we may restrict \( \pi \) to be a splitting prime, as we will show in the following proposition.

	\begin{prop}\label{Bombierri-number field_3}
		Let \( K/\mathbb{Q} \) be an imaginary quadratic field of class number 1. Then, for any \( A > 0 \), there exists \( B = B(A) > 0 \) such that  
		\[
		\sum_{4 < N_{K/\mathbb{Q}}(I) \leq \frac{x^{1/2}}{(\log x)^B} }  
		\max_{\alpha \in (\mathcal{O}_K/I)^\times} \max_{y \leq x} \left| \Pi'(y; \alpha, I) - \frac{w_K \operatorname{li} \, y}{\Phi(I)} \right| 
		\ll_{A,K} \frac{x}{(\log x)^A},
		\]
		where \( \Pi'(y; \alpha, I) \) is defined by 
		\[
		\Pi'(y; \alpha, I) := \#\{ (\pi) \subset \mathcal{O}_K  \,\mid\, \pi \equiv \alpha \pmod{I}, \,   N_{K/\mathbb{Q}}(\pi) =p\leq y \}.
		\]
	\end{prop}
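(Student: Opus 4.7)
The plan is to deduce Proposition \ref{Bombierri-number field_3} directly from Proposition \ref{Bombierri-number field_2} by showing that the discrepancy $\Pi - \Pi'$ contributes negligibly to the sum. The starting observation is to partition the prime ideals of $\mathcal{O}_K$ according to the splitting behavior of the rational prime beneath them: split (these satisfy $N_{K/\mathbb{Q}}(\pi) = p$ and are exactly the ones counted by $\Pi'$), inert (these satisfy $(\pi) = (p)$ for an inert rational prime $p$, so $N_{K/\mathbb{Q}}(\pi) = p^2$), and ramified (only finitely many, depending on the discriminant of $K$). Consequently, $\Pi(y;\alpha,I) - \Pi'(y;\alpha,I)$ counts only inert and ramified prime ideals with some generator congruent to $\alpha \pmod I$ and norm at most $y$.

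Since an inert prime ideal $(\pi) = (p)$ with $N_{K/\mathbb{Q}}(\pi) = p^2 \leq y \leq x$ forces $p \leq \sqrt{x}$, the number of such ideals (in any residue class and for any $y \leq x$) is trivially bounded by $\pi(\sqrt{x}) \ll \sqrt{x}/\log x$, while the ramified contribution is $O_K(1)$. Therefore, uniformly in $\alpha \in (\mathcal{O}_K/I)^\times$ and $y \leq x$,
\[
|\Pi(y;\alpha,I) - \Pi'(y;\alpha,I)| \ll_K \frac{\sqrt{x}}{\log x}.
\]
Summing over ideals with $N_{K/\mathbb{Q}}(I) \leq x^{1/2}/(\log x)^B$, and invoking the standard estimate $\#\{I \subset \mathcal{O}_K : N_{K/\mathbb{Q}}(I) \leq X\} \ll_K X$ (a consequence of the analytic class number formula, or simple lattice counting in $\mathcal{O}_K$), we obtain
\[
\sum_{4 < N_{K/\mathbb{Q}}(I) \leq x^{1/2}/(\log x)^B} \max_\alpha \max_{y \leq x} \bigl|\Pi(y;\alpha,I) - \Pi'(y;\alpha,I)\bigr| \ll_K \frac{x^{1/2}}{(\log x)^B} \cdot \frac{\sqrt{x}}{\log x} = \frac{x}{(\log x)^{B+1}}.
\]

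The proof then concludes by the triangle inequality: given a target exponent $A > 0$, I would take $B$ to be the maximum of the value $B(A)$ furnished by Proposition \ref{Bombierri-number field_2} and the number $A - 1$. Enlarging $B$ only shrinks the left-hand side of Proposition \ref{Bombierri-number field_2} (fewer ideals appear in the sum), so that inequality still holds with the enlarged $B$, and both contributions are now $\ll_{A,K} x/(\log x)^A$. No serious obstacle is anticipated: the essential point is that the quadratic growth of inert norms confines their total contribution across all residue classes to $O(\sqrt{x}/\log x)$, which is comfortably absorbed once weighted against the polynomial length $x^{1/2}/(\log x)^B$ of the ideal sum.
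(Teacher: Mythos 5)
Your proof is correct, and it is genuinely simpler than the paper's. Both routes start from the same reduction: $\Pi - \Pi'$ counts only inert (degree-$2$) and ramified prime ideals. But where the paper extracts a congruence on the rational prime $q$ from the condition $q \equiv \alpha \pmod I$ — translating it into $x^2 \equiv c \pmod{N(I)}$ and thereby obtaining a per-ideal bound of order $2^{\omega(N(I))}\operatorname{li}(x^{1/2})/\varphi(N(I))$, which after summing yields the sharp estimate $O(x^{1/2}\log^3 x)$ — you discard the residue condition entirely and use the crude bound $\pi(\sqrt x) \ll \sqrt x/\log x$ uniformly in $\alpha$ and $I$. Multiplying by the $\ll x^{1/2}/(\log x)^B$ ideals in the range gives $x/(\log x)^{B+1}$, which you then absorb by taking $B \ge \max(B(A), A-1)$, correctly noting that enlarging $B$ only shortens the sum in Proposition~\ref{Bombierri-number field_2}. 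The trade-off is that the paper's argument controls the discrepancy to power-saving accuracy, while yours only saves an extra log — but since $B$ is a free parameter, the weaker bound costs nothing here, and your version avoids the case analysis on $D \bmod 4$ and the counting of square roots mod $N(I)$ needed in the paper's proof.
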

	\begin{proof}
Let \( K = \mathbb{Q}(\sqrt{D}) \), and let \( \mathcal{O}_K = \mathbb{Z}[\omega_D] \), where \( \omega_D = \sqrt{D} \) if \( D \equiv 2,3 \pmod{4} \), and \( \omega_D = \frac{1 + \sqrt{D}}{2} \) if \( D \equiv 1 \pmod{4} \).
 
Let \( q \) be a rational prime that is inert in \( \mathcal{O}_K \) and satisfies \( q \equiv \alpha \pmod{I} \), where \( \alpha = a + b\omega_D \) with \( a, b \in \mathbb{Z} \), and suppose that \( N_{K/\mathbb{Q}}(q) = q^2 \leq x \). Then we have the following   integer divisibility conditions: 
 If \( D \equiv 2,3 \pmod{4} \), then
\[
N_{K/\mathbb{Q}}(I) \mid (q - a)^2 + b^2 |D|;
\]
If \( D \equiv 1 \pmod{4} \), then
\[
N_{K/\mathbb{Q}}(I) \mid \left(q - a - \frac{b}{2}\right)^2 + \frac{b^2 |D|}{4},
\]
which implies
\[
N_{K/\mathbb{Q}}(I) \mid \left(2q - 2a - b\right)^2 + b^2 |D|.
\]
For a fixed integer \( c \), the number of solutions to the congruence \( x^2 \equiv c \pmod{N_{K/\mathbb{Q}}(I)} \) is at most \( 2^{\omega(N_{K/\mathbb{Q}}(I)) + 1} \), and each such solution corresponds to at most two congruence classes for \( q \bmod N_{K/\mathbb{Q}}(I) \). Therefore, the number of such primes \( q \in \mathcal{O}_K \) satisfying \( q \equiv \alpha \pmod{I} \) and \( N_{K/\mathbb{Q}}(q) = q^2 \leq x \) is bounded by
\[
\ll \frac{2^{\omega(N_{K/\mathbb{Q}}(I))} \cdot \operatorname{li}(x^{1/2})}{\varphi(N_{K/\mathbb{Q}}(I))}.
\]

	Summing over ideals \( I \) with \( N_{K/\mathbb{Q}}(I) < x^{1/2} \), and noting that the number of such \( I \) with norm \( d \) is \( \leq \tau(d) \), the total contribution to the right-hand side of \eqref{rhs} from inert primes is bounded by 
	\[
	\ll \sum_{d < x^{1/2}} \frac{\tau(d) \cdot 2^{\omega(d)}}{\varphi(d)} \cdot \operatorname{li}(x^{1/2}) 
	\ll \frac{x^{1/2}}{\log x} \sum_{d < x^{1/2}} \frac{\tau^2(d)}{\varphi(d)} 
	\ll x^{1/2} \log^3 x.
	\]
	
	Combining this with Proposition~\ref{Bombierri-number field_2} completes the proof.
	\end{proof}
	\subsection{The Effective Version of the Chebotarev Density Theorem}\label{chebotarev}
	
Here, we outline the version of the Chebotarev density theorem that will be applied in the non-CM case.

In \cite{steuding}, the effective Chebotarev density theorem of Murty, Murty, and Saradha \cite[Proposition~3.12]{chebotarev}, together with a reduction method due to Serre \cite[Propositions~7 and~8]{serre}, was used under the assumption of the Generalized Riemann Hypothesis (GRH) for Artin \( L \)-functions. 
Alternatively, one can obtain a similar estimate under a weaker assumption by starting the derivation from the effective Chebotarev density theorem of Lagarias and Odlyzko \cite{unconditional_chabotarev} under a milder hypothesis.

Let \( K/\mathbb{Q} \) be a number field, and let \( L/K \) be a Galois extension of number fields with Galois group \( G := \operatorname{Gal}(L/K) \). Denote by \( n_K = [K:\mathbb{Q}] \) and \( n_L = [L:\mathbb{Q}] \), and let \( d_K \) and \( d_L \) be the absolute discriminants of \( K \) and \( L \), respectively.

Let \( \mathfrak{p} \) be a prime ideal of \( K \), and let \( \mathfrak{q} \) be a prime ideal of \( L \) lying above \( \mathfrak{p} \). Denote by \( \mathbb{F}_{\mathfrak{p}} \) and \( \mathbb{F}_{\mathfrak{q}} \) the corresponding residue fields. Let \( D_{\mathfrak{q}} \) and \( I_{\mathfrak{q}} \) be the decomposition and inertia subgroups of \( G \) at \( \mathfrak{q} \). Then
\[
D_{\mathfrak{q}} / I_{\mathfrak{q}} \cong \operatorname{Gal}(\mathbb{F}_{\mathfrak{q}} / \mathbb{F}_{\mathfrak{p}}),
\]
and this Galois group is cyclic, generated by the Frobenius automorphism at \( \mathfrak{p} \), denoted \( \sigma_{\mathfrak{p}} \), which acts via \( x \mapsto x^{|\mathbb{F}_{\mathfrak{p}}|} \).
 If \( \mathfrak{q} \) is unramified in \( L \), then \( I_{\mathfrak{q}} \) is trivial, and the Frobenius automorphism \( \sigma_{\mathfrak{p}} \) can be uniquely identified, up to conjugacy in \( G = \operatorname{Gal}(L/K) \), as the Artin symbol \( \left( \dfrac{L/K}{\mathfrak{p}} \right) \). 
 
 Let \( \varphi \) be a \emph{class function} on \( G \), that is, a function invariant under conjugation. For example, \( \varphi \) could be the characteristic function of a union of conjugacy classes.
  Define
\[
\pi_\varphi(x, L/K) := \sum_{\substack{N_{K/\mathbb{Q}}(\mathfrak{p}) \leq x \\ \mathfrak{p} \text{ unramified in } L/K}} \varphi\left( \left( \frac{L/K}{\mathfrak{p}} \right) \right),
\]
where \( \left( \frac{L/K}{\mathfrak{p}} \right) \) denotes the Artin symbol.

We now define a variation \( \widetilde{\pi}_\varphi(x, L/K) \), which requires extending the definition of \( \varphi \) to possibly ramified primes:
\[
\widetilde{\pi}_\varphi(x, L/K) := \sum_{\substack{N_{K/\mathbb{Q}}(\mathfrak{p}) \leq x}} \frac{1}{m} \sum_{\substack{g \in D_{\mathfrak{q}} \\ g I_{\mathfrak{q}} = \sigma_{\mathfrak{p}}^m}} \varphi(g),
\]
where the inner sum is over all elements \( g \in D_{\mathfrak{q}} \) whose image in the quotient \( D_{\mathfrak{q}} / I_{\mathfrak{q}} \cong \operatorname{Gal}(\mathbb{F}_{\mathfrak{q}} / \mathbb{F}_{\mathfrak{p}}) \) equals \( \sigma_{\mathfrak{p}}^m \), the \( m \)-th power of the Frobenius automorphism at \( \mathfrak{p} \).

\begin{prop}[{\cite[Proposition 7]{serre}}]\label{prop_serre_7}
	\[
	\widetilde{\pi}_\varphi(x, L/K) - \pi_\varphi(x, L/K) = 
	O\left( 
	\sup_{g \in G} |\varphi(g)| \left( \frac{1}{|G|} \log d_L + n_K x^{1/2} \right) 
	\right).
	\]
\end{prop}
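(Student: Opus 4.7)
The plan is to decompose $\widetilde{\pi}_\varphi(x,L/K) - \pi_\varphi(x,L/K)$ according to the two ways the augmented count $\widetilde{\pi}_\varphi$ differs from $\pi_\varphi$: first, by including ramified prime ideals $\mathfrak{p}$ of $K$ (the $m=1$ terms at places where $I_\mathfrak{q}$ is nontrivial); and second, by including the prime-power contributions indexed by $m \geq 2$. After this splitting, each piece will be bounded by the trivial estimate $|\varphi(g)| \leq \sup_{h \in G}|\varphi(h)|$ combined with an appropriate counting estimate for the relevant primes.

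For the prime-power terms, each summand is bounded in absolute value by $\tfrac{1}{m}\sup_g|\varphi(g)|$, and for fixed $m \geq 2$ the outer index runs over prime ideals $\mathfrak{p}$ of $K$ with $N_{K/\mathbb{Q}}(\mathfrak{p})^m \leq x$. The elementary bound $\#\{\mathfrak{p}: N_{K/\mathbb{Q}}(\mathfrak{p}) \leq y\} \leq n_K\,\pi(y) \ll n_K y/\log y$ (each rational prime has at most $n_K$ primes of $K$ above it) produces a contribution $\ll n_K x^{1/m}/\log x$ at level $m$. Summing in $m \geq 2$ gives a series dominated by the $m=2$ term, so the total prime-power contribution is $O(n_K x^{1/2}\sup_g|\varphi(g)|)$, matching the second summand in the claimed bound.

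For the ramified primes, the problem reduces to bounding the number of prime ideals of $K$ that ramify in $L$. I would invoke the tower formula for the absolute discriminant,
\[
\log|d_L| = [L:K]\log|d_K| + \log N_{K/\mathbb{Q}}(\mathfrak{d}_{L/K}),
\]
to get $\log N_{K/\mathbb{Q}}(\mathfrak{d}_{L/K}) \leq \log d_L$. For each ramified prime $\mathfrak{p}$, the standard lower bound $v_{\mathfrak{q}}(\mathfrak{D}_{L/K}) \geq e_{\mathfrak{q}}-1 \geq 1$ on the different at a ramified $\mathfrak{q}\mid\mathfrak{p}$, together with the Galois identity $\sum_{\mathfrak{q}\mid\mathfrak{p}}e_{\mathfrak{q}}f_{\mathfrak{q}}=|G|$, shows that $\mathfrak{p}$ contributes at least $(|G|/2)\log N_{K/\mathbb{Q}}(\mathfrak{p}) \geq (|G|/2)\log 2$ to $\log N_{K/\mathbb{Q}}(\mathfrak{d}_{L/K})$. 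Hence the number of ramified primes is at most $O(|G|^{-1}\log d_L)$, producing the first summand in the claimed bound.

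The main technical obstacle is this ramification estimate, which hinges on an accurate accounting of the exponents of the different; the prime-power bound, by contrast, is routine once the sum is organized by $m$. Combining the two error bounds and factoring out the common factor $\sup_g|\varphi(g)|$ yields the statement of the proposition.
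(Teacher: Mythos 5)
Your proof is correct and, since the paper only cites Serre's Proposition 7 without reproducing its proof, the right benchmark is Serre's original argument --- which this matches. The decomposition into (a) ramified $m=1$ terms, controlled by the different--discriminant bound $v_\mathfrak{q}(\mathfrak{D}_{L/K}) \geq e_\mathfrak{q}-1$ combined with $e_\mathfrak{q} f_\mathfrak{q} g_\mathfrak{p} = |G|$ and the conductor--discriminant tower formula, and (b) the prime-power terms $m \geq 2$, controlled by the trivial count $\#\{\mathfrak{p} : N\mathfrak{p} \leq y\} \leq n_K\,\pi(y)$ summed over $2 \leq m \ll \log x$, is exactly Serre's. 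One small point worth making explicit (it is implicit in your ``contributes at least $(|G|/2)\log N\mathfrak{p}$''): in passing from $v_\mathfrak{q}(\mathfrak{D}_{L/K}) \geq e_\mathfrak{q}-1$ to the exponent of $\mathfrak{p}$ in the relative discriminant $\mathfrak{d}_{L/K}$, you need the factor $f_\mathfrak{q}$ from the norm, giving $\sum_{\mathfrak{q}\mid\mathfrak{p}} f_\mathfrak{q}(e_\mathfrak{q}-1) \geq g_\mathfrak{p} f_\mathfrak{q} e_\mathfrak{q}/2 = |G|/2$ using $e_\mathfrak{q} \geq 2$; and in using the conclusion you should also note that the ramified term at each such $\mathfrak{p}$ is a genuine average over the inertia coset (normalized by $|I_\mathfrak{q}|$, which the paper's transcription of $\widetilde\pi_\varphi$ omits), so it is indeed bounded by $\sup_g|\varphi(g)|$.
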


\begin{prop}[{\cite[Proposition 8]{serre}}]\label{prop_serre_8}
	(a) Let \( H \) be a subgroup of \( G \), \( \varphi_H \) a class function on \( H \), and let \( \varphi_G = \operatorname{Ind}_H^G \varphi_H \) be the class function on \( G \) induced from \( \varphi_H \). Then
		\[
		\widetilde{\pi}_{\varphi_G}(x, L/K) = \widetilde{\pi}_{\varphi_H}(x, L/L^H), \quad \text{for all } x \geq 2.
		\]
		
		(b) Let \( N \) be a normal subgroup of \( G \), \( \varphi_{G/N} \) a class function on \( G/N \), and let \( \varphi_G \) be the class function on \( G \) obtained by composing \( \varphi_{G/N} \) with the natural surjection \( G \to G/N \). Then
		\[
		\widetilde{\pi}_{\varphi_G}(x, L/K) = \widetilde{\pi}_{\varphi_{G/N}}(x, L^N/K), \quad \text{for all } x \geq 2.
		\]

\end{prop}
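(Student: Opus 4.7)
Both parts are functoriality identities for $\widetilde{\pi}_\varphi$ and should be proved by expanding the definition of $\widetilde{\pi}$ and matching terms using the standard behaviour of decomposition and inertia groups in towers. No analytic input is needed; the identities hold prime by prime in $K$, so the global statement follows by summing over primes $\mathfrak{p}$ of $K$ with $N_{K/\mathbb{Q}}(\mathfrak{p})^m \leq x$. The backbone of part (a) is the induced-character formula together with the double-coset description of primes in an intermediate field; for part (b) it is the compatibility of $D_\mathfrak{q}, I_\mathfrak{q}$ and $\sigma_\mathfrak{p}$ with the projection $G \twoheadrightarrow G/N$.

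For part (a), I would begin by substituting the induced-character formula
$$\varphi_G(g) \;=\; \frac{1}{|H|}\sum_{\substack{y \in G \\ y^{-1}gy \in H}} \varphi_H\bigl(y^{-1}gy\bigr)$$
into the definition of $\widetilde{\pi}_{\varphi_G}(x, L/K)$. After fixing, for each prime $\mathfrak{p}$ of $K$, a prime $\mathfrak{q}$ of $L$ above $\mathfrak{p}$ with decomposition and inertia groups $D_\mathfrak{q}, I_\mathfrak{q}$, I would swap the orders of summation. The outer sum over $\mathfrak{p}$ combined with the coset sum over $y \in H\backslash G$ then reorganises into a sum over primes $\mathfrak{P}$ of $L^H$ above $\mathfrak{p}$, using the classical bijection between these primes and the double cosets in $H\backslash G/D_\mathfrak{q}$. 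The key identities to check are that for such a $\mathfrak{P}$ one has $D_\mathfrak{P}^{L/L^H} = H \cap yD_\mathfrak{q}y^{-1}$ and $I_\mathfrak{P}^{L/L^H} = H \cap yI_\mathfrak{q}y^{-1}$, and that the Frobenius at $\mathfrak{P}$ corresponds to $y\sigma_\mathfrak{p}^{f(\mathfrak{P}/\mathfrak{p})}y^{-1}$. Once these are in place, the bound $N_{K/\mathbb{Q}}(\mathfrak{p})^m \leq x$ together with the residue degree $f(\mathfrak{P}/\mathfrak{p})$ converts into $N_{L^H/\mathbb{Q}}(\mathfrak{P})^{m'} \leq x$ for the appropriate $m'$, and collecting terms reproduces the definition of $\widetilde{\pi}_{\varphi_H}(x, L/L^H)$.

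Part (b) is structurally simpler because the base field is unchanged. The projection $\pi\colon G \twoheadrightarrow G/N$ identifies each prime $\mathfrak{p}$ of $K$ in both towers; for a chosen $\mathfrak{q}$ of $L$ above $\mathfrak{p}$, its restriction $\mathfrak{q}' := \mathfrak{q}\cap L^N$ satisfies $D_{\mathfrak{q}'} = \pi(D_\mathfrak{q})$, $I_{\mathfrak{q}'} = \pi(I_\mathfrak{q})$, and $\sigma_\mathfrak{p}^{L^N/K} = \pi\bigl(\sigma_\mathfrak{p}^{L/K}\bigr)$. Since $\varphi_G = \varphi_{G/N}\circ \pi$ is constant on fibres of $\pi$, the inner sum for $L/K$ factors through these fibres. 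The multiplicities $|D_\mathfrak{q}\cap N|$ and $|I_\mathfrak{q}\cap N|$ then cancel against the change from $|I_\mathfrak{q}|$ to $|I_{\mathfrak{q}'}|$ implicit in the condition $gI_\mathfrak{q} = \sigma_\mathfrak{p}^m$, and the expression collapses to the inner sum for $L^N/K$. Summing over $\mathfrak{p}$ with $N_{K/\mathbb{Q}}(\mathfrak{p})^m \leq x$ gives the right-hand side.

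The main obstacle I anticipate is purely combinatorial bookkeeping in part (a): choosing a consistent set of representatives for $H\backslash G/D_\mathfrak{q}$, verifying that every multiplicity cancels with no double-counting, and confirming that the norm condition in $K$ transforms cleanly into the norm condition in $L^H$ via $N_{L^H/\mathbb{Q}}(\mathfrak{P}) = N_{K/\mathbb{Q}}(\mathfrak{p})^{f(\mathfrak{P}/\mathfrak{p})}$. Every ingredient is standard, but the precise way in which the factors $|H|$, $|D_\mathfrak{q}|$, $|I_\mathfrak{q}|$ and $|H\cap yD_\mathfrak{q}y^{-1}|$ interact is delicate, so I would first carry out the argument for a single prime $\mathfrak{p}$ to extract the identity at that prime before summing.
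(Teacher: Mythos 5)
The paper states this proposition as a direct citation to Serre's ``Quelques applications du théorème de densité de Chebotarev'' (Proposition~8) and does not reproduce a proof, so there is no internal argument to compare your proposal against. Your outline is nonetheless a sound elementary route to the result, and it is faithful to the prime-by-prime nature of the identity. The statement is in effect the Euler-factor version of the inductivity (part (a)) and inflation (part (b)) of Artin $L$-functions, so it must hold at each prime $\mathfrak{p}$ of $K$ separately; your plan of fixing $\mathfrak{p}$, establishing the identity there, and then summing over $N_{K/\mathbb{Q}}(\mathfrak{p})^m \leq x$ is exactly the right shape.

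Your ingredients for part (a) --- the formula $\varphi_G(g) = \frac{1}{|H|}\sum_{y:\, y^{-1}gy\in H}\varphi_H(y^{-1}gy)$, the bijection between primes $\mathfrak{P}$ of $L^H$ above $\mathfrak{p}$ and double cosets in $H\backslash G/D_\mathfrak{q}$, the identities $D^{L/L^H}_\mathfrak{P} = H\cap yD_\mathfrak{q}y^{-1}$ and $I^{L/L^H}_\mathfrak{P} = H\cap yI_\mathfrak{q}y^{-1}$, and the norm relation $N_{L^H/\mathbb{Q}}(\mathfrak{P}) = N_{K/\mathbb{Q}}(\mathfrak{p})^{f(\mathfrak{P}/\mathfrak{p})}$ --- are all correct and are precisely what one needs. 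Likewise, for part (b), the facts $D_{\mathfrak{q}'} = \pi(D_\mathfrak{q})$, $I_{\mathfrak{q}'} = \pi(I_\mathfrak{q})$, $\sigma^{L^N/K}_\mathfrak{p} = \pi(\sigma^{L/K}_\mathfrak{p})$ hold and the cancellation you describe comes down to $\pi$ restricting to a surjection $I_\mathfrak{q}\twoheadrightarrow I_{\mathfrak{q}'}$ with kernel $I_\mathfrak{q}\cap N$, so the multiplicity $|I_\mathfrak{q}\cap N|$ is absorbed by the $1/|I_\mathfrak{q}|$ normalization (which, incidentally, should appear in the definition of $\widetilde{\pi}_\varphi$ --- the paper's displayed formula appears to have dropped both that factor and the sum over $m$; you are right to work from Serre's standard definition). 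The ``delicate bookkeeping'' you flag for part (a) is the genuine difficulty: the counts $|H|$, $|I_\mathfrak{q}|$, $|H\cap yI_\mathfrak{q}y^{-1}|$, and the size of each double coset must all cancel cleanly, and one must check that the residue degree $f(\mathfrak{P}/\mathfrak{p})$ correctly rescales the running index $m$. If you carry that bookkeeping to completion, the proof closes.
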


Let \( C \subset G \) be a union of conjugacy classes, and let \( \varphi_C: G \to \{0,1\} \) be the characteristic function of \( C \). Define
\[
\pi_C(x, L/K) := \pi_{\varphi_C}(x, L/K) = \# \left\{ \mathfrak{p} \subset K \mid \mathfrak{p} \text{ unramified in } L,\ \sigma_{\mathfrak{p}} \in C,\ N_{K/\mathbb{Q}}(\mathfrak{p}) \leq x \right\}
\]
as the number of prime ideals \( \mathfrak{p} \) of \( K \) that are unramified in \( L \), have Frobenius class \( \sigma_{\mathfrak{p}} \in C \), and norm \( N_{K/\mathbb{Q}}(\mathfrak{p}) \leq x \).

	Define
	\[
	M(L/K) = [L:K] d_K^{1/n_K} \prod_{p \in P(L/K)} p,
	\]
where \( P(L/K) \) denotes the set of rational primes lying below the primes in \( K \) that ramify in \( L \).

	The following result is a variation of a theorem from Murty, Murty, and Saradha \cite[Proposition 3.9]{chebotarev}, with the specific formulation used here given in David and Wu \cite[Theorem 3.7]{wu}.
	\begin{theorem}[{\cite[Proposition 3.9]{chebotarev}, \cite[Theorem 3.7]{wu}}]\label{theorem_ch1}
		Suppose that \( L/K \) is a Galois extension of number fields with Galois group \( G \). Let \( H \) be a subgroup of \( G \) and \( C \) be a union of  conjugacy classes in \( G \) such that \( C \cap H \neq \emptyset \).  
		
		Let \( C_H \) be the union of conjugacy classes in \( H \) generated by \( C \cap H \). Then, we have
		\[
		\pi_C(x, L/K) = \frac{|H|}{|G|} \frac{|C|}{|C_H|} \pi_{C_H}(x, L/L^H) 
		+ O \left( \frac{|C|}{|C_H| |G|} \log d_L 
		+ \frac{|H|}{|G|} \frac{|C|}{|C_H|}  [L^H : \mathbb{Q}] x^{1/2} + [K : \mathbb{Q}] x^{1/2}  \right).
		\]
	\end{theorem}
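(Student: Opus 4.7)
The plan is to use Serre's reduction principle, encoded in Propositions \ref{prop_serre_7} and \ref{prop_serre_8}, to transfer the Chebotarev count from $L/K$ down to $L/L^H$. The basic device is to replace the indicator $\varphi_C$ of $C$ in $G$ by an induced class function built from the corresponding indicator on $H$, which lets us exchange the top-field count for one on the intermediate fixed field $L^H$.

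Concretely, let $\varphi_{C_H}\colon H\to\{0,1\}$ be the characteristic function of $C_H$ inside $H$, and set $\varphi_G:=\operatorname{Ind}_H^G\varphi_{C_H}$. Using the standard induced-character formula
$$\varphi_G(g)=\frac{1}{|H|}\#\{s\in G: s^{-1}gs\in C_H\},$$
one checks that $\varphi_G$ vanishes outside $C$ (since $C_H\subseteq C$ and $C$ is $G$-invariant), and that for any $g$ lying in a single $G$-class $C_i\subseteq C$, the orbit-stabilizer theorem gives $\varphi_G(g)=|G|\,|C_i\cap H|/(|C_i|\,|H|)$. Working one $G$-class at a time and then summing yields the identity
$$\pi_C(x,L/K)=\frac{|C|\,|H|}{|G|\,|C_H|}\,\pi_{\varphi_G}(x,L/K),$$
with the proportionality constant $|C|/|C_H|$ understood through the single-class version (the union case reduces class-by-class to the single-class argument).

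I would then apply Proposition \ref{prop_serre_7} in the extension $L/K$ to replace $\pi_{\varphi_G}$ by $\widetilde{\pi}_{\varphi_G}$, at cost $O\!\bigl(\sup|\varphi_G|(\log d_L/|G|+n_K x^{1/2})\bigr)$. Since $\sup|\varphi_G|=|G|\,|C_H|/(|C|\,|H|)$, multiplying through by the prefactor $|C|\,|H|/(|G|\,|C_H|)$ cancels the supremum and produces the $[K:\mathbb{Q}]x^{1/2}$ summand of the theorem's error. Next, Proposition \ref{prop_serre_8}(a) gives the clean identity $\widetilde{\pi}_{\varphi_G}(x,L/K)=\widetilde{\pi}_{\varphi_{C_H}}(x,L/L^H)$. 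A second application of Proposition \ref{prop_serre_7}, now in the tower $L/L^H$ with $\sup|\varphi_{C_H}|=1$ and base-degree $[L^H:\mathbb{Q}]$, converts the right-hand side into $\pi_{C_H}(x,L/L^H)+O(\log d_L/|H|+[L^H:\mathbb{Q}]x^{1/2})$; multiplying by $|C|\,|H|/(|G|\,|C_H|)$ yields exactly the first and second error terms in the theorem.

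The main obstacle I expect lies in the induced-character identity when $C$ is a union of several $G$-classes: the centralizer size $|C_G(g)|$ is not constant across $C$, so the apparent simplification $\varphi_G=\tfrac{|G|\,|C_H|}{|C|\,|H|}\varphi_C$ is strictly valid only per-class, and one must argue class-by-class and sum. Beyond this, the remaining work is careful bookkeeping: matching the supremum of $\varphi_G$ against the prefactor in the first use of Proposition \ref{prop_serre_7}, and tracking discriminant and degree factors ($\log d_L$, $n_K$ versus $[L^H:\mathbb{Q}]$) across the two tiers of the tower.
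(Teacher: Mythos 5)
Your approach matches the paper's almost exactly: induce the indicator $\varphi_{C_H}$ from $H$ to $G$, relate the induced function to $\varphi_C$, pass between $\pi$ and $\widetilde{\pi}$ via Proposition~\ref{prop_serre_7} in each tier of the tower, and switch base fields with Proposition~\ref{prop_serre_8}(a). The only cosmetic difference is bookkeeping order: the paper first replaces $\pi_C$ by $\widetilde{\pi}_C$ (so it never needs $\sup|\varphi_G|$ and works only with $\sup|\varphi_C|=\sup|\varphi_{C_H}|=1$), then invokes the proportionality to convert $\widetilde{\pi}_C$ into $\widetilde{\pi}_{\varphi_G}$; you use the proportionality first and must then track $\sup|\varphi_G|$ through Proposition~\ref{prop_serre_7}. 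These yield the same three error contributions.

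The obstacle you flag at the end is a genuine issue, and it is worth being clearer about: by the induced-character formula, $\varphi_G(g)=\tfrac{|G|}{|H|}\cdot\tfrac{|C_i\cap H|}{|C_i|}$ for $g$ in a $G$-class $C_i\subseteq C$, so the identity $\varphi_G=\tfrac{|G|}{|H|}\cdot\tfrac{|C_H|}{|C|}\,\varphi_C$ holds \emph{only when the ratios $|C_i\cap H|/|C_i|$ coincide across all classes} $C_i$ in $C$ (for instance when $C$ is a single class). In general these ratios differ (e.g.\ $G=S_3$, $H=\langle(12)\rangle$, $C=\{e\}\cup\{\text{transpositions}\}$). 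As a consequence, your proposed fix --- ``argue class-by-class and sum'' --- does not by itself recover the stated aggregate factor $|C|/|C_H|$: summing the single-class versions gives $\tfrac{|H|}{|G|}\sum_i\tfrac{|C_i|}{|C_i\cap H|}\,\pi_{(C_i)_H}(x,L/L^H)$, not $\tfrac{|H|}{|G|}\tfrac{|C|}{|C_H|}\,\pi_{C_H}(x,L/L^H)$, and for the same reason $\sup|\varphi_G|$ is $\tfrac{|G|}{|H|}\max_i\tfrac{|C_i\cap H|}{|C_i|}$, not $\tfrac{|G||C_H|}{|H||C|}$. The paper's own proof writes $\operatorname{Ind}_H^G(\varphi_{C_H})=\tfrac{|G|}{|H|}\tfrac{|C_H|}{|C|}\varphi_C$ in \eqref{eq_1} without comment, so it shares exactly this gap; your instinct that the union case is the sticking point is correct, and an additional hypothesis (constancy of $|C_i\cap H|/|C_i|$, or $C$ a single class) or a different argument is needed to close it.
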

\begin{proof}
Let \( \varphi_G \) be the class function on \( G \) induced from \( C_H \). Then
\[
\varphi_G = \operatorname{Ind}_H^G (\varphi_{C_H}) = \frac{|G|}{|H|} \cdot \frac{|C_H|}{|C|} \cdot \varphi_C.
\]
By Proposition~\ref{prop_serre_8}\,(a),
\begin{equation} \label{eq_1}
	\widetilde{\pi}_{\varphi_G}(x, L/K) = \frac{|G|}{|H|} \cdot \frac{|C_H|}{|C|}\cdot \widetilde{\pi}_C(x, L/K)=	\widetilde{\pi}_{C_H}(x, L/L^H),
\end{equation}
and by Proposition~\ref{prop_serre_7} applied to \( \widetilde{\pi}_C(x, L/K) \) and
\( \widetilde{\pi}_{C_H}(x, L/K) \), we have
\begin{align*}
	\pi_C(x, L/K) 
		&=  \widetilde{\pi}_C(x, L/K)
	+ O\left( [K : \mathbb{Q}] x^{1/2} + \frac{\log d_L}{[L:K]}  \right) \\
	&= \frac{|H|}{|G|} \frac{|C|}{|C_H|} \cdot \widetilde{\pi}_{C_H}(x, L/L^H) 
	+ O\left( [K : \mathbb{Q}] x^{1/2} + \frac{\log d_L}{[L:K]}  \right) \\
	&= \frac{|H|}{|G|} \frac{|C|}{|C_H|} \cdot \pi_{C_H}(x, L/L^H) \\
	&\quad + O\left( 
\frac{|H|}{|G|} \frac{|C|}{|C_H|} \left( \frac{\log d_L}{[L : L^H]} + [L^H : \mathbb{Q}] x^{1/2} \right)
	+ [K : \mathbb{Q}] x^{1/2} + \frac{\log d_L }{[L:K]} 
	\right) \\
	&= \frac{|H|}{|G|} \frac{|C|}{|C_H|} \cdot \pi_{C_H}(x, L/L^H) \\
	&\quad + O\left( 
	\frac{|C|}{|G||C_H|} \log d_L
	+ \frac{|H|}{|G|}  \frac{|C|}{|C_H|} \cdot [L^H : \mathbb{Q}] x^{1/2}
	+ [K : \mathbb{Q}] x^{1/2} 
	\right).
\end{align*}
\end{proof}

	\newtheorem*{conjecture}{Conjecture}
	\newtheorem*{hypothesis}{Hypothesis}
	We now introduce some notation and state Artin's holomorphy conjecture and the \( \theta \)-quasi-GRH.

\begin{conjecture}[Artin’s Holomorphy Conjecture]
	Let \( L/K \) be a Galois extension with Galois group \( G = \operatorname{Gal}(L/K) \). We say that \( L/K \) satisfies the \textbf{Artin Holomorphy Conjecture (AHC)} if, for every irreducible character \( \chi \colon G \to \mathbb{C} \), the associated Artin \( L \)-function \( L(s, \chi) \) admits a meromorphic continuation to the entire complex plane and is analytic on \( \mathbb{C} \) when \( \chi \neq 1 \), and analytic on \( \mathbb{C} \setminus \{1\} \) when \( \chi = 1 \).
\end{conjecture}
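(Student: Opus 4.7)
The statement in question is the Artin Holomorphy Conjecture (AHC), introduced here as a hypothesis to be invoked (alongside $\theta$-quasi-GRH) in the non-CM arguments. Since AHC has stood open since Artin's 1923 formulation and is a central open problem of the Langlands program, there is no complete proof to propose; what I can do is lay out the canonical route any attack must take and point to the step at which current technology fails.

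The plan would be to establish AHC through strong Artin reciprocity: for each non-trivial irreducible $n$-dimensional representation $\rho\colon \mathrm{Gal}(L/K)\to\mathrm{GL}_n(\mathbb{C})$, produce a cuspidal automorphic representation $\pi(\rho)$ of $\mathrm{GL}_n(\mathbb{A}_K)$ such that $L(s,\rho)=L(s,\pi(\rho))$. Since Godement--Jacquet proved that cuspidal automorphic $L$-functions on $\mathrm{GL}_n$ are entire, this matching would immediately give holomorphy of $L(s,\chi)$ for irreducible $\chi\neq 1$. As a preliminary step, I would invoke Brauer induction to write $\chi=\sum_i n_i\,\mathrm{Ind}_{H_i}^G\chi_i$ with $\chi_i$ one-dimensional characters of subgroups $H_i$ and integer coefficients $n_i$ of either sign; class field theory then identifies each $L(s,\chi_i)$ with a Hecke $L$-function, entire when $\chi_i\neq 1$, and Artin's inductive property yields meromorphy of $L(s,\chi)$ on all of $\mathbb{C}$. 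The crux is promoting meromorphy to holomorphy, where the negative $n_i$ in Brauer's decomposition obstruct any direct positivity argument and force one into the automorphic setting.

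The main obstacle is the strong Artin conjecture for representations with non-solvable image. Known cases --- Arthur--Clozel for nilpotent extensions, Langlands--Tunnell and Khare--Wintenberger--Kisin for (respectively) tetrahedral/octahedral and more general odd two-dimensional representations of $\mathrm{Gal}(\overline{\mathbb{Q}}/\mathbb{Q})$, and Newton--Thorne for symmetric power functoriality on $\mathrm{GL}_2$ --- cover many particular number-theoretic applications but fall short of the Galois groups $\mathrm{Gal}(\mathbb{Q}(E[n])/\mathbb{Q})$, which for Serre curves contain $\mathrm{GL}_2(\mathbb{Z}/n\mathbb{Z})$ and are thoroughly non-solvable. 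Functoriality in this generality appears to require fundamentally new input and is well beyond the scope of this paper.

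For the purposes of the non-CM results, the honest proof proposal is therefore to take AHC as a stated hypothesis rather than attempt to derive it: it enters downstream only as an input to effective Chebotarev bounds of the type in Theorem~\ref{theorem_ch1}, and any genuine progress on AHC would lie in a direction entirely orthogonal to the sieve-theoretic contributions of the present work.
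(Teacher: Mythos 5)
You have correctly identified that the statement is the enunciation of a conjecture, not a theorem, and that the paper offers no proof (nor could it); your synopsis of the Brauer-induction route, the role of strong Artin reciprocity, and the known solvable/two-dimensional cases is accurate. One small but substantive point worth flagging: the paper does not actually need AHC as an unproven hypothesis anywhere in its arguments. Immediately after stating the conjecture it records Artin's 1927 theorem that AHC holds when $\operatorname{Gal}(L/K)$ is abelian, and in the proof of Lemma~\ref{function_g} the only invocation of AHC is for the extension $L_q^{U_q}/L_q^{B_q}$, whose Galois group $B_q/U_q$ is the (abelian) diagonal torus in $\mathrm{GL}_2(\mathbb{F}_q)$. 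So for the specific Galois groups $\mathrm{Gal}(\mathbb{Q}(E[n])/\mathbb{Q})$ that concern this paper, one never descends into the non-solvable territory you describe: the device of passing from $G$ to a Borel $B_q$ and then to the abelian quotient $B_q/U_q$ (via Theorems~\ref{theorem_ch1} and~\ref{theorem_ch2}) is precisely what avoids needing AHC beyond the abelian case. The only genuine conjectural input in the non-CM results is therefore the $\theta$-quasi-GRH, not AHC.
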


It is known that the AHC holds when \( \operatorname{Gal}(L/K) \) is abelian~\cite{Ar37}.

	We present a version of the Chebotarev density theorem based on the following weaker hypothesis.

\begin{hypothesis}[\(\theta\)-quasi-GRH] 
	Let \( L/\mathbb{Q} \) be a number field, and let \( \frac{1}{2} \leq \theta < 1 \). We say that the \(\theta\)-\textbf{quasi-Generalized Riemann Hypothesis ($\theta$-quasi-GRH)} holds for \( L \) if the Dedekind zeta function \( \zeta_L(s) \) has no zeros in the half-plane \( \operatorname{Re}(s) > \theta \).
\end{hypothesis}

The following theorem originates from Murty, Murty, and Saradha~\cite[Proposition~3.12]{chebotarev}, which corresponds to the case \( \theta = \frac{1}{2} \). To relax the hypothesis on \( \theta \), one can follow a similar approach to that of~\cite[Proposition~3.12]{chebotarev}, with appropriate modifications. The version stated here is as given in David and Wu~\cite[Theorem~3.8]{wu}.

	\begin{theorem}[{\cite[Proposition 3.12]{chebotarev}, \cite[Theorem 3.8]{wu}}]\label{theorem_ch2}
	Let \( L/K \) be a Galois extension of number fields with Galois group \( G = \operatorname{Gal}(L/K) \). 	Let \( H \subset G \) be a normal subgroup, and let \( D \subset G \) be a non-empty union of conjugacy classes such that \( H D \subseteq D \).   Suppose that the \( \theta \)-quasi-GRH holds for \( L \), and that the  AHC holds for the extension \( L^H/K \), whose Galois group is \( G/H \).
	
Then,
	\[
	\pi_D(x, L/K) = \frac{|D|}{|G|} \operatorname{li}(x) + O\left( \left( \frac{|D|}{|H|} \right)^{1/2} x^\theta n_K \log(M(L/K)x) \right).
	\]
	\end{theorem}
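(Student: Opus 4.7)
The plan is a two-step argument: a group-theoretic reduction from $L/K$ down to the subextension $L^H/K$, followed by the standard analytic Chebotarev machinery on $L^H/K$. Since $H$ is normal in $G$ and $HD \subseteq D$, the set $D$ is a union of fibers of the projection $G \twoheadrightarrow G/H$; let $\overline{D} \subseteq G/H$ denote its image, which is itself a union of conjugacy classes in $G/H$, with $|\overline{D}| = |D|/|H|$. The indicator $\varphi_D$ is the pullback of $\varphi_{\overline{D}}$ along $G \to G/H$, so Proposition~\ref{prop_serre_8}(b) gives
\[
\widetilde{\pi}_{\varphi_D}(x, L/K) = \widetilde{\pi}_{\varphi_{\overline{D}}}(x, L^H/K),
\]
and Proposition~\ref{prop_serre_7} converts $\widetilde{\pi}$ back to $\pi$ on both sides at the cost of an error of size $n_K x^{1/2} + (\log d_L)/|G|$, which will be absorbed into the final bound.

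\textbf{Character decomposition and main term.} On $L^H/K$, whose Galois group $G/H$ satisfies AHC by hypothesis, I expand $\varphi_{\overline{D}}$ in irreducible characters:
\[
\varphi_{\overline{D}} = \sum_{\chi \in \widehat{G/H}} c_\chi \chi, \qquad c_\chi = \frac{1}{|G/H|}\sum_{g \in \overline{D}} \overline{\chi}(g),
\]
so that $\pi_{\overline{D}}(x, L^H/K) = \sum_\chi c_\chi\, \pi_\chi(x, L^H/K)$. The trivial character contributes $c_1 = |\overline{D}|/|G/H| = |D|/|G|$ times $\pi_K(x) = \operatorname{li}(x) + O(x^\theta \log x)$, yielding the main term $\tfrac{|D|}{|G|}\operatorname{li}(x)$.

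\textbf{Error control.} For each nontrivial irreducible $\chi$, AHC makes $L(s,\chi, L^H/K)$ entire; by Brauer induction these $L$-functions divide $\zeta_L/\zeta_K$, so every nontrivial zero of $L(s,\chi)$ is a zero of $\zeta_L$, hence lies in $\operatorname{Re}(s) \leq \theta$ under $\theta$-quasi-GRH. A standard truncated-Perron / explicit-formula argument then produces, uniformly in $\chi$,
\[
\psi_\chi(x) - \delta_{\chi,1} x \;\ll\; x^\theta \log^2 \bigl(M(L/K)x\bigr) \cdot n_K.
\]
The Parseval identity $\sum_\chi |c_\chi|^2 = |\overline{D}|/|G/H|$ combined with Cauchy--Schwarz yields
\[
\sum_{\chi \neq 1} |c_\chi| \;\leq\; |G/H|^{1/2} \cdot \bigl(|\overline{D}|/|G/H|\bigr)^{1/2} \;=\; |\overline{D}|^{1/2} \;=\; (|D|/|H|)^{1/2},
\]
which is precisely the factor appearing in the claimed error. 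Multiplying through gives the bound $O\bigl((|D|/|H|)^{1/2} x^\theta n_K \log(M(L/K)x)\bigr)$.

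\textbf{Main obstacle.} The principal technical difficulty is adapting the estimate of \cite{chebotarev}, originally written for $\theta = 1/2$ under full GRH, to a general $\theta \in [\tfrac{1}{2},1)$. This amounts to re-examining the contour-shift bounds in the truncated Perron argument and verifying that, with zeros confined to $\operatorname{Re}(s) \leq \theta$, the integral closes with the exponent $x^\theta$ in place of $x^{1/2}$ and the same logarithmic dependence on $M(L/K)$. In practice this is a careful substitution $1/2 \mapsto \theta$ throughout the original analytic estimates, requiring no new analytic ingredient beyond the strengthened zero-free half-plane supplied by $\theta$-quasi-GRH.
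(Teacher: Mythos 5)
The paper does not actually prove Theorem~\ref{theorem_ch2}: it states it as a citation (\cite[Prop.~3.12]{chebotarev}, \cite[Thm.~3.8]{wu}) and only remarks that the \(\theta\)-quasi-GRH version follows by the same method. Your sketch is therefore not comparable to a proof in this paper, but it does match the standard MMS/David--Wu argument that the citation alludes to, and it is correct in overall shape: pass to \(\widetilde{\pi}\) via Proposition~\ref{prop_serre_7}, use Proposition~\ref{prop_serre_8}(b) with \(HD=D\) to descend from \(G\) to \(G/H\) with \(|\overline{D}|=|D|/|H|\), expand in irreducible characters of \(G/H\) where AHC applies, and invoke Aramata--Brauer so that quasi-GRH for \(\zeta_L\) confines the zeros of each \(L(s,\chi,L^H/K)\).

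Two points in the error analysis need tightening. First, the per-character bound you assert, \(\psi_\chi(x)-\delta_{\chi,1}x\ll x^\theta n_K\log^2(M(L/K)x)\) ``uniformly in \(\chi\)'', is not uniform: the Artin \(L\)-function \(L(s,\chi,L^H/K)\) has degree \(d_\chi n_K\) with \(d_\chi=\dim\chi\), so its zero count up to height \(T\) — and hence the explicit-formula error — carries a factor \(d_\chi\). The quantity you must control is therefore \(\sum_\chi |c_\chi|\,d_\chi\), not \(\sum_\chi|c_\chi|\). Your Cauchy--Schwarz then should pair Parseval with \(\sum_\chi d_\chi^2=|G/H|\), giving
\[
\sum_\chi |c_\chi|\,d_\chi \;\le\; \Bigl(\sum_\chi |c_\chi|^2\Bigr)^{1/2}\Bigl(\sum_\chi d_\chi^2\Bigr)^{1/2} \;=\; \Bigl(\tfrac{|\overline{D}|}{|G/H|}\Bigr)^{1/2}\,|G/H|^{1/2} \;=\; \Bigl(\tfrac{|D|}{|H|}\Bigr)^{1/2}.
\]
The numerical endpoint coincides with what you wrote, but your version (bounding the number of characters by \(|G/H|\) and ignoring \(d_\chi\)) only happens to give the right constant; the reasoning is valid in general only after this correction. (The omission is invisible in the paper's own use of the theorem, where \(G/H\) is abelian so \(d_\chi\equiv 1\), but the theorem as stated is more general.) Second, you produce a \(\log^2(M(L/K)x)\) in the per-character estimate while the target has a single \(\log(M(L/K)x)\); matching the stated bound requires the sharper log-dependence in the explicit-formula/contour estimate as carried out in the cited sources, not just a naive substitution \(1/2\mapsto\theta\). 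Neither issue signals a wrong approach, but both must be repaired for the argument to actually close on the stated theorem.
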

	
	\subsection{Vector sieve of linear sieves}
We begin with the following setup  as in Heath-Brown and Li \cite{HBL2016}. Let \( \mathcal{W} \subset \mathbb{N}^2 \) be a finite subset. Suppose \( z_1, z_2 \geq 2 \) satisfy
\[
\log z_1 \asymp \log z_2.
\]
For positive integers \( d_1, d_2 \), define
\[
\mathcal{W}_{d_1, d_2} := \left\{ (n_1, n_2) \in \mathcal{W} \,:\, d_1 \mid n_1,\; d_2 \mid n_2 \right\},
\]
and
\[
S(\mathcal{W}; z_1, z_2) := \left\{ (m, n) \in \mathcal{W} \,:\,  (m, P(z_1)) = (n, P(z_2)) = 1 \right\},
\]
where
\[
P(z) := \prod_{p < z} p.
\]

	Suppose that
	\begin{equation}\label{W_d}
		|\mathcal{W}_{d_1,d_2}| = h(d_1,d_2) X + r(d_1,d_2)
	\end{equation}
	for some multiplicative function $h(d_1,d_2) \in (0, 1]$ such that for all prime $p$,
	\begin{equation}\label{W_d_condition1}
		h(p, 1) + h(1, p) - 1 < h(p, p) \leq h(p, 1) + h(1, p),
	\end{equation}  and there exists a constant \( C_1 \geq 2 \) such that
\begin{equation}\label{condition_h}
h(p, 1),\; h(1, p) \leq C_1 p^{-1}, \quad \text{and} \quad h(p, p) \leq C_1 p^{-2}.
\end{equation}

	Moreover, suppose the functions 
	\[
	h_1(d) := h(d, 1) \quad \text{and} \quad h_2(d) := h(1, d)
	\] satisfy the linear sieve assumption 
	\begin{equation}\label{linear_sieve_condition}
		\prod_{w \leq p < z} (1 - h(p))^{-1} \leq \frac{\log z}{\log w} \left( 1 + \frac{L}{\log w} \right).
	\end{equation}
Finally, for real numbers \( z_0 < z_1, z_2 \), define
\begin{align}\label{definition_Vi}
	V_i := \prod_{z_0 \leq p < z_i} \left( 1 - h_i(p) \right) \quad (i = 1, 2), \qquad
	V(z_0, h^*) := \prod_{p < z_0} \left( 1 - h^*(p) \right),
\end{align}
where \( h^* \) is the multiplicative function given by
\begin{equation} \label{Def: hstar}
	h^*(d) = \sum_{e_1 e_2 e_3 = d} h(e_1 e_3, e_2 e_3) \mu(e_3).
\end{equation}
	
	Then we have the following upper and lower bounds for $S(\mathcal{W}; z_1,z_2)$.
	
	\begin{prop}\label{prop_vector sieve}\cite[Proposition 1]{HBL2016}
		Suppose that \( h(d_1,d_2) \) satisfies \eqref{condition_h}, and that
	\[
	h_1(d) := h(d, 1), \qquad h_2(d) := h(1, d)
	\]
	both satisfy the linear sieve assumption \eqref{linear_sieve_condition}. Let \( F(s) \) and \( f(s) \) denote the standard upper and lower bound functions for the linear sieve \cite[Theorem~11.12; see also (12.1) and (12.2)]{FI2010}.
	
	Let \( z_1, z_2 \geq 2 \) with \( \log z_1 \asymp \log z_2 \), and define \( z_0 := e^{(\log z_1 z_2)^{1/3}} \). Let \( D = z_1^{s_1} z_2^{s_2} \), where \( 1 \leq s_i \ll 1 \) for \( i \in \{1, 2\} \), and define
	\[
	\sigma_i := \frac{\log D}{\log z_i}, \qquad i = 1, 2.
	\]
	
	Then we have the upper bound
\[
|S(\mathcal{W}; z_1, z_2)| \leq X V(z_0, h^*) V_1 V_2 F(\sigma_1, \sigma_2) \left\{ 1 + O\left((\log D)^{-1/6} \right) \right\}
+ O_{\epsilon} \left( \sum_{d_1 d_2 < D^{1 + \epsilon}} \tau(d_1 d_2)^4 |r(d_1, d_2)| \right),
\]
for any fixed \( \epsilon > 0 \), where
\[
F(\sigma_1, \sigma_2) := \inf \left\{ F(s_1) F(s_2) \,\middle\vert\, \frac{s_1}{\sigma_1} + \frac{s_2}{\sigma_2} = 1,\ s_i \geq 1 \ (i = 1, 2) \right\}.
\]

Similarly, we have the lower bound
\[
|S(\mathcal{W}; z_1, z_2) |\geq X V(z_0, h^*) V_1 V_2 f(\sigma_1, \sigma_2) \left\{ 1 + O\left((\log D)^{-1/6} \right) \right\}
+ O_{\epsilon} \left( \sum_{d_1 d_2 < D^{1 + \epsilon}} \tau(d_1 d_2)^4 |r(d_1, d_2)| \right),
\]
where
\[
f(\sigma_1, \sigma_2) := \sup \left\{ f(s_1) F(s_2) + f(s_2) F(s_1) - F(s_1) F(s_2) \,\middle\vert\, \frac{s_1}{\sigma_1} + \frac{s_2}{\sigma_2} = 1,\ s_i \geq 2 \ (i = 1, 2) \right\}.
\]
	\end{prop}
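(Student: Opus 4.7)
\smallskip
\noindent\textbf{Proof plan.} I would follow the Br\"udern--Fouvry vector-sieve strategy, as carried out by Heath-Brown and Li~\cite{HBL2016}. The key pointwise inequality is
\[
\mathbf{1}_A \mathbf{1}_B \leq a^+ b^+, \qquad \mathbf{1}_A \mathbf{1}_B \geq a^+ b^- + a^- b^+ - a^+ b^+,
\]
valid whenever $a^\pm, b^\pm$ arise as linear-sieve weight sums bracketing $\mathbf{1}_A, \mathbf{1}_B$ with $a^+, b^+ \geq 0$; here $A$ (resp.\ $B$) is the event that the first (resp.\ second) coordinate is coprime to $P(z_1)$ (resp.\ $P(z_2)$). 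The first step is to split the sifting primes at $z_0 = \exp((\log z_1 z_2)^{1/3})$: treat the small primes $p < z_0$ exactly via M\"obius inversion in both coordinates simultaneously, and sieve the range $[z_0, z_i)$ on coordinate $i$ with Rosser's linear sieve weights $\lambda^{\pm,(i)}_{d_i}$ of level $D_i = z_i^{s_i}$, so that $D = D_1 D_2$.

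Substituting $|\mathcal{W}_{d_1, d_2}| = h(d_1, d_2) X + r(d_1, d_2)$ separates the main term from a bilinear remainder. In the main term, the M\"obius sum over squarefree $e \mid P(z_0)$ collapses by the definition~\eqref{Def: hstar} to the factor
\[
V(z_0, h^*) = \prod_{p < z_0}\bigl(1 - h(p,1) - h(1,p) + h(p,p)\bigr).
\]
The hypothesis~\eqref{W_d_condition1}, together with $h(p,p) \leq C_1 p^{-2}$ from~\eqref{condition_h}, licenses the approximate decoupling $h(d_1, d_2) \approx h_1(d_1) h_2(d_2)$ for squarefree $d_i$ with prime factors in $[z_0, z_i)$. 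The linear-sieve assumption~\eqref{linear_sieve_condition} together with the standard linear-sieve bounds \cite[Theorem~11.12]{FI2010} then contributes a factor of $V_i F(s_i)$ (resp.\ $V_i f(s_i)$) on each coordinate. Optimizing over admissible splits $s_1 \log z_1 + s_2 \log z_2 = \log D$ yields $F(\sigma_1, \sigma_2)$ in the upper bound; the lower bound uses the trilinear combination $f(s_1) F(s_2) + f(s_2) F(s_1) - F(s_1) F(s_2)$ arising from the vector-sieve identity and produces $f(\sigma_1, \sigma_2)$.

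For the remainder, since $|\lambda^{\pm,(i)}_{d_i}| \leq 1$ and the M\"obius factor for the small-prime sieving introduces an additional squarefree divisor, each pair $(d_1, d_2)$ with $d_1 d_2 < D^{1+\epsilon}$ appears with total multiplicity bounded by $\tau(d_1 d_2)^4$, giving the claimed bilinear error. The main technical obstacle, in my view, is the bookkeeping for the hybrid decoupling -- exact via M\"obius for $p < z_0$, approximate via~\eqref{W_d_condition1} for $p \geq z_0$ -- while ensuring all crossed terms are absorbed into the $(\log D)^{-1/6}$ relative error. That exponent is ultimately dictated by balancing, at the cutoff $z_0$, the Mertens-type error in the small-prime product against the sieve-truncation error, which is why $z_0$ is chosen as $\exp((\log z_1 z_2)^{1/3})$.
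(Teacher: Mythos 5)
Your sketch faithfully reconstructs the Brüdern--Fouvry vector-sieve argument of Heath-Brown and Li, which is exactly what the paper relies on: the paper itself gives no proof, simply citing \cite[Section~3.3]{HBL2016}. The structure you describe — the pointwise inequality $\mathbf{1}_A\mathbf{1}_B \geq a^+b^- + a^-b^+ - a^+b^+$, exact Möbius inversion below $z_0$ producing $V(z_0,h^*)$, Rosser weights of levels $D_i = z_i^{s_i}$ above $z_0$ with the decoupling enabled by $h(p,p)\ll p^{-2}$, and the bilinear remainder picking up $\tau(d_1d_2)^4$ — is the same route as HBL2016.
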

 
	\begin{proof}
		See \cite[Section 3.3]{HBL2016} for details.
	\end{proof}

	\begin{rem}
	The vector sieve in Proposition~\ref{prop_vector sieve} extends to two beta-sieves with mixed dimensions \cite[Proposition~3.10]{kunja}. It can also be adapted to sieve problems with additional variables \( z_1, \dots, z_k \) for \( k \geq 2 \), given suitable combinatorial modifications.

		\begin{comment}
		By employing a vector sieve with additional variables, one could potentially study lower bounds on the number of splitting primes \( p \leq x \) for some fixed \( N > 2 \) satisfying  
		\[
		\frac{|E(\mathbb{F}_p)|}{8} = 8P_{k_1}, \quad  
		\frac{|E(\mathbb{F}_{p^2})|}{|E(\mathbb{F}_p)|} = 4 P_{k_2}, \quad  
		\frac{|E(\mathbb{F}_{p^l})|}{|E(\mathbb{F}_p)|} = P_{k_l}, \quad  
		\text{for any prime } 2 <\ell\leq N,
		\]
		with  
		\[
		k_1 + k_2 + \dots + k_{\pi(N)} \leq r,
		\]
		where the optimal choice of the parameter \( r \) depends on numerical computations specific to each case. However, such a generalization to multiple variables can be computationally intensive and may not always be of practical interest.
		\end{comment}
	\end{rem}

	\section{Orders   of \(E(\mathbb{F}_p)\) and \(E(\mathbb{F}_{p^2})\) for CM elliptic curves}\label{W(x)}
	
In this section, we prove Theorem~\ref{theorem1_intro} using a vector sieve. We then establish Theorem~\ref{theorem2_intro} by analyzing the group structures of the elliptic curves counted in Theorem~\ref{theorem1_intro}. Finally, we apply a simplified sieve argument to prove Theorem~\ref{intro_cyclicity_lower_bound}.
	
	\subsection{Sieve Set-up and Weighted Terms}\label{section_setup}
We put
\begin{equation}\label{def_P(z)}
	\mathcal{P}(z) := \{ p \text{ prime} : p < z \}, \quad \text{and} \quad P(z) := \prod_{p \in \mathcal{P}(z)} p = \prod_{p < z} p.
\end{equation}

Let \( 0 < \theta_2 \leq \theta_1 < \frac{1}{2} \) be parameters to be chosen later. Define
\[
z_1 := x^{\theta_1}, \qquad z_2 := x^{\theta_2}.
\]
Then,
\[
0 < z_2 \leq z_1 < x^{1/2}.
\]

Let \( \delta_1 \) and \( \delta_2 \) be parameters satisfying \( \theta_1 < \delta_1 < 1 \) and \( \theta_2 < \delta_2 < 1 \), and define
\[
y_1 := x^{\delta_1}, \qquad y_2 := x^{\delta_2}.
\]

We will apply the vector sieve from Proposition~\ref{prop_vector sieve} to the multiset
\[	\mathcal{W} := \left\{ 
\left(N\left(\frac{\pi - 1}{2(1+i)}\right),\; N\left(\frac{\pi + 1}{2}\right)\right)  
\;\middle|\; N(\pi) = p \leq x,\; \pi \in \mathbb{Z}[i] \text{ primary}
\right\},\]
and incorporate a weighted sieve to reduce the total number of prime factors:
\[
\omega\left(N\left(\frac{\pi - 1}{2(1+i)}\right)\right) + \omega\left(N\left(\frac{\pi + 1}{2}\right)\right).
\]
Here, the condition of being \emph{primary} means \( \pi \equiv 1 \pmod{2(1+i)} \).
Note that the elements of \( \mathcal{W} \) are indexed by \( \pi \) and counted with multiplicity. Sets in which elements may appear with multiplicity are referred to as \emph{multisets}.

For integers \( d_1, d_2 \), define  
\[
\mathcal{W}_{d_1, d_2} := \left\{ (m,n) \in \mathcal{W}  \;:\;  d_1 \mid m,\; d_2 \mid n \right\},
\]
and  
\[	S(\mathcal{W}; z_1, z_2) := \left\{ (m, n) \in \mathcal{W} \;:\; (P(z_1), m) = (P(z_2), n) = 1 \right\}.\]

Consider the weight function
\[
w_p(y) := 1 - \frac{\log p}{\log y}.
\]
Note that for any \( (a, b) \in \mathcal{W} \), we have \( a, b \leq x \). Since \( w_p(y_1) < 0 \) for \( p > y_1 \), it follows that
\begin{align*}
	\sum_{z_1 \leq p \leq y_1} w_p(y_1) \left| S(\mathcal{W}_{p,1}; z_1, z_2) \right|
	&\geq \sum_{z_1 \leq p \leq x} w_p(y_1) \left| S(\mathcal{W}_{p,1}; z_1, z_2) \right| \\
	&= \sum_{(a,b) \in S(\mathcal{W}; z_1, z_2)} \left( \omega(a) - \frac{1}{\log y_1} \sum_{p \mid a} \log p \right) \\
	&\geq \sum_{(a,b) \in S(\mathcal{W}; z_1, z_2)} \left( \omega(a) - \frac{\log x}{\log y_1} \right) \\
	&= \sum_{(a,b) \in S(\mathcal{W}; z_1, z_2)} \left( \omega(a) - \frac{1}{\delta_1} \right),
\end{align*}
where \( \omega(a) \) denotes the number of distinct prime factors of \( a \).
Similarly, we have
\[
\sum_{z_2 \leq p \leq y_2} w_p(y_2) \left| S(\mathcal{W}_{1,p}; z_1, z_2) \right|
\geq \sum_{(a,b) \in S(\mathcal{W}; z_1, z_2)} \left( \omega(b) - \frac{1}{\delta_2} \right).
\]

The number of elements \( (a, b) \in S(\mathcal{W}; z_1, z_2) \) for which \( a \) is not square-free is bounded by
\begin{equation}\label{square_free_3.1}
	\sum_{z_1 < p < x} \sum_{\substack{n \leq x \\ p^2 \mid n}} \tau(n) 
	\ll \sum_{z_1 < p < x} \sum_{\substack{k \leq x / p^2}} \tau(k) 
	\ll \sum_{z_1 < p < x} \frac{x}{p^2} \log \frac{x}{p^2} 
	\ll \frac{x}{z_1} \log x.
\end{equation}
Similarly, the number of \( (a, b) \in \mathcal{W} \) such that \( b \) is not square-free is bounded by \( \frac{x}{z_2} \log x \).

It then follows that for any constant \( \lambda > 0 \),
\begin{align}\label{omega(a)+omega(b)}
	&|S(\mathcal{W}; z_1, z_2)| 
	- \lambda \sum_{z_1 \leq p \leq y_1} w_p(y_1)\, |S(\mathcal{W}_{p,1}; z_1, z_2)| 
	- \lambda \sum_{z_2 \leq p \leq y_2} w_p(y_2)\, |S(\mathcal{W}_{1,p}; z_1, z_2)| \nonumber \\
	&\leq \sum_{(a,b) \in S(\mathcal{W}; z_1, z_2)}  
	\left(1 + \frac{\lambda}{\delta_1} + \frac{\lambda}{\delta_2} - \lambda \omega(a) - \lambda \omega(b) \right) \nonumber \\
	&\leq O\left( \frac{x}{z_2} \right) 
	+ \left( 1 + \frac{\lambda}{\delta_1} + \frac{\lambda}{\delta_2} \right)
	\#\left\{ (a, b) \in S(\mathcal{W}; z_1, z_2) \,\middle|\,
	\omega(a) + \omega(b) \leq \frac{1}{\lambda} + \frac{1}{\delta_1} + \frac{1}{\delta_2},\ a, b \text{ square-free} \right\}.
\end{align}

Therefore, it suffices to show that for sufficiently large \( x \),
\begin{align*}
	W(x) :=\ & |S(\mathcal{W}; z_1, z_2)| 
	- \lambda \sum_{z_1 \leq p \leq y_1} w_p(y_1)\, |S(\mathcal{W}_{p,1}; z_1, z_2)| 
	- \lambda \sum_{z_2 \leq p \leq y_2} w_p(y_2)\, |S(\mathcal{W}_{1,p}; z_1, z_2)| \\
	\geq\ & \left( c + o(1) \right) \frac{x}{(\log x)^3},
\end{align*}
for some constant \( c > 0 \).

	\subsection{Estimation of $W(x)$}
To apply the vector sieve to estimate \( W(x) \), we evaluate
\[
\mathcal{W}_{d_1,d_2} := \left\{ (a, b) \in \mathcal{W} \;:\; d_1 \mid a,\; d_2 \mid b \right\}, \quad \text{for } d_1 \mid P(z_1),\; d_2 \mid P(z_2).
\]

Since divisibility of norms does not correspond directly to divisibility  in the Gaussian integers, we convert the condition into a coprimality condition using the following lemma, similar to that in \cite{IU_elliptic}. This is a standard application of the inclusion-exclusion principle and generalizes naturally to any multiset \( \mathcal{C} \subset \mathbb{N}^k \) of \( k \)-tuples.

	\begin{lemma}\label{lemma_Mobius}
		Let \( \mathcal{C} \subset \mathbb{N}\times \mathbb{N}\) be a finite multiset of pairs of non-negative integers. For integers \( k_1, k_2 > 0 \), define
		\[
		S(\mathcal{C}, k_1, k_2) := \left\{ (c_1, c_2) \in \mathcal{C} \;\middle\vert\; (c_1, k_1) = 1, \ (c_2, k_2) = 1 \right\}.
		\]
	Then, for any square-free integers \( d_1, d_2 > 0 \), we have
		\[
		|\mathcal{C}_{d_1,d_2}| = \sum_{k_1 \mid d_1} \mu(k_1) \sum_{k_2 \mid d_2} \mu(k_2)|S(\mathcal{C}, k_1, k_2)|.
		\]
	\end{lemma}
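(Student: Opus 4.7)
The plan is to establish the lemma by a direct double Möbius inversion, reducing it to a one-variable identity that is applied separately in each coordinate.

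The key one-variable identity I would first verify is the following: for any squarefree integer $d > 0$ and any nonnegative integer $c$,
\[
[d \mid c] \;=\; \sum_{k \mid d} \mu(k)\,[(c,k) = 1].
\]
To see this, write $d = p_1 \cdots p_r$ and let $T := \{\, i : p_i \mid c \,\} \subseteq \{1,\dots,r\}$. Each divisor $k \mid d$ corresponds to a subset $S \subseteq \{1,\dots,r\}$ via $k = \prod_{i \in S} p_i$, and the condition $(c,k)=1$ becomes $S \cap T = \emptyset$, i.e.\ $S \subseteq T^c$. Hence the sum equals $\sum_{S \subseteq T^c}(-1)^{|S|}$, which vanishes unless $T^c = \emptyset$; that is, unless every $p_i$ divides $c$, which, because $d$ is squarefree, is equivalent to $d \mid c$.

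Given this identity, I would swap the order of summation on the right-hand side of the lemma. Expanding $|S(\mathcal{C},k_1,k_2)|$ as $\sum_{(c_1,c_2)\in\mathcal{C}} [(c_1,k_1)=1][(c_2,k_2)=1]$ and pulling the outer $(c_1,c_2)$-sum to the front yields
\[
\sum_{k_1 \mid d_1}\mu(k_1)\sum_{k_2 \mid d_2}\mu(k_2)\,|S(\mathcal{C},k_1,k_2)|
= \sum_{(c_1,c_2)\in\mathcal{C}}\!\Bigl(\sum_{k_1 \mid d_1}\!\mu(k_1)[(c_1,k_1)\!=\!1]\Bigr)\!\Bigl(\sum_{k_2\mid d_2}\!\mu(k_2)[(c_2,k_2)\!=\!1]\Bigr).
\]
Applying the one-variable identity to each inner sum (legitimate since $d_1,d_2$ are squarefree) collapses the two brackets into $[d_1 \mid c_1][d_2 \mid c_2]$, and the remaining sum over $\mathcal{C}$ is by definition $|\mathcal{C}_{d_1,d_2}|$, counted with multiplicity, which finishes the proof.

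There is no real obstacle here: the argument is a routine inclusion–exclusion, and the squarefree hypothesis on $d_1,d_2$ is needed precisely so that the Möbius inversion used in the one-variable step is valid. The only care required is to ensure that multiplicities are preserved throughout, which follows automatically from writing each counting function as a sum of indicator functions over $\mathcal{C}$ (with multiplicity), and that the factorization of the double sum into a product of two inner sums uses nothing beyond the independence of the coordinate conditions $(c_1,k_1)=1$ and $(c_2,k_2)=1$.
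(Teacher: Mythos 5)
Your proof is correct and rests on the same underlying mechanism as the paper's — a double Möbius/inclusion--exclusion argument over the two coordinates. The only organizational difference is that you isolate the clean pointwise identity $[d\mid c]=\sum_{k\mid d}\mu(k)[(c,k)=1]$ and then factor the double sum over $\mathcal{C}$, whereas the paper first rewrites $|S(\mathcal{C},k_1,k_2)|$ as a Möbius sum of $|\mathcal{C}_{t_1,t_2}|$ and then rearranges the nested sums; both come to the same thing, and your version makes the role of the squarefree hypothesis a touch more transparent.
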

	
	\begin{proof}
		Let \( C = \{c_{m,n}\}_{(m,n) \in \mathbb{N} \times \mathbb{N}} \) be the characteristic function of \( \mathcal{C} \), that is, \( c_{m,n} = k_{m,n} \) if \( (m,n) \in \mathcal{C} \) with multiplicity \( k_{m,n} \), and \( c_{m,n} = 0 \) otherwise. Then, 
		\[
		|S(\mathcal{C}, k_1, k_2)| = \sum_{\substack{t_1 \mid m \\ t_1 \mid k_1}} \mu(t_1) \sum_{\substack{t_2 \mid n \\ t_2 \mid k_2}} \mu(t_2) c_{m,n} = \sum_{t_1 \mid k_1} \mu(t_1) \sum_{t_2 \mid k_2} \mu(t_2) |\mathcal{C}_{t_1,t_2}|.
		\]
	Changing the order of summation, we obtain
		\begin{align} 
			\sum_{k_1 \mid d_1} \mu(k_1) \sum_{k_2 \mid d_2} \mu(k_2)|S(\mathcal{C}, k_1, k_2)|
			&= \sum_{k_1 \mid d_1} \mu(k_1) \sum_{k_2 \mid d_2} \mu(k_2) \sum_{t_1 \mid k_1} \mu(t_1) \sum_{t_2 \mid k_2} \mu(t_2) |\mathcal{C}_{t_1,t_2}|\nonumber \\
			&= \sum_{t_1 \mid d_1} \sum_{n_1 \mid \frac{d_1}{t_1}} \mu(t_1) \mu(n_1 t_1) \sum_{t_2 \mid d_2} \sum_{n_2 \mid \frac{d_2}{t_2}} \mu(t_2) \mu(n_2 t_2) |\mathcal{C}_{t_1,t_2}|\nonumber \\
			&= \sum_{t_1 \mid d_1} \mu^2(t_1) \sum_{t_2 \mid d_2} \mu^2(t_2) |\mathcal{C}_{t_1,t_2}| \sum_{n_1 \mid \frac{d_1}{t_1}} \mu(n_1) \sum_{n_2 \mid \frac{d_2}{t_2}} \mu(n_2)\nonumber \\
			&= |\mathcal{C}_{d_1,d_2}|,
		\end{align}
	where we used \( d_1 = n_1 t_1 \) and  \( d_2 = n_2 t_2 \) are square-free.
	\end{proof}

Applying Lemma~\ref{lemma_Mobius} to \( \mathcal{W} \), we obtain that for any square-free \( d_1, d_2 > 0 \),
	\begin{equation}\label{W_d_1_d_2}
		|\mathcal{W}_{d_1, d_2}| = \sum_{k_1 \mid d_1} \mu(k_1) \sum_{k_2 \mid d_2} \mu(k_2) |S(\mathcal{W}, k_1, k_2)|.
	\end{equation}

Since \( \mathbb{Z}[i] \) (or any imaginary quadratic number ring of class number 1) is a PID, we may identify \( \alpha \in \mathbb{Z}[i] \) with the principal ideal \( (\alpha) \) when the context is clear.

For any square-free integer \( k \) and \( \alpha \in \mathbb{Z}[i] \), we have \(  (N(\alpha), k) = 1 \) if and only if \( (\alpha, k) = (1) \) in \( \mathbb{Z}[i] \). Hence,
\begin{equation}\label{eq_S(W,k_1,k_2)}
	|S(\mathcal{W}, k_1, k_2)| = |S(\widetilde{\mathcal{W}}, k_1, k_2)|,
\end{equation}
where
\[
\widetilde{\mathcal{W}} := \left\{ \left( \frac{\pi - 1}{2(1+i)},\, \frac{\pi + 1}{2} \right) 
\;\middle|\; N(\pi) = p \leq x,\; \pi \in \mathbb{Z}[i] \text{ primary} \right\},
\]
and
\[
S(\widetilde{\mathcal{W}}, k_1, k_2) := 
\left\{ (\alpha, \beta) \in \widetilde{\mathcal{W}} \;\middle|\; (\alpha, k_1) = (1),\; (\beta, k_2) = (1) \right\}.
\]

To estimate \( |S(\mathcal{W}, k_1, k_2)| \), we apply the inclusion-exclusion principle for ideals in \( \mathbb{Z}[i] \). Define the Möbius function on ideals by
\[
\widehat{\mu}(\mathfrak{a}) = 
\begin{cases}
	1, & \text{if } \mathfrak{a} = (1), \\
	(-1)^r, & \text{if } \mathfrak{a} = \lambda_1 \cdots \lambda_r \text{ with distinct prime ideals } \lambda_i, \\
	0, & \text{if } \lambda^2 \mid \mathfrak{a} \text{ for some prime ideal } \lambda.
\end{cases}
\]
This function is multiplicative and satisfies
\[
\sum_{\mathfrak{a} \mid \mathfrak{b}} \widehat{\mu}(\mathfrak{a}) =
\begin{cases}
	1, & \text{if } \mathfrak{b} = (1), \\
	0, & \text{otherwise.}
\end{cases}
\]
The function \( \widehat{\mu} \) is well-defined in any number ring by unique factorization of ideals.

For any integers \( k_1, k_2 \), we apply inclusion-exclusion to obtain
\begin{align}\label{S(W,kappa1,kappa2)}
	|S(\widetilde{\mathcal{W}}, k_1, k_2)| 
	&= \sum_{ (\alpha_1, \alpha_2) \in \widetilde{\mathcal{W}} }  
	\sum_{\mathfrak{a}_1 \mid (\alpha_1, k_1)} \widehat{\mu}(\mathfrak{a}_1) 
	\sum_{\mathfrak{a}_2 \mid (\alpha_2, k_2)} \widehat{\mu}(\mathfrak{a}_2) \nonumber \\
	&= \sum_{\mathfrak{a}_1 \mid k_1} \sum_{\mathfrak{a}_2 \mid k_2} 
	\widehat{\mu}(\mathfrak{a}_1)\, \widehat{\mu}(\mathfrak{a}_2) 
	\left| \widetilde{\mathcal{W}}_{\mathfrak{a}_1, \mathfrak{a}_2} \right|,
\end{align}
where
\[
\widetilde{\mathcal{W}}_{\mathfrak{a}_1, \mathfrak{a}_2} := \left\{ (\alpha, \beta) \in \widetilde{\mathcal{W}} \;:\; \mathfrak{a}_1 \mid \alpha,\; \mathfrak{a}_2 \mid \beta \right\}.
\]
Thus, the problem reduces to estimating \( |\widetilde{\mathcal{W}}_{\mathfrak{a}_1, \mathfrak{a}_2}| \) for square-free ideals \( \mathfrak{a}_1 \mid P(z_1) , \mathfrak{a}_2 \mid P(z_2) \).

Suppose \( \mathfrak{a}_1 \mid \pi - 1 \) and \( \mathfrak{a}_2 \mid \pi + 1 \). If \( (\mathfrak{a}_1, \mathfrak{a}_2) \neq (1) \), then the only possible common prime divisor is \( (1 + i) \). However, if \( (1 + i) \mid \frac{\pi + 1}{2} \), then
\[
\pi \equiv -1 \pmod{2(1 + i)},
\]
contradicting the assumption that \( \pi \) is primary (i.e., \( \pi \equiv 1 \pmod{2(1 + i)} \)). Therefore, if \( \widetilde{\mathcal{W}}_{\mathfrak{a}_1, \mathfrak{a}_2} \neq \emptyset \), we must have \( \left( (1 + i)\mathfrak{a}_1,\, \mathfrak{a}_2 \right) = (1) \). \label{coprime_condition}

Next, observe that the condition
\[
\pi \equiv 1 \pmod{2(1 + i)\mathfrak{a}_1}, \quad \pi \equiv -1 \pmod{2\mathfrak{a}_2}
\]
is equivalent to
\[
\pi \equiv 1 \pmod{2(1 + i)\mathfrak{a}_1}, \quad \pi \equiv -1 \pmod{\mathfrak{a}_2}.
\]
Since \( (2(1 + i)\mathfrak{a}_1, \mathfrak{a}_2) = 1 \), by the Chinese Remainder Theorem, this is equivalent to a single congruence
\[
\pi \equiv \alpha \pmod{2(1 + i)\mathfrak{a}_1 \mathfrak{a}_2}
\]
for some \( \alpha \in \left( \mathbb{Z}[i] / 2(1 + i)\mathfrak{a}_1 \mathfrak{a}_2 \right)^\times \).

The condition that \( \pi \) is primary (i.e., \( \pi \equiv 1 \pmod{2(1 + i)} \)) ensures a unique representative among its associates. Thus, counting primary   prime elements \( \pi \) with $N(\pi)=p$ and  \( \pi \equiv \alpha \pmod{2(1 + i)\mathfrak{a}_1 \mathfrak{a}_2} \) is equivalent to counting prime ideals \( (\pi) \subset \mathbb{Z}[i] \) with $N(\pi)=p$ and  \( \pi \equiv \alpha \pmod{2(1 + i)\mathfrak{a}_1 \mathfrak{a}_2} \). This count is given by
\[
\Pi'(x; 2(1 + i) \mathfrak{a}_1 \mathfrak{a}_2, \alpha) := \#\left\{ (\pi) \subset \mathbb{Z}[i] \;\middle|\; \pi \equiv \alpha \!\!\!\!\pmod{2(1 + i)\mathfrak{a}_1 \mathfrak{a}_2},\; N(\pi) = p \leq x \right\}.
\]
We apply Proposition~\ref{Bombierri-number field_3} to ideals of the form
\[
I = 2(1 + i)\mathfrak{a}, \quad \text{with } \mathfrak{a} \text{ square-free},
\]
and observe that
\[
\frac{|\mathbb{Z}[i]^\times|}{\Phi(2(1 + i)\mathfrak{a})} = 
\begin{cases}
	\frac{1}{\Phi(\mathfrak{a})}, & \text{if } (1 + i) \nmid \mathfrak{a}, \\[5pt]
	\frac{1}{2\Phi(\mathfrak{a})}, & \text{if } (1 + i) \mid \mathfrak{a}.
\end{cases}
\]

We define \( \widetilde{g} \) to be the multiplicative function on ideals in \( \mathbb{Z}[i] \), supported on square-free ideals, given by
\[
\widetilde{g}(1 + i) = \frac{1}{2}, \qquad 
\widetilde{g}(\mathfrak{p}) = \frac{1}{\Phi(\mathfrak{p})} \quad \text{for prime ideals } \mathfrak{p} \neq (1 + i).
\]
Specifically,
\[
\widetilde{g}(\mathfrak{p}) = 
\begin{cases}
	\frac{1}{2}, & \text{if } \mathfrak{p} = (1 + i), \\[5pt]
	\frac{1}{p - 1}, & \text{if } \mathfrak{p} = (\pi) \text{ with } N(\pi) = p \text{ a splitting prime}, \\[5pt]
	\frac{1}{q^2 - 1}, & \text{if } \mathfrak{p} = (q) \text{ with } q \text{ an inert prime}.
\end{cases}
\]
Let 
\begin{equation}\label{def_R}
	R(\mathfrak{a}) := \max_{\alpha \in \left( \mathbb{Z}[i] / 2(1+i)\mathfrak{a} \right)^\times} 
	\left| \Pi'(x; 2(1 + i)\mathfrak{a}, \alpha) - \widetilde{g}(\mathfrak{a}) \operatorname{li}(x) \right|.
\end{equation}
Applying Proposition~\ref{Bombierri-number field_3}, for any \( A > 0 \), there exists \( B = B(A) > 0 \) such that
\begin{equation}\label{error_term_original}
	\sum_{ N(\mathfrak{a}) \leq \frac{x^{1/2}}{(\log x)^B} } \widehat{\mu}^2(\mathfrak{a})
	R(\mathfrak{a}) \ll \frac{x}{(\log x)^A},
\end{equation}
where the sum is over square-free ideals \( \mathfrak{a} \subset \mathbb{Z}[i] \) with \(N(\mathfrak{a}) \leq \frac{x^{1/2}}{(\log x)^B}\).

Define the multiplicative function \( \widetilde{h}(\mathfrak{a}_1, \mathfrak{a}_2) \) by
\[
\widetilde{h}(\mathfrak{a}_1, \mathfrak{a}_2) := 
\begin{cases}
	\widetilde{g}(\mathfrak{a}_1)\, \widetilde{g}(\mathfrak{a}_2), & \text{if } ((1 + i)\mathfrak{a}_1, \mathfrak{a}_2) = (1), \\[5pt]
	0, & \text{otherwise},
\end{cases}
\]
and we can write
\begin{equation}\label{eq_tilde_W}
	\left| \widetilde{\mathcal{W}}_{\mathfrak{a}_1, \mathfrak{a}_2} \right| = \widetilde{h}(\mathfrak{a}_1, \mathfrak{a}_2)\, \operatorname{li}(x) + \widetilde{r}(\mathfrak{a}_1, \mathfrak{a}_2),
\end{equation}
with the error term bounded by
\[
|\widetilde{r}(\mathfrak{a}_1, \mathfrak{a}_2)| \leq R(\mathfrak{a}_1 \mathfrak{a}_2).
\]

For square-free integers \( d_1, d_2 \), by \eqref{W_d_1_d_2}, \eqref{eq_S(W,k_1,k_2)}, and \eqref{S(W,kappa1,kappa2)}, we have
\begin{equation}\label{W_interchange}
	| \mathcal{W}_{d_1, d_2} | = \sum_{k_1 \mid d_1} \mu(k_1) \sum_{k_2 \mid d_2} \mu(k_2) \sum_{\mathfrak{a}_1 \mid k_1} \widehat{\mu}(\mathfrak{a}_1) \sum_{\mathfrak{a}_2 \mid k_2} \widehat{\mu}(\mathfrak{a}_2) |\widetilde{\mathcal{W}}_{\mathfrak{a}_1, \mathfrak{a}_2}|.
\end{equation}
	
	For an ideal \( \mathfrak{a} \subset \mathbb{Z}[i] \), define \[
	d(\mathfrak{a}) := \text{the smallest positive integer divisible by } \mathfrak{a}.
	\]
	In other words, $d(\mathfrak{a})$ \label{def_d_a}   is the unique positive integer such that \( (d(\mathfrak{a})) = \mathfrak{a} \cap \mathbb{Z} \).  \begin{comment}
		Specifically, suppose we have the factorization of the ideal \(\mathfrak{a}\) as 
		\[
		\mathfrak{a} = (1 + i)^k \prod_{i=1}^r \left((\pi_i)^{a_{i,1}} (\overline{\pi}_i)^{a_{i,2}}\right) \prod_{j=1}^s (q_j)^{b_j},
		\]
		where \( p_i = \pi_i \overline{\pi}_i \) are distinct primes \(\equiv 1 \pmod{4}\), and \( q_i \) are distinct primes \(\equiv 3 \pmod{4}\). Then
		\[
		d(\mathfrak{a}) = 2^k \prod_{i=1}^r p_i^{\max(a_{i,1}, a_{i,2})} \prod_{j=1}^s q_j^{b_j}.
		\]
	\end{comment}
	Clearly, $\mathfrak{a} \mid k$  if and only if \( d(\mathfrak{a}) \mid k \) for any integer \( k \).
	
Rewriting \eqref{W_interchange} by interchanging the order of summation (as in Lemma~\ref{lemma_Mobius}), we obtain for square-free integers \( d_1, d_2 >0\), 
	\begin{align}\label{order_summation}
		| \mathcal{W}_{d_1, d_2} | 
		= & \sum_{k_1 \mid d_1} \mu(k_1) \sum_{k_2 \mid d_2} \mu(k_2) 
		\sum_{d(\mathfrak{a}_1) \mid k_1} \widehat{\mu}(\mathfrak{a}_1) 
		\sum_{d(\mathfrak{a}_2) \mid k_2} \widehat{\mu}(\mathfrak{a}_2) 
		|\widetilde{\mathcal{W}}_{\mathfrak{a}_1, \mathfrak{a}_2}| \nonumber \\ 
		= & \sum_{d(\mathfrak{a}_1) \mid d_1} \widehat{\mu}(\mathfrak{a}_1) 
		\sum_{n \mid \frac{d_1}{d(\mathfrak{a}_1)}} \mu(n d(\mathfrak{a}_1)) 
		\sum_{d(\mathfrak{a}_2) \mid d_2} \widehat{\mu}(\mathfrak{a}_2) 
		\sum_{n \mid \frac{d_2}{d(\mathfrak{a}_2)}} \mu(n d(\mathfrak{a}_2)) 
		|\widetilde{\mathcal{W}}_{\mathfrak{a}_1, \mathfrak{a}_2}| \nonumber \\ 
		= & \sum_{d(\mathfrak{a}_1) = d_1} \sum_{d(\mathfrak{a}_2) = d_2} 
		\widehat{\mu}(\mathfrak{a}_1\mathfrak{a}_2) \mu(d(\mathfrak{a}_1)) \mu(d(\mathfrak{a}_2)) 
		|\widetilde{\mathcal{W}}_{\mathfrak{a}_1, \mathfrak{a}_2}|.
	\end{align}
In the last step, we used the fact that \( \widehat{\mu}(\mathfrak{a}_1 \mathfrak{a}_2) = \widehat{\mu}(\mathfrak{a}_1)\widehat{\mu}(\mathfrak{a}_2) = 1 \) whenever \( (\mathfrak{a}_1, \mathfrak{a}_2) = (1) \), and that \( |\widetilde{\mathcal{W}}_{\mathfrak{a}_1, \mathfrak{a}_2}| = 0 \) when \( (\mathfrak{a}_1, \mathfrak{a}_2) \neq (1) \).

By \eqref{eq_tilde_W} and \eqref{order_summation}, we have  
\begin{equation}\label{equation_W_{d1,d2}}
	|\mathcal{W}_{d_1, d_2}| = \operatorname{li}(x) \, h(d_1, d_2) + r(d_1, d_2),
\end{equation}
where  
\begin{align}
	h(d_1, d_2) &= \sum_{\substack{d(\mathfrak{a}_1) = d_1 \\ d(\mathfrak{a}_2) = d_2}} \widehat{\mu}(\mathfrak{a}_1 \mathfrak{a}_2)\, \mu(d(\mathfrak{a}_1)) \mu(d(\mathfrak{a}_2))\, \widetilde{h}(\mathfrak{a}_1, \mathfrak{a}_2), \nonumber \\
	r(d_1, d_2) &= \sum_{\substack{d(\mathfrak{a}_1) = d_1 \\ d(\mathfrak{a}_2) = d_2}} \widehat{\mu}(\mathfrak{a}_1 \mathfrak{a}_2)\, \mu(d(\mathfrak{a}_1)) \mu(d(\mathfrak{a}_2))\, \widetilde{r}(\mathfrak{a}_1, \mathfrak{a}_2). \label{eq_R}
\end{align}

The function \( h(d_1, d_2) \) is multiplicative and supported on pairs of square-free integers. Its values at primes are given by:
\begin{equation}\label{def:h_values}
	\begin{aligned}
		h(p, 1) &= 
		\begin{cases}
			\frac{1}{2}, & \text{if } p = 2, \\
			\frac{2}{p - 1} - \frac{1}{(p - 1)^2}, & \text{if } p \equiv 1 \pmod{4}, \\
			\frac{1}{p^2 - 1}, & \text{if } p \equiv 3 \pmod{4},
		\end{cases} \\
		h(1, p) &= 
		\begin{cases}
			0, & \text{if } p = 2, \\
			\frac{2}{p - 1} - \frac{1}{(p - 1)^2}, & \text{if } p \equiv 1 \pmod{4}, \\
			\frac{1}{p^2 - 1}, & \text{if } p \equiv 3 \pmod{4},
		\end{cases} \\
		h(p, p) &=
		\begin{cases}
			0, & \text{if } p = 2 \text{ or } p \equiv 3 \pmod{4}, \\
			\frac{2}{(p - 1)^2}, & \text{if } p \equiv 1 \pmod{4}.
		\end{cases}
	\end{aligned}
\end{equation}

\begin{comment}
	One can also verify the above expressions before performing the change of summation in \eqref{order_summation}. This change greatly simplifies the expression.
\end{comment}

 \subsection{Estimation of   $r(d_1,d_2)$}

To apply the vector sieve in Proposition~\ref{prop_vector sieve}, we need to bound the error term
\[
\sum_{\substack{d_1 d_2 < D^{1 + \epsilon} \\ d_1 \mid P(z_1),\, d_2 \mid P(z_2)}} \tau(d_1 d_2)^4\, |r(d_1, d_2)|.
\]

We begin by deriving a twisted version of \eqref{error_term_original}, using a standard application of the Cauchy--Schwarz inequality.

\begin{lemma}\label{lemma_error_term_1}
	Let \( R(\mathfrak{a}) \) be as defined in \eqref{def_R}.  
	For any \( A > 0 \), and integers \( k, r \geq 0 \), there exists a constant \( B = B(A, k, r) > 0 \) such that  
	\begin{equation}\label{lemma_1_eq}
		\sum_{N(\mathfrak{a}) \leq \frac{x^{1/2}}{(\log x)^B}} 
		\widehat{\mu}^2(\mathfrak{a})\,
		\tau(N(\mathfrak{a}))^k\, 2^{r \omega(N(\mathfrak{a}))}\,
		R(\mathfrak{a}) 
		\ll_{k,r } \frac{x}{(\log x)^A}.
	\end{equation}
\end{lemma}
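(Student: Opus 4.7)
The plan is to derive this as a twisted version of Proposition~\ref{Bombierri-number field_3} (equivalently of \eqref{error_term_original}) via a standard Cauchy--Schwarz argument, in the spirit of Motohashi's approach to weighted error terms in the classical Bombieri--Vinogradov setting.

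First, I write $R(\mathfrak{a}) = R(\mathfrak{a})^{1/2}\cdot R(\mathfrak{a})^{1/2}$ and apply Cauchy--Schwarz to get
\[
\sum_{N(\mathfrak{a})\leq \frac{x^{1/2}}{(\log x)^B}}\widehat{\mu}^2(\mathfrak{a})\,\tau(N(\mathfrak{a}))^k\,2^{r\omega(N(\mathfrak{a}))}\,R(\mathfrak{a}) \;\leq\; S_1^{1/2}\,S_2^{1/2},
\]
where
\[
S_1 := \sum_{N(\mathfrak{a})\leq \frac{x^{1/2}}{(\log x)^B}} \widehat{\mu}^2(\mathfrak{a})\,\tau(N(\mathfrak{a}))^{2k}\,2^{2r\omega(N(\mathfrak{a}))}\,R(\mathfrak{a}),\qquad S_2 := \sum_{N(\mathfrak{a})\leq \frac{x^{1/2}}{(\log x)^B}} \widehat{\mu}^2(\mathfrak{a})\,R(\mathfrak{a}).
\]
For $S_2$, Proposition~\ref{Bombierri-number field_3} applied with any target exponent $A'>0$ gives $S_2 \ll x/(\log x)^{A'}$ provided $B=B(A')$ is chosen large enough.

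For $S_1$, I need a trivial pointwise bound on $R(\mathfrak{a})$ that saves a factor of $\Phi(\mathfrak{a})$. The term $\widetilde{g}(\mathfrak{a})\operatorname{li}(x) \ll x/(\Phi(\mathfrak{a})\log x)$ directly, and $\Pi'(x;2(1+i)\mathfrak{a},\alpha)$ is bounded by the same quantity via the Brun--Titchmarsh inequality for Gaussian prime ideals in arithmetic progressions modulo $2(1+i)\mathfrak{a}$, valid uniformly for $N(\mathfrak{a})\leq x^{1/2}$ so that $\log(x/N(\mathfrak{a}))\gg\log x$. Hence
\[
R(\mathfrak{a}) \ll \frac{x}{\Phi(\mathfrak{a})\,\log x},
\]
and therefore
\[
S_1 \ll \frac{x}{\log x}\sum_{\substack{\mathfrak{a}\text{ sq-free}\\ N(\mathfrak{a})\leq x^{1/2}}} \frac{\tau(N(\mathfrak{a}))^{2k}\,2^{2r\omega(N(\mathfrak{a}))}}{\Phi(\mathfrak{a})}.
\]
The remaining sum is a multiplicative sum over square-free ideals of $\mathbb{Z}[i]$, which I bound by the Euler product
\[
\prod_{N(\mathfrak{p})\leq x^{1/2}}\left(1 + \frac{\tau(N(\mathfrak{p}))^{2k}\,2^{2r}}{\Phi(\mathfrak{p})}\right) \ll (\log x)^{C_{k,r}}
\]
for some $C_{k,r}>0$ depending only on $k$ and $r$ (splitting the local factors according to whether $\mathfrak{p}$ lies over a split, inert, or ramified rational prime and comparing to the rational analogue). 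This yields $S_1 \ll x\,(\log x)^{C_{k,r}-1}$.

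Combining the two estimates gives
\[
\sum_{N(\mathfrak{a})\leq\frac{x^{1/2}}{(\log x)^B}}\widehat{\mu}^2(\mathfrak{a})\,\tau(N(\mathfrak{a}))^k\,2^{r\omega(N(\mathfrak{a}))}\,R(\mathfrak{a}) \ll \frac{x}{(\log x)^{(A'-C_{k,r}+1)/2}},
\]
and choosing $A' = 2A + C_{k,r} - 1$ with the corresponding $B=B(A',k,r)=B(A,k,r)$ furnished by Proposition~\ref{Bombierri-number field_3} produces the desired bound $\ll_{k,r} x/(\log x)^A$.

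The main technical point is the Brun--Titchmarsh inequality for prime ideals of $\mathbb{Z}[i]$ in arithmetic progressions; this follows from the large sieve inequality for imaginary quadratic fields (as in Johnson's work cited after Proposition~\ref{Bombierri-number field_2}) and is the only ingredient used beyond the bare Bombieri--Vinogradov estimate \eqref{error_term_original}. Everything else is a mechanical bookkeeping of multiplicative functions and Euler products.
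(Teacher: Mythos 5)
Your proof is correct and follows essentially the same route as the paper: Cauchy--Schwarz to split off the twisted weight into a factor that is bounded pointwise and summed via an Euler product, with the untwisted factor controlled by Proposition~\ref{Bombierri-number field_3}. One small difference: you invoke Brun--Titchmarsh for Gaussian prime ideals to get the pointwise bound $R(\mathfrak{a}) \ll x/(\Phi(\mathfrak{a})\log x)$, but the paper uses only the completely trivial bound $R(\mathfrak{a}) \ll x/\Phi(\mathfrak{a})$ (no $\log$ saving, available from the main term together with a lattice-point count for $\Pi'$), which already suffices since the final accounting absorbs any fixed power of $\log x$; so the Brun--Titchmarsh input you flag as ``the main technical point'' is not actually needed. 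A minor point to be careful about in your Euler product step: $\tau(N(\mathfrak{a}))$ and $2^{\omega(N(\mathfrak{a}))}$ are not multiplicative functions of the ideal $\mathfrak{a}$ (two conjugate prime ideals above a split $p$ produce $\tau(p^2)=3\neq\tau(p)^2$), so you need the elementary inequality $\tau(N(\mathfrak{a}))\leq\prod_{\mathfrak{p}\mid\mathfrak{a}}\tau(N(\mathfrak{p}))$ to justify it; the paper sidesteps this by converting to a sum over cube-free rational integers $d$ with a $\tau(d)$ multiplicity factor.
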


	\begin{proof}
First, observe that for any square-free ideal \( \mathfrak{a} \) with \( N(\mathfrak{a}) \leq x^{1/2} \), we have
\[
R(\mathfrak{a}) \ll \frac{x}{\Phi(\mathfrak{a})} \leq \frac{2^{2\omega(N(\mathfrak{a}))}}{N(\mathfrak{a})} x,
\]
where the second inequality uses the bound \( \Phi(\mathfrak{p}) \geq \frac{1}{2} N(\mathfrak{p}) \) for any prime ideal \( \mathfrak{p} \) in \( \mathbb{Z}[i] \).

Since the norm \( N(\mathfrak{a}) \) of a square-free ideal \( \mathfrak{a} \) is cube-free, we obtain
\[	\sum_{N(\mathfrak{a}) \leq \frac{x^{1/2}}{(\log x)^B}} 
\widehat{\mu}^2(\mathfrak{a})\, 
\tau(N(\mathfrak{a}))^{2k}\, 2^{2r \omega(N(\mathfrak{a}))}\, 
R(\mathfrak{a})  
\ll x \sum_{\substack{d < x \\ d \text{ cube-free}}}
\frac{\tau(d)^{2k+1} 2^{(2r+2)\omega(d)}}{d},\]
where the extra factor \( \tau(d) \) accounts for the number of ideals \( \mathfrak{a} \subset \mathbb{Z}[i] \) with \( N(\mathfrak{a}) = d \).

We estimate the sum on the right via an Euler product:
\begin{align*} 
x\sum_{\substack{d < x \\ d \text{ cube-free}}}
	\frac{\tau(d)^{2k+1} 2^{(2r+2)\omega(d)}}{d}
	&\leq x\prod_{p < x} \left( 1 + \frac{2^{2(k+r)+3}}{p} + \frac{3^{2k+1} 2^{2r+2}}{p^2} \right) \nonumber \\
	&\leq x \prod_{p < x} \left( 1 + \frac{3^{2k+2} 2^{2r+1}}{p} \right)
	\ll x(\log x)^{3^{2k+2} 2^{2r+1}}.
\end{align*}

Combining this with \eqref{error_term_original}, and applying the Cauchy–Schwarz inequality to \eqref{lemma_1_eq}, completes the proof.
\end{proof}
	
	\begin{comment}
	\begin{rem}
	Note that for \( N(\mathfrak{a}) = 2^e m \) with \( (2, m) = 1 \), we must have \( m \equiv 1 \pmod{4} \). Indeed, in later applications, we only need to sum over \( N(\mathfrak{a}) \) of the form \( N(\mathfrak{a}) = 2^e h l^2 \), where \( e = 0, 1 \), \( h \mid Q_1(x) \), and \(\ell\mid Q_2(x) \). Here, we prove a slightly stronger version for the sum of the error terms, where we sum over \( N(\mathfrak{a}) = 2^e m \) with \( m \equiv 1 \pmod{4} \).
	\end{rem}
	\end{comment}

Before proceeding with the estimate of the level of distribution, we include the following observation to clarify the relationship between the norm \( N(\mathfrak{a}) \) of a square-free ideal \( \mathfrak{a} \subset \mathbb{Z}[i] \) and its associated integer \( d(\mathfrak{a}) \).

  Let \( d > 0 \) be square-free with  
\[
d = 2^e \prod_{k=1}^{m} p_{k} \prod_{l=1}^{n} q_{l}, \quad e \in \{0,1\},
\]
where the \( p_k \) are distinct primes with \( p_k \equiv 1 \pmod{4} \), and the \( q_l \) are distinct primes with \( q_l \equiv 3 \pmod{4} \).

For any square-free ideal \( \mathfrak{a} \subset \mathbb{Z}[i] \) with \( d(\mathfrak{a}) = d \), a generator of \( \mathfrak{a} \) (up to units) must be of the form
\[
(1+i)^e \prod_{k=1}^{m} \pi_k^{e_k} \overline{\pi}_k^{f_k} \prod_{l=1}^{n} q_l,
\]
where \( 0 \leq e_k, f_k \leq 1 \) and \( 1 \leq e_k + f_k \leq 2 \).

Thus, the norm of such an ideal satisfies
\begin{equation}\label{bound_N(a)}
	d \leq N(\mathfrak{a}) \leq d^2,
\end{equation}
with equality on the left when all \( e_k + f_k = 1 \) and \( d \) has no prime factor  \( \equiv 3 \pmod{4} \), and equality on the right when all \( e_k + f_k = 2 \) and \( e = 0 \).

	\begin{lemma}\label{lemma2}
		Let \( r(d_1, d_2) \) be the error term given by \eqref{eq_R}. For any \( A > 0 \), there exists \( B = B(A) \) such that 
		\begin{equation}\label{lemma_error_2}
			\sum_{  (d_1 d_2)^2 \leq \frac{x^{1/2}}{(\log x)^B}} \mu^2(d_1) \mu^2(d_2)\tau(d_1 d_2)^4 |r(d_1, d_2)| 
			\ll \frac{x}{(\log x)^A}.
		\end{equation}
	\end{lemma}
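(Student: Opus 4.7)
The plan is to apply the triangle inequality to \eqref{eq_R}, change variables from the integer pair $(d_1, d_2)$ to the combined ideal $\mathfrak{a} := \mathfrak{a}_1 \mathfrak{a}_2 \subset \mathbb{Z}[i]$, and then invoke Lemma~\ref{lemma_error_term_1}, which has been set up precisely to absorb the kind of divisor-like weight that emerges after this change of variables.

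First, from \eqref{eq_R} together with the bound $|\widetilde{r}(\mathfrak{a}_1, \mathfrak{a}_2)| \leq R(\mathfrak{a}_1 \mathfrak{a}_2)$, the triangle inequality gives
\[
|r(d_1, d_2)| \;\leq\; \sum_{\substack{d(\mathfrak{a}_1) = d_1,\; d(\mathfrak{a}_2) = d_2 \\ \mathfrak{a}_1 \mathfrak{a}_2 \text{ square-free}}} R(\mathfrak{a}_1 \mathfrak{a}_2),
\]
since the factor $\widehat{\mu}(\mathfrak{a}_1 \mathfrak{a}_2)$ vanishes unless $\mathfrak{a}_1, \mathfrak{a}_2$ are both square-free and coprime. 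I would then interchange the order of summation: for each square-free ideal $\mathfrak{a} \subset \mathbb{Z}[i]$, the ordered factorizations $\mathfrak{a} = \mathfrak{a}_1\mathfrak{a}_2$ correspond to partitions of the set of prime ideal divisors of $\mathfrak{a}$, so there are exactly $2^{\omega(\mathfrak{a})} \leq 2^{2\omega(N(\mathfrak{a}))}$ of them (using that $N(\mathfrak{a})$ is cube-free when $\mathfrak{a}$ is square-free in $\mathbb{Z}[i]$).

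Next I would translate the range and weight $\tau(d_1 d_2)^4$ into ideal-theoretic data. By \eqref{bound_N(a)}, $N(\mathfrak{a}_i) \leq d(\mathfrak{a}_i)^2 = d_i^2$, so $N(\mathfrak{a}) = N(\mathfrak{a}_1)N(\mathfrak{a}_2) \leq (d_1 d_2)^2 \leq x^{1/2}/(\log x)^B$, matching the range in Lemma~\ref{lemma_error_term_1}. Since each $d(\mathfrak{a}_i)$ is square-free with $\omega(d(\mathfrak{a}_i)) = \omega(N(\mathfrak{a}_i))$, and since each rational prime dividing $N(\mathfrak{a})$ contributes at most $1$ to each of $\omega(N(\mathfrak{a}_1))$ and $\omega(N(\mathfrak{a}_2))$,
\[
\tau(d_1 d_2)^4 \;\leq\; \bigl[\tau(d_1)\tau(d_2)\bigr]^4 = 2^{4[\omega(N(\mathfrak{a}_1)) + \omega(N(\mathfrak{a}_2))]} \leq 2^{8\omega(N(\mathfrak{a}))}.
\]

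Combining the above,
\[
\sum_{(d_1 d_2)^2 \leq \frac{x^{1/2}}{(\log x)^B}} \mu^2(d_1)\mu^2(d_2)\tau(d_1 d_2)^4 |r(d_1,d_2)| \;\ll\; \sum_{N(\mathfrak{a}) \leq \frac{x^{1/2}}{(\log x)^B}} \widehat{\mu}^2(\mathfrak{a}) \cdot 2^{10\omega(N(\mathfrak{a}))}\, R(\mathfrak{a}),
\]
and Lemma~\ref{lemma_error_term_1} applied with $k=0$ and $r=10$ (after enlarging $A$ if needed) yields the desired bound $\ll x/(\log x)^A$. I expect no substantive obstacle beyond the combinatorial bookkeeping in the change of variables; all the analytic work has already been done inside Lemma~\ref{lemma_error_term_1}, which was designed exactly to handle the twisted Bombieri--Vinogradov sum that appears on the right.
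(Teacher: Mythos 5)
Your proof is correct and follows essentially the same path as the paper's: bound $|r(d_1,d_2)|$ by the triangle inequality, pass to the combined ideal $\mathfrak{a}=\mathfrak{a}_1\mathfrak{a}_2$ with a $2^{2\omega(N(\mathfrak{a}))}$ multiplicity for the coprime factorizations, verify $N(\mathfrak{a})\leq x^{1/2}/(\log x)^B$, and invoke Lemma~\ref{lemma_error_term_1}. The only cosmetic difference is in the weight bookkeeping: the paper uses $\tau(d_1d_2)^4\leq\tau(N(\mathfrak{a}))^4$ and applies the lemma with $k=4$, $r=2$, whereas you replace $\tau(d_1d_2)^4$ by $2^{8\omega(N(\mathfrak{a}))}$ and take $k=0$, $r=10$; both are admissible choices in that lemma.
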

	
\begin{proof}
	By \eqref{eq_R}, for \( d_1 \mid P(z_1) \) and \( d_2 \mid P(z_2) \), we have
\[	r(d_1, d_2) \ll 
\sum_{\substack{d(\mathfrak{a}_1) = d_1 \\ d(\mathfrak{a}_2) = d_2}} 
\widehat{\mu}^2(\mathfrak{a}_1 \mathfrak{a}_2)\, R(\mathfrak{a}_1 \mathfrak{a}_2).\]
	
	Therefore,
	\begin{align*}
		\sum_{\substack{d_1, d_2 \text{ square-free} \\ (d_1 d_2)^2 \leq \frac{x^{1/2}}{(\log x)^B}}}    
		\tau(d_1 d_2)^4\, |r(d_1, d_2)| 
		&\ll \sum_{\substack{d_1, d_2 \text{ square-free} \\ (d_1 d_2)^2 \leq \frac{x^{1/2}}{(\log x)^B}}} 
		\tau(d_1 d_2)^4  
		\sum_{\substack{d(\mathfrak{a}_1) = d_1\\ d(\mathfrak{a}_2) = d_2}}
		\widehat{\mu}^2(\mathfrak{a}_1 \mathfrak{a}_2)\, R(\mathfrak{a}_1 \mathfrak{a}_2)  \\[5pt]
		&\leq \sum_{\substack{d_1, d_2 \text{ square-free} \\ (d_1 d_2)^2 \leq \frac{x^{1/2}}{(\log x)^B}}} 
		\sum_{\substack{d(\mathfrak{a}_1) = d_1\\ d(\mathfrak{a}_2) = d_2}}
		\widehat{\mu}^2(\mathfrak{a}_1 \mathfrak{a}_2)\, 
		\tau(N(\mathfrak{a}_1 \mathfrak{a}_2))^4\, R(\mathfrak{a}_1 \mathfrak{a}_2) \\[5pt]
		&\leq \sum_{d(\mathfrak{a})^2 \leq \frac{x^{1/2}}{(\log x)^B}} 
		\widehat{\mu}^2(\mathfrak{a})\, 
		\tau(N(\mathfrak{a}))^4\, 2^{2\omega(N(\mathfrak{a}))} R(\mathfrak{a}) \\[5pt]
		&\leq \sum_{N(\mathfrak{a}) \leq \frac{x^{1/2}}{(\log x)^B}} 
		\widehat{\mu}^2(\mathfrak{a})\, 
		\tau(N(\mathfrak{a}))^4\, 2^{2\omega(N(\mathfrak{a}))} R(\mathfrak{a}),
	\end{align*}
where in the third inequality, we use that the number of factorizations \( \mathfrak{a} = \mathfrak{a}_1 \mathfrak{a}_2 \) is at most \( 2^{2\omega(N(\mathfrak{a}))} \).
	
	The result now follows from Lemma~\ref{lemma_error_term_1}.
\end{proof}
\subsection{Estimation of the sieving functions}
Hence, in Proposition~\ref{prop_vector sieve}, the remainder term is sufficiently bounded by taking \( D = x^{1/4 - \epsilon} \) for some small \( \epsilon > 0 \). We are now in a position to apply the vector sieve to the sieving functions.

\begin{prop}\label{prop_W_1}
	Let \( \epsilon > 0 \) be arbitrarily small, and set \( \alpha = \frac{1}{4} - \epsilon \). Suppose \( 0 < \theta_2 \leq \theta_1 < \frac{1}{2} \), with \( \theta_1 < \delta_1 < \alpha \) and \( \theta_2 < \delta_2 < \alpha \). Define \[
	z_1 = x^{\theta_1}, \quad z_2 = x^{\theta_2}, \quad y_1 = x^{\delta_1}, \quad y_2 = x^{\delta_2}.
	\]
	Then, for sufficiently large \( x \), we have:
	\begin{align*}
		|S(\mathcal{W}; z_1, z_2)| 
		&\geq \left(C + o(1)\right) \operatorname{li}(x) V(z_1) V(z_2) f\left( \frac{\alpha}{\theta_1}, \frac{\alpha}{\theta_2} \right), \\
		\sum_{z_1 \leq p \leq y_1} w_p(y_1) \left| S(\mathcal{W}_{p,1}; z_1, z_2) \right| 
		&\leq \left(C + o(1)\right) \operatorname{li}(x) V(z_1) V(z_2) I_1(\delta_1, \theta_1, \theta_2), \\
		\sum_{z_2 \leq p \leq y_2} w_p(y_2) \left| S(\mathcal{W}_{1,p}; z_1, z_2) \right| 
		&\leq \left(C + o(1)\right) \operatorname{li}(x) V(z_1) V(z_2) I_2(\delta_2, \theta_1, \theta_2),
	\end{align*}
	where
	\[
	C = \frac{1}{2} \prod_{p \equiv 3 \pmod{4}} \left(1 - \frac{2}{p^2 - 1}\right) 
	\prod_{p \equiv 1 \pmod{4}} \left(1 - \frac{3p - 1}{(p - 1)^3} \right),
	\]
	\[
	V(z) = \prod_{\substack{p < z \\ p \equiv 1 \pmod{4}}} \left(1 - \frac{1}{p} \right)^2 \sim \frac{c^2}{\log z}, \quad \text{for some constant } c > 0,
	\]
	\[
	I_i(\delta_i, \theta_1, \theta_2) 
	= \int_{\theta_i}^{\delta_i} \left(1 - \frac{t}{\delta_i} \right) 
	F\left( \frac{\alpha - t}{\theta_1}, \frac{\alpha - t}{\theta_2} \right) \frac{dt}{t}, 
	\quad \text{for } i = 1, 2,
	\]
	and the functions \( F \) and \( f \) are the upper and lower bound functions for the vector sieve, given in Proposition~\ref{prop_vector sieve}.
\end{prop}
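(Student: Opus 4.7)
The plan is to invoke Proposition~\ref{prop_vector sieve} with the multiset $\mathcal{W}$, sieve level $D = x^{\alpha}$ (the level-of-distribution provided by Lemma~\ref{lemma2}), and ancillary threshold $z_0 := \exp((\log z_1 z_2)^{1/3})$. I first verify the input hypotheses. Conditions~\eqref{W_d_condition1} and~\eqref{condition_h} are immediate from~\eqref{def:h_values}; for the linear-sieve hypothesis~\eqref{linear_sieve_condition} on $h_1(d)=h(d,1)$ and $h_2(d)=h(1,d)$, the estimates $h_i(p)\sim 2/(p-1)$ at split primes $p\equiv 1\pmod 4$ and $h_i(p)=O(1/p^2)$ at inert primes, combined with Mertens for the progression $1\pmod 4$, yield $\sum_{w\leq p<z}h_i(p)=\log(\log z/\log w)+O(1/\log w)$, confirming a linear (dimension one) sieve. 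Proposition~\ref{prop_vector sieve} then bounds $|S(\mathcal{W};z_1,z_2)|$ and its $p$-weighted variants by $\operatorname{li}(x)\,V(z_0,h^*)V_1V_2\cdot\{f\ \text{or}\ F\}(\sigma_1,\sigma_2)(1+o(1))$ plus a sum of remainders, where $\sigma_i=\alpha/\theta_i$. The remainder is $O(x/(\log x)^A)$ for any $A$ by Lemma~\ref{lemma2}, which is negligible against the expected main term of order $x/(\log x)^3$.

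Next I rewrite the main factor $V(z_0,h^*)V_1V_2$ in the form $C\cdot V(z_1)V(z_2)$. Using~\eqref{def:h_values}, the factor at $p=2$ is $1-h^*(2)=1/2$; at inert primes the three products combine into the convergent Euler product $\prod_{p\equiv 3\,(\mathrm{mod}\,4)}(1-2/(p^2-1))$, because $(1-h_1(p))(1-h_2(p))$ differs from $(1-h^*(p))$ by $O(1/p^4)$ at such primes and all boundary tails decay. At split primes one has $(1-h^*(p))=((p-3)/(p-1))^2$, $(1-h_i(p))=((p-2)/(p-1))^2$, while $V(z_i)$ contributes $(1-1/p)^2$; collecting these by the prime ranges $p<z_0$, $z_0\leq p<z_2$, $z_2\leq p<z_1$ and invoking Mertens for primes $\equiv 1\pmod 4$, the boundary contributions at $z_0,z_1,z_2$ collapse to $1+o(1)$ since each tail is a product of terms $1+O(1/p^2)$, and the remaining Euler product at split primes produces the factor $(1-(3p-1)/(p-1)^3)$, giving the stated constant $C$.

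For the weighted sums I apply Proposition~\ref{prop_vector sieve} to the reduced multiset $\mathcal{W}_{p,1}$ for each $p\in[z_1,y_1]$. By multiplicativity of $h$, for $p\nmid d_1d_2$ one has $|(\mathcal{W}_{p,1})_{d_1,d_2}|=h(p,1)\,h(d_1,d_2)\operatorname{li}(x)+r(pd_1,d_2)$, so the sieve applies with base size $h(p,1)\operatorname{li}(x)$ and level of distribution $D/p$. Writing $t_p=\log p/\log x$, this yields
\[
|S(\mathcal{W}_{p,1};z_1,z_2)|\leq h(p,1)\operatorname{li}(x)\,V(z_0,h^*)V_1V_2\,F\!\Bigl(\tfrac{\alpha-t_p}{\theta_1},\,\tfrac{\alpha-t_p}{\theta_2}\Bigr)(1+o(1)),
\]
with the $p$-dependent error again controlled by Lemma~\ref{lemma2} since $p\,d_1d_2<x^{\alpha}$. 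Weighting by $w_p(y_1)=1-t_p/\delta_1$ and summing: the factor $h(p,1)\sim 2/(p-1)$ at split primes cancels the density $1/2$ of primes $\equiv 1\pmod 4$ (via the PNT in arithmetic progressions, with $p=x^t$, $dp\sim x^t\log x\,dt$), and the sum converts to
\[
\int_{\theta_1}^{\delta_1}\!\Bigl(1-\tfrac{t}{\delta_1}\Bigr)F\!\Bigl(\tfrac{\alpha-t}{\theta_1},\,\tfrac{\alpha-t}{\theta_2}\Bigr)\frac{dt}{t}=I_1(\delta_1,\theta_1,\theta_2).
\]
The same argument with $\mathcal{W}_{1,p}$ for $p\in[z_2,y_2]$ produces the $I_2$ bound. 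The main obstacle is the middle step: extracting the explicit Euler constant $C$ from $V(z_0,h^*)V_1V_2/(V(z_1)V(z_2))$ via careful Mertens-type bookkeeping in the arithmetic progression $1\pmod 4$; everything else is routine once the vector sieve and the level-of-distribution estimate of Lemma~\ref{lemma2} are in hand.
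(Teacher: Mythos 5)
Your proposal follows essentially the same route as the paper: verify the hypotheses of Proposition~\ref{prop_vector sieve}, apply it with $D = x^{1/4-\epsilon}$ (justified by Lemma~\ref{lemma2}), rewrite $V(z_0,h^*)V_1V_2$ as $C\,V(z_1)V(z_2)(1+o(1))$ by comparing Euler factors across the three ranges $p<z_0$, $z_0\leq p<z_2$, $z_2\leq p<z_1$, and then treat the weighted sums by applying the upper-bound sieve to $\mathcal{W}_{p,1}$ (resp.\ $\mathcal{W}_{1,p}$) with level $D/p$ and converting the sum over $p$ into the integrals $I_1,I_2$ via Mertens in the progression $1\pmod 4$. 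The paper packages the partial-summation step through $H(w)=\sum_{p\leq w}h(p,1)=\log\log w+c_1+O(1/\log w)$, while you phrase it via the PNT density of split primes cancelling $h(p,1)\sim 2/(p-1)$; these are equivalent, so the argument is sound.
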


	\begin{proof}
	By Lemma~\ref{lemma2}, the error term in \eqref{equation_W_{d1,d2}} is sufficiently bounded if we take \( D = x^{1/4 - \epsilon} \). The   function \( h(d_1,d_2) \), given in \eqref{def:h_values}, satisfies the condition~\eqref{condition_h}, and both \( h_1(d) := h(d,1) \) and \( h_2(d) := h(1,d) \) satisfy the linear sieve condition~\eqref{linear_sieve_condition}, as can be verified via Mertens' estimates.
	
	From \eqref{Def: hstar}, we have
	\[
	h^*(p) = h(1, p) + h(p, 1) - h(p, p).
	\]
	Using the values of \( h \) from \eqref{def:h_values} and the definitions in \eqref{definition_Vi}, the Euler factors in \( V(z_0, h^*) V_1 V_2 \) are computed as follows:
	\[
	\begin{cases}
		\frac{1}{2}, & \text{if } p = 2, \\[6pt]
		1 - \frac{2}{p^2 - 1}, & \text{if } 2 < p < z_0,\; p \equiv 3 \pmod{4}, \\[6pt]
		\left(1 - \frac{2}{p - 1}\right)^2 = \left(1 - \frac{3p - 1}{(p - 1)^3} \right) \left(1 - \frac{1}{p}\right)^4, & \text{if } 2 < p < z_0,\; p \equiv 1 \pmod{4}, \\[6pt]
		\left(1 - \frac{1}{p^2 - 1}\right)^2, & \text{if } z_0 \leq p < z_2,\; p \equiv 3 \pmod{4}, \\[6pt]
		\left(1 - \frac{1}{p - 1}\right)^4 = \left(1 - \frac{1}{(p - 1)^2}\right)^4 \left(1 - \frac{1}{p}\right)^4, & \text{if } z_0 \leq p < z_2,\; p \equiv 1 \pmod{4}, \\[6pt]
		1 - \frac{1}{p^2 - 1}, & \text{if } z_2 \leq p < z_1,\; p \equiv 3 \pmod{4}, \\[6pt]
		\left(1 - \frac{1}{p - 1}\right)^2 = \left(1 - \frac{1}{(p - 1)^2}\right)^2 \left(1 - \frac{1}{p}\right)^2, & \text{if } z_2 \leq p < z_1,\; p \equiv 1 \pmod{4}.
	\end{cases}
	\]
	
Thus, \( V(z_0, h^*) V_1 V_2 \sim C V(z_1) V(z_2) \), where
\[
C := \frac{1}{2} \prod_{p \equiv 3 \pmod{4}} \left(1 - \frac{2}{p^2 - 1}\right) 
\prod_{p \equiv 1 \pmod{4}} \left(1 - \frac{3p - 1}{(p - 1)^3} \right),
\]
and
\[
V(z) := \prod_{\substack{p < z \\ p \equiv 1 \pmod{4}}} \left(1 - \frac{1}{p} \right)^2 
\sim \frac{c^2}{\log z},
\]
for some constant \( c > 0 \), by Mertens' theorem in progression. 

   Applying Proposition~\ref{prop_vector sieve} to \( \mathcal{W} \) with $D=x^\alpha$, we obtain 
\[
|S(\mathcal{W}, z_1, z_2) |
\geq \left(C + o(1)\right) \operatorname{li}(x)\, V(z_1)\, V(z_2)\, 
f\left(\frac{\alpha}{\theta_1}, \frac{\alpha}{\theta_2}\right).
\]

	Next, we apply the upper bound vector sieve to the set \( \mathcal{W}_{p,1} \) for \( z_1 \leq p \leq y_1 = x^{\delta_1} \).
	
	Since \( d_1 \mid P(z_1) \), \( d_2 \mid P(z_2) \), and \( p \geq z_1 \), we have \( (p, d_1) = 1 \), so
	\[
	h(pd_1, d_2) = h(p,1)\, h(d_1, d_2).
	\]
Applying Proposition~\ref{prop_vector sieve} to \( \mathcal{W}_{p,1} \) with \( D = x^{1/4 - \epsilon} / p \), we obtain
\[
|S(\mathcal{W}_{p,1}, z_1, z_2) |
\leq \left(C + o(1)\right) \operatorname{li}(x)\, V(z_1)\, V(z_2)\, h(p, 1)\, F\left(s_1(p), s_2(p)\right),
\]
where
\[
s_i(p) := \frac{\log \left(x^{1/4 - \epsilon} / p \right)}{\log z_i}, \quad i = 1, 2.
\]
Summing over \( z_1 \leq p \leq y_1 \), we obtain
\[
\sum_{z_1 \leq p \leq y_1} w_p(y_1)\, \left| S(\mathcal{W}_{p,1}, z_1, z_2) \right|
\leq (C+o(1)) \operatorname{li}(x) V(z_1) V(z_2) \left(\sum_{z_1 \leq p \leq y_1} \left(1 - \frac{\log p}{\log y_1}\right)h(p, 1) F\left(s_1(p), s_2(p)\right)\right)  .
\]
Define
\[
H(w) := \sum_{p \leq w} h(p,1) = \log \log w + c_1 + O\left( \frac{1}{\log w} \right) \quad \text{for some constant } c_1,
\]
and set
\[
\sigma_i(w) := \frac{\log (x^{1/4 - \epsilon} / w)}{\log z_i}, \quad i = 1, 2.
\]
Then we have
\begin{align*}
	&\sum_{z_1 \leq p \leq y_1} w_p(y_1) \left| S(\mathcal{W}_{p,1}, z_1, z_2) \right|\\
	\leq &\left(C + o(1)\right) \operatorname{li}(x) V(z_1) V(z_2)
	\int_{z_1}^{y_1} \left(1 - \frac{\log w}{\log y_1} \right)
	F(\sigma_1(w), \sigma_2(w)) \, d(H(w)) \\
	= &\left(C + o(1)\right) \operatorname{li}(x) V(z_1) V(z_2)
	\int_{z_1}^{y_1} \left(1 - \frac{\log w}{\log y_1} \right)
	F(\sigma_1(w), \sigma_2(w)) \, d(\log \log w) \\
	&\quad + O\left(\frac{\max\left(F(\sigma_1(y_1), \sigma_2(y_1)), F(\sigma_1(y_2), \sigma_2(y_2))\right)}{\log z_1}\right),
\end{align*}
where the \( O \)-term is of size \( O\left(\frac{1  }{\log z_1}\right) \), since the function \( F  \) is bounded. 
Let \( \alpha = \frac{1}{4} - \epsilon \). We change variables in the integral to obtain
\[
\int_{z_1}^{y_1} \left(1 - \frac{\log w}{\log y_1}\right) 
F(\sigma_1(w), \sigma_2(w)) \frac{dw}{w \log w}
= \int_{\theta_1}^{\delta_1} \left(1 - \frac{t}{\delta_1} \right)
F\left( \frac{\alpha - t}{\theta_1}, \frac{\alpha - t}{\theta_2} \right)
\frac{dt}{t}.
\]

The estimate for 
\[
\sum_{z_2 \leq p \leq y_2} w_p(y_2) \left| S(\mathcal{W}_{1,p}, z_1, z_2) \right|
\]
follows similarly.
	\end{proof}

	Recall that 
	\[
	W(x ) := |S(\mathcal{W}; z_1, z_2)| - \lambda \sum_{z_1 \leq p \leq y_1} w_p(y_1) |S(\mathcal{W}_{p,1}, z_1, z_2)| 
	- \lambda \sum_{z_2 \leq p \leq y_2} w_p(y_2) |S(\mathcal{W}_{1,p}, z_1, z_2)|.
	\]
By Propositions~\ref{prop_W_1},   for sufficiently large \( x \),
\[
W(x) \geq H(\lambda, \delta_1, \delta_2, \theta_1, \theta_2) \, C \, V(x^{\theta_1}) V(x^{\theta_2}) \operatorname{li}(x),
\]
where
\[
H(\lambda, \delta_1, \delta_2, \theta_1, \theta_2) = f\left(\frac{\alpha}{\theta_1}, \frac{\alpha}{\theta_2}\right)
- \lambda \left( I_1(\delta_1, \theta_1, \theta_2) + I_2(\delta_2, \theta_1, \theta_2) \right),
\quad \text{with } \alpha = \frac{1}{4} - \epsilon.
\]
We choose parameters such that
\[
H(\lambda, \delta_1, \delta_2, \theta_1, \theta_2) > 0
\quad \text{and} \quad
\frac{1}{\lambda} + \frac{1}{\delta_1} + \frac{1}{\delta_2} \text{ is as small as possible.}
\]	
Taking \( \alpha = \frac{1}{4.01} \), we find
	\[
	H\left(\frac{1}{2.4}, \frac{1}{4.2}, \frac{1}{4.3}, \frac{1}{30}, \frac{1}{31}\right) = 0.1274,
	\]
	which implies that in ~\eqref{omega(a)+omega(b)},
\[
\omega(a) + \omega(b) \leq \left\lfloor
\frac{1}{\lambda} + \frac{1}{\delta_1} + \frac{1}{\delta_2}
\right\rfloor = 10, \quad z_1 = x^{1/30}, \quad z_2 = x^{1/31}.
\]
	This completes the proof of the first part of Theorem~\ref{theorem2_intro}.

To prove the second part of the theorem, it suffices to show that the contribution from those primes \( p \leq x \) counted in \( W(x) \) for which
\(\left( \frac{|E(\mathbb{F}_p)|}{8}, \frac{|E(\mathbb{F}_{p^2})|}{4|E(\mathbb{F}_p)|} \right) \neq 1\)
is negligible.

Suppose that
\[
q \biggm| \left( \frac{|E(\mathbb{F}_p)|}{8}, \frac{|E(\mathbb{F}_{p^2})|}{4|E(\mathbb{F}_p)|} \right)
\]
for some prime \( q > z_2 = x^{\theta_2} \). Such a prime must satisfy \( q \equiv 1 \pmod{4} \), since
\[
\left( \frac{\pi - 1}{2(1+i)}, \frac{\pi + 1}{2} \right) = (1),
\]
as noted on p.~\pageref{coprime_condition}.

Write \( q = \pi_q \overline{\pi_q} \) in \( \mathbb{Z}[i] \), and without loss of generality, assume
\begin{equation}\label{divide_condition}
	\pi_q \mid \pi - 1 \quad \text{and} \quad \overline{\pi_q} \mid \pi + 1, \quad \text{where } N(\pi) = p,
\end{equation}
which   implies
\[
\overline{\pi_q} \mid \overline{\pi} - 1 \quad \text{and} \quad \pi_q \mid \overline{\pi} + 1.
\]
Note that \( \pi \) is a root of the equation \( x^2 - a_p x + p = 0 \), so we have
\[
\pi_q\mid (\pi-1)+(\pi+1) =a_p .
\]
Since \( \operatorname{Re}(\pi) = \frac{a_p}{2} \in \mathbb{Z} \), and by Hasse’s bound \( |a_p| \leq 2\sqrt{p} \), we conclude that \( q \leq \sqrt{p} \).

By the Chinese Remainder Theorem, there exists a unique \( \alpha \in (\mathbb{Z}[i]/\pi_q \overline{\pi_q})^\times \) such that~\eqref{divide_condition} is equivalent to
\[
\pi \equiv \alpha \pmod{\pi_q \overline{\pi_q}}.
\]
We may choose a representative \( \alpha \in \mathbb{Z}[i] \) with \( |\alpha| < |\pi_q \overline{\pi_q}| = q \). Therefore,
\[
q^2 \mid N(\pi - \alpha) \leq (|\pi| + |\alpha|)^2 \leq (2\sqrt{x})^2 = 4x.
\]
Hence, for each such \( q \), the number of primes \( p \) satisfying
\[
q \mid \frac{|E(\mathbb{F}_p)|}{8} \quad \text{and} \quad q \mid \frac{|E(\mathbb{F}_{p^2})|}{4|E(\mathbb{F}_p)|}
\]
is bounded by the number of prime elements \( \pi \in \mathbb{Z}[i] \) such that \( N(\pi - \alpha) \leq 4x \) and \( q^2 \mid N(\pi - \alpha) \). 

Since \( \alpha \) is fixed for each \( q \), this count is bounded by the number of elements \( \beta \in \mathbb{Z}[i] \) with \( N(\beta) \leq 4x \) and \( q^2 \mid N(\beta) \), which is
\[
\ll \sum_{\substack{n \leq 4x \\ q^2 \mid n}} \tau(n).
\]

Therefore, summing over \( x^{\theta_2} < q \leq x^{1/2} \), the total number of such primes \( p \) is bounded by
\[
\sum_{x^{\theta_2} < q  \leq  x^{1/2}} \sum_{\substack{n \leq 4x \\ q^2 \mid n}} \tau(n)
\ll \sum_{x^{\theta_2}   \leq  x^{1/2}} \frac{x}{q^2} \log \frac{x}{q^2}
\ll x^{1 - \theta_2} \log x.
\]
This shows that the contribution from such primes is negligible, thereby completing the proof of Theorem~\ref{theorem1_intro}.

We now restate Theorem~\ref{theorem1_intro} below for convenience.

\begin{theorem}[Theorem~\ref{theorem1_intro}]\label{almost_triple}
	Let \( E/\mathbb{Q} \) be the elliptic curve defined by \( y^2 = x^3 - x \). Then, for sufficiently large \( x \), we have
	\[
	\left\{ 
	p \leq x \;\middle|\; 
	p \equiv 1 \pmod{4},\ 
	\frac{|E(\mathbb{F}_p)|}{8} = P_k(x^{1/30}),\ 
	\frac{|E(\mathbb{F}_{p^2})|}{4|E(\mathbb{F}_p)|} = P_{10-k}(x^{1/31}),\ 
	1 \leq k \leq 9 
	\right\}
	\gg \frac{x}{(\log x)^3},
	\]
	where \( P_m(z) \) denotes a product of at most \( m \) distinct primes, each   \( >z \).
	
	Moreover, one can additionally require that
	\[
	\left( 
	\frac{|E(\mathbb{F}_p)|}{8},\ 
	\frac{|E(\mathbb{F}_{p^2})|}{4|E(\mathbb{F}_p)|} 
	\right) = 1,
	\]
	in which case we obtain
\[		\left\{ 
p \leq x \;\middle|\; 
p \equiv 1 \pmod{4},\ 
\frac{|E(\mathbb{F}_{p^2})|}{32} = P_{10} 
\right\}
\gg \frac{x}{(\log x)^3},\]
	where \( P_{10} \) denotes a product of at most 10 distinct primes.
\end{theorem}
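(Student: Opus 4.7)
The plan is to apply the weighted vector sieve of linear-linear type (Proposition \ref{prop_vector sieve}) to the multiset
\[
\mathcal{W} = \left\{\bigl(N\bigl(\tfrac{\pi-1}{2(1+i)}\bigr),\,N\bigl(\tfrac{\pi+1}{2}\bigr)\bigr) \,\Big|\, N(\pi)=p\leq x,\ \pi\in\mathbb{Z}[i]\text{ primary}\right\},
\]
following the set-up already established. First I would verify the local densities: after applying Möbius inversion on rational integers and on ideals in $\mathbb{Z}[i]$ (Lemma \ref{lemma_Mobius} combined with \eqref{order_summation}), I obtain the representation $|\mathcal{W}_{d_1,d_2}|=h(d_1,d_2)\operatorname{li}(x)+r(d_1,d_2)$ with $h$ multiplicative and given by the explicit values in \eqref{def:h_values}. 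One must check that $h$ satisfies the compatibility conditions \eqref{W_d_condition1} and \eqref{condition_h}, and that $h_1(d),h_2(d)$ satisfy the linear sieve hypothesis \eqref{linear_sieve_condition}; both hold by Mertens' theorem in progressions.

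Next I would establish the level of distribution $D=x^{1/4-\epsilon}$. By Lemma \ref{lemma2}, which combines the Huxley--Bombieri--Vinogradov estimate (Proposition \ref{Bombierri-number field_3}) with a Cauchy--Schwarz argument and the relation \eqref{bound_N(a)} between $N(\mathfrak{a})$ and $d(\mathfrak{a})$, the remainder
\[
\sum_{d_1d_2<D^{1+\epsilon}}\tau(d_1d_2)^4|r(d_1,d_2)|\ll x/(\log x)^A
\]
is admissible for this choice of $D$. Plugging into Proposition \ref{prop_vector sieve} and computing the Euler product $V(z_0,h^*)V_1V_2\sim C\,V(z_1)V(z_2)$ with $V(z)\asymp(\log z)^{-2}$ gives the bounds in Proposition \ref{prop_W_1}. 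Then, forming the weighted expression $W(x)$ in \eqref{omega(a)+omega(b)} and optimizing numerically with the parameters $\alpha=1/4.01$, $\theta_1=1/30$, $\theta_2=1/31$, $\delta_1=1/4.2$, $\delta_2=1/4.3$, $\lambda=1/2.4$, one checks that
\[
H(\lambda,\delta_1,\delta_2,\theta_1,\theta_2)=f\bigl(\tfrac{\alpha}{\theta_1},\tfrac{\alpha}{\theta_2}\bigr)-\lambda\bigl(I_1+I_2\bigr)>0,
\]
so that $W(x)\gg x/(\log x)^3$. Since $\lfloor 1/\lambda+1/\delta_1+1/\delta_2\rfloor=10$, this yields the first part of the theorem with $\omega(a)+\omega(b)\leq 10$ and both factors square-free with no prime factor below $x^{\theta_i}$, which is exactly the $P_k(x^{1/30})\cdot P_{10-k}(x^{1/31})$ statement.

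For the second (coprimality) part, I would bound the contribution from primes $p$ counted by $W(x)$ for which some prime $q>x^{\theta_2}$ divides both $|E(\mathbb{F}_p)|/8$ and $|E(\mathbb{F}_{p^2})|/(4|E(\mathbb{F}_p)|)$. Any such $q$ is $\equiv 1\pmod 4$ (the two Gaussian factors are coprime by the primary condition as noted on p.~\pageref{coprime_condition}), so $q=\pi_q\overline{\pi_q}$; assuming $\pi_q\mid\pi-1$ and $\overline{\pi_q}\mid\pi+1$, subtracting gives $\pi_q\mid a_p$, and Hasse's bound $|a_p|\leq 2\sqrt p$ forces $q\leq\sqrt p$. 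The Chinese Remainder Theorem produces $\alpha$ with $|\alpha|<q$ and $\pi\equiv\alpha\pmod{q}$, so $q^2\mid N(\pi-\alpha)\ll x$; summing the divisor function over $n\leq 4x$ with $q^2\mid n$ and over $x^{\theta_2}<q\leq x^{1/2}$ yields a total bound $\ll x^{1-\theta_2}\log x$, which is negligible against $x/(\log x)^3$. Combined with the first part, this gives $|E(\mathbb{F}_{p^2})|/32=P_{10}$ for the surviving primes.

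The main obstacle is the numerical optimization: the level $x^{1/4-\epsilon}$ is rather small (in contrast to the $x^{1/2-\epsilon}$ one would have in the classical linear sieve for $\pi(x;q,a)$), so $\sigma_i=\alpha/\theta_i$ can only be pushed up to roughly $7$--$8$ before the weighted sieve parameters $\delta_i$ collide with $\alpha$. One must simultaneously keep $\sigma_i$ large enough that $f(\sigma_1,\sigma_2)>0$ in the combinatorial vector sieve, $\lambda$ large enough to force $\omega(a)+\omega(b)\leq 10$, yet small enough that $\lambda(I_1+I_2)<f(\sigma_1,\sigma_2)$. The verification is therefore a numerical computation of the two-variable functions $F,f$ from Proposition \ref{prop_vector sieve} at the chosen parameters; the remainder of the argument is structural.
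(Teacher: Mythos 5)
Your proposal follows the paper's proof essentially line by line: the same multiset $\mathcal{W}$, the same double Möbius inversion reducing norm-divisibility to ideal-divisibility in $\mathbb{Z}[i]$, the same Huxley/Bombieri--Vinogradov input giving level $x^{1/4-\epsilon}$, the same application of Proposition~\ref{prop_vector sieve} together with the weight function $w_p(y)$, the identical numerical parameters $(\alpha,\theta_1,\theta_2,\delta_1,\delta_2,\lambda)=(1/4.01,1/30,1/31,1/4.2,1/4.3,1/2.4)$ giving $H>0$ and $\lfloor 1/\lambda+1/\delta_1+1/\delta_2\rfloor=10$, and the same common-factor bound for the second assertion. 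The one slip is the phrase ``subtracting gives $\pi_q\mid a_p$'': the correct step is to conjugate $\overline{\pi_q}\mid\pi+1$ to get $\pi_q\mid\overline\pi+1$ and then add $\pi_q\mid(\pi-1)+(\overline\pi+1)=\pi+\overline\pi=a_p$, but this is a trivial fix and does not affect the argument.
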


 \subsection{Group Structures of \( E(\mathbb{F}_p) \) and \( E(\mathbb{F}_{p^2}) \)}\label{group_structure}
 
 Theorem~\ref{almost_triple} implies a lower bound for the number of primes \( p \leq x \) for which both \( E(\mathbb{F}_p) \) and \( E(\mathbb{F}_{p^2}) \) contain a cyclic subgroup of finite index, where the index is determined by the injection of torsion points.
 
 We now analyze the \( 2 \)-primary subgroups of \( E(\mathbb{F}_p) \) and \( E(\mathbb{F}_{p^2}) \), and determine the full group structures of \( E(\mathbb{F}_p) \) and \( E(\mathbb{F}_{p^2}) \) for the primes counted in Theorem~\ref{almost_triple}.

\begin{theorem}[Theorem~\ref{theorem2_intro}]\label{theorem_group_structure}
	Let \( E/\mathbb{Q} \) be the elliptic curve given by \( y^2 = x^3 - x \). Then, for sufficiently large \( x \),
\[
\# \left\{ 
p \leq x :\,
E(\mathbb{F}_p) \cong \mathbb{Z}/2  \times \mathbb{Z}/4m ,\ 
E(\mathbb{F}_{p^2}) \cong \mathbb{Z}/4  \times \mathbb{Z}/8mn ,\ \mu (mn) \neq 0 
,\ \omega(m n) \leq 10
\right\}
\gg \frac{x}{(\log x)^3}.
\]

\end{theorem}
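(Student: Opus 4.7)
The plan is to derive the group structures directly from the CM-module description of $E(\mathbb{F}_{p^r})$ and then use Theorem~\ref{almost_triple} to pin down the odd part. Because $E:y^2=x^3-x$ has CM by $\mathbb{Z}[i]$, for every split prime $p$ with primary Frobenius $\pi$ one has an isomorphism of $\mathbb{Z}[i]$-modules
\[
E(\mathbb{F}_{p^r})\;\cong\;\mathbb{Z}[i]/(\pi^r-1),
\]
which follows from $T_\ell E$ being free of rank one over $\mathbb{Z}[i]\otimes\mathbb{Z}_\ell$ for each $\ell\neq p$. Using $\pi\equiv 1\pmod{2(1+i)}$, I would write $\pi-1=2(1+i)\alpha$ and $\pi+1=2\beta$ with $\beta:=1+(1+i)\alpha$; then $N(\alpha)=|E(\mathbb{F}_p)|/8=m$, $N(\beta)=|E(\mathbb{F}_{p^2})|/(4|E(\mathbb{F}_p)|)=n$, and $\pi^2-1$ equals $(1+i)^5\alpha\beta$ up to a unit.

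Next I would exploit the conclusions of Theorem~\ref{almost_triple} under its ``moreover'' clause: $mn$ is square-free, $\gcd(m,n)=1$, and every prime divisor of $mn$ exceeds $x^{1/31}$, so that both $m$ and $n$ are odd. Odd-ness of $m=N(\alpha)$ forces $(1+i)\nmid\alpha$, and then $\beta=1+(1+i)\alpha\equiv 1\pmod{1+i}$ automatically gives $(1+i)\nmid\beta$. Square-freeness of $m$ and $n$ rules out any inert rational prime dividing $\alpha$ or $\beta$, since such a prime's norm $q^2$ would contribute a square factor; hence $\alpha$ and $\beta$ factor in $\mathbb{Z}[i]$ as products of distinct prime ideals lying above distinct split rational primes, producing $\mathbb{Z}[i]/(\alpha)\cong\mathbb{Z}/m$ and $\mathbb{Z}[i]/(\beta)\cong\mathbb{Z}/n$ as abelian groups. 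Moreover $\gcd(\alpha,\beta)=1$ because $\beta\equiv 1\pmod\alpha$.

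With $(1+i)$, $\alpha$, $\beta$ pairwise coprime in $\mathbb{Z}[i]$, the Chinese Remainder Theorem yields
\[
E(\mathbb{F}_p)\cong \mathbb{Z}[i]/((1+i)^3)\oplus\mathbb{Z}[i]/(\alpha),\qquad E(\mathbb{F}_{p^2})\cong \mathbb{Z}[i]/((1+i)^5)\oplus\mathbb{Z}[i]/(\alpha)\oplus\mathbb{Z}[i]/(\beta).
\]
A short Smith normal form computation with the generators $(1+i)^3=-2+2i$ and $(1+i)^5=-4-4i$ gives $\mathbb{Z}[i]/((1+i)^3)\cong\mathbb{Z}/2\oplus\mathbb{Z}/4$ and $\mathbb{Z}[i]/((1+i)^5)\cong\mathbb{Z}/4\oplus\mathbb{Z}/8$. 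Combining with $\mathbb{Z}/m$ and $\mathbb{Z}/n$, and using $\gcd(4,m)=\gcd(8,mn)=\gcd(m,n)=1$ to collapse the direct sums, produces precisely $\mathbb{Z}/2\times\mathbb{Z}/(4m)$ and $\mathbb{Z}/4\times\mathbb{Z}/(8mn)$; the constraints $\mu(mn)\neq 0$ and $\omega(mn)\leq 10$ are inherited verbatim from Theorem~\ref{almost_triple}, yielding the required lower bound of $\gg x/(\log x)^3$. The only subtle step is the initial invocation of the CM-module isomorphism with the correct primary normalization of $\pi$; everything afterwards is a routine arithmetic verification inside $\mathbb{Z}[i]$.
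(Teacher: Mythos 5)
Your argument is correct but follows a genuinely different route from the paper's. You invoke the Deuring--Lenstra CM-module theorem $E(\mathbb{F}_{p^r})\cong\mathbb{Z}[i]/(\pi^r-1)$, factor $\pi^2-1$ as $(1+i)^5\alpha\beta$ up to a unit with $\alpha,\beta$ coprime to each other and to $1+i$, and read off the abelian group structure via the Chinese Remainder Theorem plus Smith normal form for the $2$-parts $\mathbb{Z}[i]/((1+i)^3)$ and $\mathbb{Z}[i]/((1+i)^5)$. The paper instead determines the $2$-primary subgroups by injecting torsion from abelian extensions — $E(\mathbb{Q}(i))_{\mathrm{tors}}\cong\mathbb{Z}/2\times\mathbb{Z}/4$ into $E(\mathbb{F}_p)$ and $E(\mathbb{Q}(\zeta_8))_{\mathrm{tors}}\cong\mathbb{Z}/4\times\mathbb{Z}/4$ into $E(\mathbb{F}_{p^2})$ (with the classification of torsion over $\mathbb{Q}^{\mathrm{ab}}$ due to Chou as supporting evidence), and then pins down the structure using the constraint $d\mid e$ for elliptic curve groups together with the orders $8m$ and $32mn$. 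Your approach is self-contained and pins down the $2$-part exactly in one computation, but it relies on the CM-module theorem, which is a deeper structural input than the elementary good-reduction injection the paper uses; the paper's route uses only the torsion injection, the $d\mid e$ constraint, and a torsion-table lookup. Both are valid, and both reduce the analytic content entirely to Theorem~\ref{almost_triple}. Two small points worth making explicit if you expand this: (i) the CM-module isomorphism requires $\operatorname{End}_{\mathbb{F}_p}(E)=\mathbb{Z}[i]$, which holds at split primes because $\mathbb{Z}[i]$ is the maximal order and ordinary reductions have the full CM ring defined over the base field; and (ii) the square-freeness of $m$ (resp.\ $n$) rules out a split prime $q=\pi_q\overline{\pi_q}$ dividing $\alpha$ to order $\geq 2$ \emph{or} both $\pi_q,\overline{\pi_q}$ dividing $\alpha$ — your phrase ``distinct prime ideals lying above distinct split rational primes'' covers this, but the second possibility is easy to overlook.
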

\begin{proof}
Suppose \( |E(\mathbb{F}_p)| = 8 P_k \) and \( \frac{|E(\mathbb{F}_{p^2})|}{|E(\mathbb{F}_p)|} = 4 P_{10-k} \), with \( (P_k, P_{10-k}) = 1 \), as in the second part of Theorem~\ref{almost_triple}. 

We determine the group structures of \( E(\mathbb{F}_p) \) and \( E(\mathbb{F}_{p^2}) \) by analyzing their \( 2 \)-primary components.

For any prime \( p \neq 2 \), it follows from \cite[Proposition~VII.3.1]{silverman} that there is an injection
\[
E(\mathbb{Q})_{\mathrm{tors}} \cong \mathbb{Z}/2\mathbb{Z} \times \mathbb{Z}/2\mathbb{Z} \hookrightarrow E(\mathbb{F}_p),
\]
where \( E(\mathbb{Q})_{\mathrm{tors}} \) consists of the points \( \infty \), \( (0, 0) \), and \( (\pm 1, 0) \).
Moreover, we have
\[
E(\mathbb{Q}(i))_{\mathrm{tors}} \cong \mathbb{Z}/2\mathbb{Z} \times \mathbb{Z}/4\mathbb{Z},
\]
consisting of the points \( (i, 1-i) \), \( (i, -1+i) \), \( (-i, 1+i) \), \( (-i, -1-i) \), together with the four   points in \( E(\mathbb{Q})_{\mathrm{tors}} \).
For any prime \( p \equiv 1 \pmod{4} \), again by \cite[Proposition~VII.3.1]{silverman}, we have for \( p \neq 2 \) and \( (n, p) = 1 \) an injection
\[
E(\mathbb{Q}(i))_{\mathrm{tors}}[n] \hookrightarrow E(\mathbb{Q}_p)[n] \hookrightarrow E(\mathbb{F}_p),
\]
thus,
\[
E(\mathbb{Q}(i))_{\mathrm{tors}} \cong \mathbb{Z}/2\mathbb{Z} \times \mathbb{Z}/4\mathbb{Z} \hookrightarrow E(\mathbb{F}_p).
\]

Since \( |E(\mathbb{F}_p)| = 8m \), where \( m \) is an odd square-free integer, the full group structure must be
\[
E(\mathbb{F}_p) \cong \mathbb{Z}/2\mathbb{Z} \times \mathbb{Z}/4m\mathbb{Z}.
\]

We now consider the structure of the subgroup of order \( 32 \) in \( E(\mathbb{F}_{p^2}) \).

Consider the unramified extension \( \mathbb{Q}_p(\zeta_{p^2 - 1}) / \mathbb{Q}_p \), whose residue field is \( \mathbb{F}_{p^2}/\mathbb{F}_p \). By \cite[Prop.~VII.3.1]{silverman}, for \( p \neq 2 \) and \( (n, p) = 1 \), we have
\[
E(\mathbb{Q}(\zeta_{p^2 - 1}))_{\mathrm{tors}}[n] \hookrightarrow E(\mathbb{Q}_p(\zeta_{p^2 - 1}))_{\mathrm{tors}}[n] \hookrightarrow E(\mathbb{F}_{p^2}).
\]
For \( p \equiv 1 \pmod{4} \), we have \( 8 \mid (p^2 - 1) \), so
\[
E(\mathbb{Q}(\zeta_8))_{\mathrm{tors}} \cong \mathbb{Z}/4\mathbb{Z} \times \mathbb{Z}/4\mathbb{Z} \hookrightarrow E(\mathbb{F}_{p^2}),
\]
where the structure of \( E(\mathbb{Q}(\zeta_8))_{\mathrm{tors}} \) can be verified using Magma.
Since the group of points on a reduced elliptic curve over a finite field is always of the form \( \mathbb{Z}/d \mathbb{Z}\times \mathbb{Z}/e\mathbb{Z} \) with \( d \mid e \), the \( 2 \)-primary subgroup of \( E(\mathbb{F}_{p^2}) \) must be
\[
\mathbb{Z}/4 \mathbb{Z}\times \mathbb{Z}/8\mathbb{Z}.
\]
Thus, if \( |E(\mathbb{F}_{p^2})| = 32mn \) with \( mn \) an odd square-free integer, we conclude
\[
E(\mathbb{F}_{p^2}) \cong \mathbb{Z}/4 \mathbb{Z}\times \mathbb{Z}/8mn\mathbb{Z}.
\]
		\end{proof}

In the proof of Theorem~\ref{theorem_group_structure}, we used the injection  
\[
E(\mathbb{Q}(\zeta_{8}))_{\mathrm{tors}} \cong \mathbb{Z}/4\mathbb{Z} \times \mathbb{Z}/4\mathbb{Z} \hookrightarrow E(\mathbb{Q}(\zeta_{p^2 - 1}))_{\mathrm{tors}}.
\]
In fact, this map is an isomorphism.

\begin{prop}\label{prop_abelian_extension}
	Let \( E : y^2 = x^3 - x \) be the elliptic curve over \( \mathbb{Q} \), and let \( \mathbb{Q}^{\mathrm{ab}} \) denote the maximal abelian extension of \( \mathbb{Q} \). Then
	\[
	E(\mathbb{Q}^{\mathrm{ab}})_{\mathrm{tors}} \cong \mathbb{Z}/4\mathbb{Z} \times \mathbb{Z}/4\mathbb{Z}.
	\]
	In particular, for any prime \( p \equiv 1 \pmod{4} \) and any even integer \( r \geq 2 \), we have
	\[
	E(\mathbb{Q}(\zeta_{p^r - 1}))_{\mathrm{tors}} \cong \mathbb{Z}/4\mathbb{Z} \times \mathbb{Z}/4\mathbb{Z}.
	\]
\end{prop}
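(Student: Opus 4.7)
The plan is to prove the two claimed inclusions $E[4] \subseteq E(\mathbb{Q}^{\mathrm{ab}})_{\mathrm{tors}}$ and $E(\mathbb{Q}^{\mathrm{ab}})_{\mathrm{tors}} \subseteq E[4]$ separately, and then deduce the cyclotomic statement as an immediate corollary. For the lower bound, the idea is direct computation of the $4$-division points. Theorem~\ref{theorem_group_structure} already accounts for $E(\mathbb{Q}(i))_{\mathrm{tors}} \cong \mathbb{Z}/2\mathbb{Z} \times \mathbb{Z}/4\mathbb{Z}$, which exhausts 8 of the 16 points of $E[4]$. To find the remaining 8 points, I would invert the duplication map against the three non-trivial $2$-torsion points: using $x([2]Q) = (x^2+1)^2/(4x(x^2-1))$, the condition $[2]Q = (\pm 1,0)$ reduces to the quartics $x^4 \mp 4x^3 + 2x^2 \pm 4x + 1 = (x^2 \mp 2x - 1)^2 = 0$, giving $x = \pm 1 \pm \sqrt{2}$, and substituting back into $y^2 = x^3 - x$ shows that the corresponding $y$-coordinates lie in $\mathbb{Q}(\sqrt{2})$. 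Hence $\mathbb{Q}(E[4]) \subseteq \mathbb{Q}(i,\sqrt{2}) = \mathbb{Q}(\zeta_8) \subset \mathbb{Q}^{\mathrm{ab}}$, yielding $E[4] \subseteq E(\mathbb{Q}^{\mathrm{ab}})$.

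For the upper bound, set $K = \mathbb{Q}(i)$, the CM field of $E$. By the main theorem of complex multiplication, $K(E_{\mathrm{tors}}) \subseteq K^{\mathrm{ab}}$, so every torsion point in $E(\mathbb{Q}^{\mathrm{ab}})$ has its field of definition inside $\mathbb{Q}^{\mathrm{ab}} \cap K^{\mathrm{ab}}$. It then suffices to show $E[8] \not\subseteq E(\mathbb{Q}^{\mathrm{ab}})$ and $E[\ell] \not\subseteq E(\mathbb{Q}^{\mathrm{ab}})$ for every odd prime $\ell$, and moreover that no proper Galois-stable line of order $\ell$ (possible only for $\ell$ split in $K$) is $\mathbb{Q}^{\mathrm{ab}}$-rational. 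I expect this to be the main obstacle. My approach is to analyze the mod-$n$ representation $\bar\rho_{E,n}\colon G_\mathbb{Q} \to \mathrm{GL}_2(\mathbb{Z}/n)$, whose image is contained in the normalizer $N$ of the Cartan $C = (\mathcal{O}_K/n)^\times$: complex conjugation provides a non-central element of the $\sigma C$-coset, and for any odd $\ell$ or for $n=8$ a Frobenius element at a suitably chosen split prime $q \nmid 2N_E$ realizes an element of $C$ of the form $\pi_q\bar\pi_q^{-1}$ which fails to commute with $\sigma$; this rules out the image being abelian. An alternative, shorter route is to invoke the explicit classification of $E(\mathbb{Q}^{\mathrm{ab}})_{\mathrm{tors}}$ for CM elliptic curves over $\mathbb{Q}$ (due to Olson, and refined by Gonz\'alez-Jim\'enez and Lozano-Robledo), which in the $j(E) = 1728$ case gives precisely $\mathbb{Z}/4\mathbb{Z} \times \mathbb{Z}/4\mathbb{Z}$.

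The second assertion follows immediately. For any odd prime $p$ we have $p^2 \equiv 1 \pmod{8}$, so for even $r \geq 2$, $8 \mid p^r - 1$ and therefore $\mathbb{Q}(\zeta_8) \subseteq \mathbb{Q}(\zeta_{p^r - 1}) \subseteq \mathbb{Q}^{\mathrm{ab}}$. Combining with the first part of the proposition sandwiches
\[
\mathbb{Z}/4\mathbb{Z} \times \mathbb{Z}/4\mathbb{Z} \cong E[4] \subseteq E(\mathbb{Q}(\zeta_{p^r - 1}))_{\mathrm{tors}} \subseteq E(\mathbb{Q}^{\mathrm{ab}})_{\mathrm{tors}} \cong \mathbb{Z}/4\mathbb{Z} \times \mathbb{Z}/4\mathbb{Z},
\]
forcing equality throughout and completing the proof.
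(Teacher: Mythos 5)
Your lower bound is a nice explicit alternative to the paper's Magma verification: the duplication-formula computation correctly produces $x$-coordinates $\pm 1\pm\sqrt{2}$ and $\pm i$, and (with a small correction — not all of the corresponding $y$-coordinates land in $\mathbb{Q}(\sqrt{2})$; for instance $x=-1+\sqrt{2}$ gives $y^2 = -(2-\sqrt{2})^2$, so $y=i(2-\sqrt{2})$ lies only in $\mathbb{Q}(i,\sqrt{2})$) the conclusion $\mathbb{Q}(E[4])\subseteq\mathbb{Q}(\zeta_8)\subset\mathbb{Q}^{\mathrm{ab}}$ is right. The cyclotomic deduction at the end is identical to the paper's.

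The difference, and the weak spot, is the upper bound. Your direct argument — using the main theorem of CM, the normalizer-of-Cartan structure of the mod-$n$ image, complex conjugation, and a split Frobenius to exhibit noncommuting elements — only shows the image of $\bar\rho_{E,n}$ is nonabelian, i.e.\ $E[n]\not\subseteq E(\mathbb{Q}^{\mathrm{ab}})$. That does not yet exclude a single $\mathbb{Q}^{\mathrm{ab}}$-rational point of order $\ell$ (or $8$): you would further need that no Galois-stable line is fixed by the commutator subgroup of the image, i.e.\ that the antidiagonal element $\mathrm{diag}(\pi_q\bar\pi_q^{-1},\,\bar\pi_q\pi_q^{-1})$ acts nontrivially on each eigenline for some admissible $q$, and you would need to carry this out for every odd $\ell$ and for $n=8$ simultaneously. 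You flag this yourself as ``the main obstacle,'' and the flag is warranted — as written the step is incomplete. Your alternative, citing a classification of $E(\mathbb{Q}^{\mathrm{ab}})_{\mathrm{tors}}$, is sound and is essentially what the paper does: it invokes Chou's classification \cite{chou} of torsion over $\mathbb{Q}^{\mathrm{ab}}$, then uses the isogeny data for this curve ($C(E)=C_2(E)=4$ with degrees $1,2,2,2$, and no $3$-isogeny) together with \cite[Table~1]{chou} and \cite[Lemma~2.7]{chou} to pin down $E(\mathbb{Q}^{\mathrm{ab}})[2^\infty]\cong\mathbb{Z}/4\times\mathbb{Z}/4$ and exclude $\mathbb{Z}/4\times\mathbb{Z}/12$. (Note your reference to Olson is to the classification of CM torsion over $\mathbb{Q}$, not over $\mathbb{Q}^{\mathrm{ab}}$; Chou or the González-Jiménez–Lozano-Robledo work is the right place to look.) If you want a self-contained proof rather than a citation, you would need to finish the Cartan-normalizer argument by controlling the Galois-stable lines.
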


	\begin{proof}
	If \( r \) is even and \( p \equiv 1 \pmod{4} \), then \( 8 \mid p^r - 1 \). Thus,
	\[
	E(\mathbb{Q}(\zeta_8))_{\mathrm{tors}} \cong \mathbb{Z}/4 \mathbb{Z}\times \mathbb{Z}/4\mathbb{Z}
	\hookrightarrow E(\mathbb{Q}(\zeta_{p^r - 1}))_{\mathrm{tors}}.
	\]
	Since \( \mathbb{Q}(\zeta_{p^r - 1}) \subset \mathbb{Q}^{\mathrm{ab}} \), it suffices to show that
	\[
	E(\mathbb{Q}^{\mathrm{ab}})_{\mathrm{tors}} \cong \mathbb{Z}/4 \times \mathbb{Z}/4.
	\]
	
	By \cite[Theorem~1.2]{chou}, the possible torsion structures over \( \mathbb{Q}^{\mathrm{ab}} \) are:
	\[
	\mathbb{Z}/4\mathbb{Z} \times \mathbb{Z}/4n\mathbb{Z}, \quad n = 1, 2, 3, 4, \quad \text{and} \quad \mathbb{Z}/8\mathbb{Z} \times \mathbb{Z}/8\mathbb{Z}.
	\]
Let \( C(E) \) denote the number of \( \mathbb{Q} \)-isogeny classes of \( E \), and let \( C_p(E) \) denote its \( p \)-primary component, as defined in \cite{kenku}. For \( E : y^2 = x^3 - x \), we have \( C(E) = C_2(E) = 4 \), with isogeny degrees \( 1, 2, 2, 2 \), as verified, for example, in \cite[\href{https://www.lmfdb.org/EllipticCurve/Q/32/a/3}{Elliptic Curve 32.a3}]{lmfdb}.
	It then follows from \cite[Table~1]{chou} that
	\[
	E(\mathbb{Q}^{\mathrm{ab}})[2^\infty] \cong \mathbb{Z}/4 \mathbb{Z}\times \mathbb{Z}/4\mathbb{Z}.
	\]
	Moreover, since \( E \) admits no 3-isogeny, we can exclude the structure \( \mathbb{Z}/4\mathbb{Z} \times \mathbb{Z}/12\mathbb{Z} \) by \cite[Lemma~2.7]{chou}. Hence,
	\[
	E(\mathbb{Q}^{\mathrm{ab}})_{\mathrm{tors}} \cong \mathbb{Z}/4 \mathbb{Z}\times \mathbb{Z}/4\mathbb{Z}.
	\]

	\end{proof}

The proof of Theorem \ref{almost_triple} can be adapted to any elliptic curve with complex multiplication   to establish a lower bound of order \( \frac{x}{(\log x)^3} \) for the number of primes \( p \leq x \) such that both \( E(\mathbb{F}_p) \) and \( E(\mathbb{F}_{p^2}) \) contain a large cyclic subgroup of finite index. This index is related to the injection of torsion points and is of the form \( 2^a \cdot 3^b \) for some non-negative integers \( a \) and \( b \).

Part of the technical complexity in Theorem \ref{almost_triple} arises from the sieve method used to control the number of prime factors. However, when focusing only the existence of large cyclic subgroups, the sieve can be considerably simplified.  We now present a streamlined proof of a lower bound for the number of primes \( p \leq x \) such that both \( E(\mathbb{F}_p) \) and \( E(\mathbb{F}_{p^2}) \) contain a cyclic subgroup of  finite index. This approach is inspired by the work of Gupta and Murty, who proved an unconditional lower bound for the number of primes \( p \leq x \) such that \( E(\mathbb{F}_p) \) is cyclic \cite[Theorem 1]{gupta}.

\begin{theorem}[Theorem \ref{intro_cyclicity_lower_bound}]\label{cyclicity_lower_bound}
	Let \( E/\mathbb{Q} \) be an elliptic curve with complex multiplication and conductor \( N_E \). Then, for sufficiently large \( x \),
\[
\#\left\{ p \leq x : p \nmid N_E,\ E(\mathbb{F}_p) \text{ and } E(\mathbb{F}_{p^2}) \text{ contain a cyclic subgroup of index } \leq 12 \right\} \gg \frac{x}{(\log x)^3}.
\]
\end{theorem}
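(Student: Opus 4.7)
The plan is to apply Proposition~\ref{prop_vector sieve} to the pair-sequence
\[
\mathcal{V} := \bigl\{ (p-1,\, p+1) \,:\, p \le x,\ p \nmid N_E,\ p \text{ splits in } \mathcal{O}_K \bigr\},
\]
in the spirit of Theorem~\ref{almost_triple} but in considerably simplified form, since the underlying level-of-distribution input is now just the classical Bombieri--Vinogradov theorem for primes in arithmetic progressions. The key translation is that the first invariant factor $d$ of $E(\mathbb{F}_{p^r})$ (i.e.\ the index of a maximal cyclic subgroup) is the largest integer with $E[d] \subseteq E(\mathbb{F}_{p^r})$, and by the Weil pairing every prime divisor $q$ of $d$ must satisfy $q \mid p^r - 1$. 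Restricting to split primes is essential: for inert $p$, $\pi_p^2 = -p$ gives $E(\mathbb{F}_{p^2}) \cong \mathbb{Z}/(p+1) \times \mathbb{Z}/(p+1)$, a cyclic index of $p+1 \gg 12$.

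Taking $X = \pi_K(x) \sim \operatorname{li}(x)/2$ as the reference cardinality (where $\pi_K(x)$ counts split primes up to $x$), one has, for square-free odd $d_1, d_2$ pairwise coprime and coprime to $N_E \cdot \mathrm{disc}(K)$,
\[
|\mathcal{V}_{d_1, d_2}| = \frac{1}{\varphi(d_1)\varphi(d_2)}\, \pi_K(x) + r(d_1, d_2),
\]
with the error summed against $\tau(d_1 d_2)^4$ controlled up to level $D = x^{1/2-\epsilon}$ by Bombieri--Vinogradov (combined with the fixed-modulus congruence defining splitting in $\mathcal{O}_K$). Hence $h_1(d) = h_2(d) = 1/\varphi(d)$ each satisfy the linear sieve hypothesis~\eqref{linear_sieve_condition}. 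Choosing $z_1 = z_2 = z = x^\theta$ with $\theta$ small enough that $\sigma = (1/2 - \epsilon)/\theta > 2$ and $f(\sigma, \sigma) > 0$, Proposition~\ref{prop_vector sieve} yields
\[
|S(\mathcal{V}; z, z)| \;\gg\; \pi_K(x)\, V(z)^2\, f(\sigma,\sigma) \;\gg\; \frac{x}{(\log x)^3},
\]
producing a lower bound of the claimed order for split primes $p \le x$ such that neither $p - 1$ nor $p + 1$ has a prime factor in $(C_E, z]$, where $C_E$ is a fixed constant absorbing the small divisors of $p \pm 1$ that cannot be sieved away (most notably $2$).

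For each such $p$, I would deduce the cyclic-index bound as follows. Primes $q \in (12, z]$ do not divide $p^2 - 1$ by construction, so $E[q] \not\subseteq E(\mathbb{F}_{p^2})$, and a fortiori $E[q] \not\subseteq E(\mathbb{F}_p)$. Primes $q \le 12$ contribute at most a bounded $q$-primary factor to the first invariant factor; a finite case analysis over the $\mathbb{Q}$-isomorphism classes of CM elliptic curves, paralleling Proposition~\ref{prop_abelian_extension} and bounding $|E(F)_{\mathrm{tors}}|$ for $F$ a small abelian extension of $K$, certifies uniformly that the product of these $q$-parts is at most $12$. Finally, the tail over primes $q > z$ is excluded by a counting argument: the condition $E[q] \subseteq E(\mathbb{F}_{p^2})$ forces $\pi_p^2 \equiv 1 \pmod{q\mathcal{O}_K}$, a Chebotarev-type condition of density $\asymp 1/q^2$, so Proposition~\ref{Bombierri-number field_3} gives
\[
\#\bigl\{ p \le x : \exists\, q > z,\ E[q] \subseteq E(\mathbb{F}_{p^2}) \bigr\}
\;\ll\; \sum_{q > z} \frac{x}{q^2 \log x}
\;\ll\; \frac{x}{z \log z \log x} \;=\; o\!\Bigl(\frac{x}{(\log x)^3}\Bigr)
\]
for any $z$ a positive power of $x$.

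The main obstacle, in my view, is the small-prime bookkeeping: verifying uniformly across all CM curves $E/\mathbb{Q}$ that the $q$-primary parts, for $q \in \{2,3,5,7,11\}$, of the first invariant factors of $E(\mathbb{F}_p)$ and $E(\mathbb{F}_{p^2})$ combine to an integer at most $12$. This is a finite but curve-specific torsion computation, very much in the spirit of the 2-part analysis in Section~\ref{group_structure} for $y^2 = x^3 - x$; once it is in place, the vector-sieve lower bound together with the tail estimate immediately gives the claimed $\gg x/(\log x)^3$.
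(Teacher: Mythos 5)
Your overall strategy—applying the vector sieve to the pair $(p-1,p+1)$ over split primes and then ruling out large and small prime divisors of the invariant factors separately—matches the paper's proof in spirit, and your observation that split primes must be isolated is correct. However, there are two concrete gaps.

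\textbf{Small-prime bookkeeping.} You flag this as the main obstacle but the proposed tool does not give the right kind of bound. Bounding $|E(F)_{\mathrm{tors}}|$ for $F$ a small abelian extension of $K$ controls what \emph{injects into} $E(\mathbb{F}_{p^r})$ via reduction, i.e.\ gives a \emph{lower} bound on torsion; it does not cap the $q$-primary part of the first invariant factor. That invariant factor is the largest $m$ with $m \mid \pi_p - 1$ in $\mathcal{O}_K$ (for split $p$), and for fixed small $q$ the $q$-adic valuation of $p-1$ and of $\pi_p-1$ is unbounded as $p$ ranges over primes satisfying $(p\pm 1, P(z)/C_E) = 1$: your sieve has absorbed $q \le C_E$ into the constant and therefore says nothing about these valuations. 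Some additional input is required, and no "finite case analysis over CM curves" alone can supply it—the issue is the prime $p$, not the curve. The paper handles this by imposing the congruence conditions $p \equiv 5 \pmod 8$ and $p \equiv 5 \pmod 9$ inside the sieved set, which pin down $v_2(p^2-1)$ and $v_3(p^2-1)$ at the outset; these extra fixed congruences cost nothing in the vector sieve.

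\textbf{Tail estimate.} You invoke Proposition~\ref{Bombierri-number field_3} to bound, for each individual $q > z$, the number of $p \le x$ with $E[q] \subseteq E(\mathbb{F}_{p^2})$. But that proposition is a Bombieri--Vinogradov \emph{average} over moduli; it does not furnish a pointwise bound for a single modulus. Worse, the modulus $q\mathcal{O}_K$ has norm $q^2$, which leaves the admissible range $N(I) \le x^{1/2}/(\log x)^B$ as soon as $q > x^{1/4}$, while Hasse's bound allows $q$ up to $\asymp 2\sqrt{x}$. What actually works—and what the paper uses—is an elementary lattice-point count: the divisibility $q\mathcal{O}_K \mid \pi_p^2 - 1$ confines $\pi_p$ to $O(1)$ residue classes modulo $q\mathcal{O}_K$, so the number of lattice points $\pi$ with $N(\pi) \le x$ in any single class is $\ll x/q^2 + \sqrt{x}/q$, and summing $\ll x/q^2$ over $z < q \le C\sqrt{x}$ gives $\ll x/(z\log z)$, unconditionally. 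This is $o(x/(\log x)^3)$ for any $z = x^\theta$ with $\theta > 0$. (The paper derives this by bounding the relevant count by $\sum_{n\le cx,\ \ell^2 \mid n}\tau(n)$ and separating the Hasse bound argument into cases depending on whether $\ell^2$ divides one norm factor or $\ell$ divides both.)

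With these two repairs—congruence conditions in place of the small-prime heuristic, and a direct lattice count in place of Bombieri--Vinogradov for the tail—your argument would match the paper's.
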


\begin{proof}
Let \( \pi_p \) denote a root of the characteristic polynomial \( x^2 + a_p x + p = 0 \), where \( a_p \) is the Frobenius trace at \( p \).
Then we have
\[
\mathbb{Z}/m\mathbb{Z} \times \mathbb{Z}/m\mathbb{Z} \subset E(\mathbb{F}_{p^2})
\quad \text{if and only if} \quad
\pi_p^2 - 1 \text{ is divisible by } m \text{ in } \operatorname{End}(E(\mathbb{F}_{p^2})).
\]
This condition in particular implies that $m\mid p^2-1 $ and $m^2\mid |E(\mathbb{F}_{p^2})|$, see \cite[Lemma 1 and 2]{elliptic_notes}.

Consider the set
\[
S_\epsilon(x) := \left\{
\begin{aligned}
	p \leq x :\ & p \equiv 5 \pmod{8},\ p \equiv 5 \pmod{9}, \\
	& \text{all odd prime factors } \ell > 3 \text{ of } p - 1 \text{ are distinct and } \ell > z_1, \\
	& \text{all odd prime factors } \ell > 3 \text{ of } p + 1 \text{ are distinct and } \ell > z_2
\end{aligned}
\right\},
\]
where \( z_1 = x^{\theta_1} \) and \( z_2 = x^{\theta_2} \) for some \( 0 < \theta_1, \theta_2 < 1 \). We take \( \theta_1 = \theta_2 = \frac{1}{8} - \epsilon \) for some \( \epsilon > 0 \), and an application of the vector sieve (Proposition~\ref{prop_vector sieve}) yields the lower bound
\[
\#S_\epsilon(x) \gg \frac{x}{(\log x)^3}.
\]
Note that the congruence conditions on \( p \) ensure that \( 8 \nmid p^2 - 1 \) and \( 9 \nmid p^2 - 1 \). Consequently, 
\[
\mathbb{Z}/2^k3^l\mathbb{Z} \times \mathbb{Z}/2^k3^l\mathbb{Z} \not\subset E(\mathbb{F}_p) \quad \text{and } \quad \mathbb{Z}/2^k3^l\mathbb{Z} \times \mathbb{Z}/2^k3^l\mathbb{Z} \not\subset E(\mathbb{F}_{p^2})
\]
for any \( k \geq 3 \) or \( l\geq 2 \).

Then we have
\begin{align*}
	&\#\left\{ p \leq x : E(\mathbb{F}_p) \text{ and } E(\mathbb{F}_{p^2}) \text{ contain a cyclic subgroup of index } \leq  12 \right\} \\
	\geq\ &\#\left\{ p \in S_\epsilon(x) : \mathbb{Z}/\ell\mathbb{Z} \times \mathbb{Z}/\ell\mathbb{Z} \not\subset E(\mathbb{F}_{p^2}) \text{ for any prime } \ell > 3 \right\} \\
	=\ &|S_\epsilon(x)| - \#\left\{ p \in S_\epsilon(x) : \mathbb{Z}/\ell\mathbb{Z} \times \mathbb{Z}/\ell\mathbb{Z} \subset  E(\mathbb{F}_{p^2}) \text{ for some prime } \ell > 3 \right\}.
\end{align*}
Any such prime $\ell$ in 
\[
\#\left\{ p \in S_\epsilon(x) : \mathbb{Z}/\ell\mathbb{Z} \times \mathbb{Z}/\ell\mathbb{Z} \subset E(\mathbb{F}_{p^2}) \text{ for some prime } \ell > 3 \right\}
\]
must satisfy 
\begin{equation}\label{divisibility_condtition_1}
\ell^2 \mid N(\pi_p^2 - 1) = (p + 1 - a_p)(p + 1 + a_p),
\end{equation}
and
 \begin{equation}\label{divisibility_condition_2}
 	\ell \mid p^2 - 1.
 \end{equation}
Since $p\in S_\epsilon(x)$, from \eqref{divisibility_condition_2}, we have 
\[
\ell > x^{1/8 - \epsilon}.
\]
On the other hand, from both \eqref{divisibility_condtition_1} and \eqref{divisibility_condition_2}, we have  that $\ell$ divides at least one of $a_p$, $a_p + 2$, or $a_p - 2$. By Hasse's bound, $|a_p| \leq 2\sqrt{p} \leq 2x^{1/2}$, so
\begin{equation}\label{bound_l}
	\ell \leq |a_p| + 2 \leq 2x^{1/2} + 2 .
\end{equation}

In the CM case, $\pi_p$ lies in the ring of integers $\mathcal{O}_K$ of the CM field $K$. Therefore, the number of elements $\alpha \in \mathcal{O}_K$ with norm $N(\alpha) = t$ is bounded by $\tau(t)$. This follows, for example, from the unique factorization of elements in $\mathcal{O}_K$.

Suppose $\ell^2 \mid N(\pi_p - 1)$ or $\ell^2 \mid N(\pi_p + 1)$. Then the number of such primes $p$ is  
\[
\ll \sum_{x^{1/8 - \epsilon} < \ell \leq 4x^{1/2}} \sum_{\substack{n \leq 4x \\ \ell^2 \mid n}} \tau(n) \ll x^{7/8 + \epsilon}\log x .
\]

Now suppose both $\ell \mid N(\pi_p - 1)$ and $\ell \mid N(\pi_p + 1)$. Since \( \ell > 3 \), the prime \( \ell \) must split in \( \mathcal{O}_K \), say \( \ell = \pi_\ell \overline{\pi_\ell} \), and we may assume
\[
\pi_\ell \mid \pi_p - 1 \quad \text{and} \quad \overline{\pi_\ell} \mid \pi_p + 1.
\]
By the Chinese Remainder Theorem, there exists a unique $\alpha \in (\mathcal{O}_K / \ell)^\times$ such that
\[
\pi_p \equiv \alpha \pmod{\ell}
\]
is equivalent to the conditions $\pi_\ell \mid \pi_p - 1$ and $\overline{\pi_\ell} \mid \pi_p + 1$.

Assume \( K = \mathbb{Q}(\sqrt{D}) \) and \( \mathcal{O}_K = \mathbb{Z}[\omega_K] \), where \( \omega_K = \sqrt{D} \) if \( D \equiv 2,3 \pmod{4} \), and \( \omega_K = \frac{1 + \sqrt{D}}{2} \) if \( D \equiv 1 \pmod{4} \).
Then we may choose representatives of $\mathcal{O}_K / \ell$ as
\[
\left\{ m + n \omega_K : -\frac{\ell + 1}{2} \leq m, n \leq \frac{\ell - 1}{2} \right\}.
\]
Using the bound \( \ell \leq 2x^{1/2} + 2 \) from~\eqref{bound_l}, it follows that for sufficiently large \( x \),
\[
N(\alpha) \leq \frac{9(\ell + 1)^2}{16} + \frac{|D|(\ell + 1)^2}{4} \leq \left(\frac{|D|}{4} + \frac{9}{16}\right)(\ell + 1)^2 \leq ( |D|+4) x .
\]
Therefore, the number of such primes $p \in S_\epsilon(x)$ is bounded by
\[
\sum_{x^{1/8 - \epsilon} < \ell \leq 4x^{1/2}} \sum_{\substack{n \leq ( |D|+4) x \\ \ell^2 \mid n}} \tau(n) \ll x^{7/8 + \epsilon} \log x.
\]
Since $\epsilon > 0$ can be taken arbitrarily small, we conclude that
\begin{align*}
&\#\left\{ p \leq x : E(\mathbb{F}_p) \text{ and } E(\mathbb{F}_{p^2}) \text{ contain a cyclic subgroup of index } \leq 12 \right\}\\  \gg& \frac{x}{(\log x)^3} + O(x^{7/8 + \epsilon} \log x)\\ \gg& \frac{x}{(\log x)^3}.
\end{align*}
\end{proof}
\begin{comment}
\begin{rem}
We note that the method used here does not directly extend to the non-CM case to yield the same unconditional lower bound.
\end{rem}
\end{comment}

	\section{Almost prime order of $\frac{|E(\mathbb{F}_{p^\ell})|}{ |E(\mathbb{F}_p)|}$ for CM elliptic curves }\label{discussion}
In this section, we consider the almost prime property of \(\frac{|E(\mathbb{F}_{p^\ell})|}{|E(\mathbb{F}_p)|},\) for a fixed prime \( \ell \geq 2 \), as motivated by Koblitz’s question (see p.~2). By combining this with an almost prime condition on \( |E(\mathbb{F}_p)| \), one can analyze the almost primality of \( |E(\mathbb{F}_{p^\ell})| \) itself.

Note that
\begin{equation}\label{order}
	|E(\mathbb{F}_{p^n})| = N(\pi_p^n - 1) = \prod_{d \mid n} N(\Phi_d(\pi_p)),
\end{equation}
where \( \Phi_d \) is the \( d \)-th cyclotomic polynomial and \( N \) denotes the complex norm, given by \( N(z) = z\overline{z} \). In the case \( n = \ell \) prime, this simplifies to
\[
|E(\mathbb{F}_{p^\ell})| = N(\pi_p^\ell - 1).
\]

	\begin{theorem}[Theorem \ref{prop_intro}]\label{higher_order}
		Let \( E/\mathbb{Q} \) be the elliptic curve \( y^2 = x^3 - x \). For any odd prime \(\ell\geq 3 \), define  
	\[
	n_\ell = \lfloor 4.1(\ell - 1) + 1.2 \rfloor.
	\]
	Then, for sufficiently large \( x \),
	\[
	\# \left\{ p \leq x \,\middle|\, p \equiv 1 \pmod{4},\, \Omega\left( \frac{|E(\mathbb{F}_{p^\ell})|}{|E(\mathbb{F}_p)|} \right) \leq n_\ell \right\} \gg \frac{x}{(\log x)^2}.
	\]	
	When \(\ell= 2 \), for sufficiently large \( x \),
	\[
	\# \left\{ p \leq x \,\middle|\, p \equiv 1 \pmod{4},\, \frac{|E(\mathbb{F}_{p^2})|}{4|E(\mathbb{F}_p)|} = P_5 \right\} \gg \frac{x}{(\log x)^2},
	\]
	where \( P_5 \) denotes a product of  at most 5 distinct  primes.
	\end{theorem}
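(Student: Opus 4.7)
The plan is to apply a weighted linear sieve to the single-variable sequence
\[
\mathcal{A}_\ell := \bigl\{\, N(\Phi_\ell(\pi)) : \pi \in \mathbb{Z}[i]\text{ primary},\ N(\pi) = p \leq x \,\bigr\},
\]
whose elements have size $\ll_\ell x^{\ell-1}$ since $|\pi|=\sqrt{p}\leq\sqrt{x}$. Following Section~\ref{W(x)}, the rational divisibility $d\mid N(\Phi_\ell(\pi))$ is converted into ideal divisibility in $\mathbb{Z}[i]$ via M\"obius inversion on ideals (Lemma~\ref{lemma_Mobius} together with the change of summation leading to \eqref{order_summation}). For a prime ideal $\mathfrak{q}\subset\mathbb{Z}[i]$ satisfying $\ell\mid N(\mathfrak{q})-1$, the condition $\mathfrak{q}\mid\Phi_\ell(\pi)$ amounts to $\pi\bmod\mathfrak{q}$ being one of the $\ell-1$ primitive $\ell$-th roots of unity in $\mathbb{Z}[i]/\mathfrak{q}$; the count of primary primes in each such residue class is controlled by Proposition~\ref{Bombierri-number field_3}, and the assembled error estimate (the single-variable analogue of Lemma~\ref{lemma2}) gives a level of distribution $D = x^{1/4-\varepsilon}$.

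For $\ell=2$, $\Phi_2(\pi)=\pi+1$ and primarity forces $(1+i)^2\mid\pi+1$, so $\mathcal{A}_2/4$ coincides with the second coordinate of the multiset $\mathcal{W}$ introduced in Section~\ref{W(x)}. The associated density $h_2(d)=h(1,d)$ from \eqref{def:h_values} satisfies the linear sieve hypothesis \eqref{linear_sieve_condition} by Mertens' theorem in progressions. Running Richert's weighted linear sieve in the style of Cojocaru's proof of Theorem~\ref{corojacu}, but applied to $N(\pi+1)/4$ in place of $N(\pi-1)/8$, yields $\omega(n)\leq 5$ for a positive-density subset of primes $p\equiv 1\pmod 4$; the squarefree reduction of \eqref{square_free_3.1} then upgrades this $\omega$-bound to the claimed $P_5$ statement.

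For odd $\ell\geq 3$, the local density at a split rational prime $q\equiv 1\pmod{4\ell}$ is $h(q)\sim 2(\ell-1)/q$, obtained by summing contributions from the two prime ideals $\mathfrak{q}\mid q$, each producing $\ell-1$ admissible residue classes (the primitive $\ell$-th roots of unity in $\mathbb{F}_q = \mathbb{Z}[i]/\mathfrak{q}$); for $q$ inert or $q\not\equiv 1\pmod\ell$, $h(q)\ll 1/q^2$. By Dirichlet's theorem $\sum_{q<z} h(q)\log q\sim\log z$, so the problem is a dimension-$1$ linear sieve with $h$ satisfying \eqref{linear_sieve_condition}. With elements of size $N_\ell\asymp x^{\ell-1}$ and level of distribution $D=x^{1/4-\varepsilon}$, Richert's weighted linear sieve produces a lower bound $\gg x/(\log x)^2$ on
\[
\#\!\left\{\, p\leq x : p\equiv 1\pmod 4,\ \Omega\!\left(\tfrac{|E(\mathbb{F}_{p^\ell})|}{|E(\mathbb{F}_p)|}\right)\leq r_\ell\,\right\},
\]
where the permissible $r_\ell$ satisfies $r_\ell\sim \log N_\ell/\log D\sim 4(\ell-1)$; numerical optimization of the sieve parameters $(\lambda,\delta,\theta)$, analogous to the choices $\alpha = 1/4.01$ and $\lambda = 1/2.4$ made in Section~\ref{W(x)}, then produces the precise $n_\ell = \lfloor 4.1(\ell-1)+1.2\rfloor$.

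The main technical hurdle is the weighted-sieve optimization yielding the sharp constants $4.1$ and $1.2$ uniformly in $\ell$: this requires careful bookkeeping of the Jurkat--Richert functions $F$ and $f$ together with the factor $\ell-1$ governing $\log N_\ell$, while balancing the main-term positivity against the weighted subtraction term analogous to $W(x)$ in Section~\ref{section_setup}. A secondary subtlety, explaining why the $\ell\geq 3$ statement is phrased in terms of $\Omega$ rather than a $P_r$ statement with distinct primes, is that the squarefree-reduction argument \eqref{square_free_3.1} applied to $\mathcal{A}_\ell$ contributes $O(x^{\ell-1}/z)$, which exceeds the target $x/(\log x)^2$ for $\ell\geq 3$ and cannot be discarded.
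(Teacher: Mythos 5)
Your proposal matches the paper's approach closely: the same single-variable multiset $\mathcal{A}^\ell$, the same conversion of rational divisibility to ideal divisibility in $\mathbb{Z}[i]$ via the M\"obius-inversion computation of \eqref{order_summation}, the same appeal to Proposition~\ref{Bombierri-number field_3} giving level $D=x^{1/4-\epsilon}$, and the same heuristic $r_\ell \approx \log N_\ell / \log D \approx 4(\ell-1)$ behind the constant in $n_\ell$. The $\ell=2$ identification with the second coordinate of $\mathcal{W}$ and the resulting squarefree $P_5$ conclusion also follow the paper's argument.

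Your final paragraph, however, misdiagnoses why $\ell\geq 3$ is stated with $\Omega$. The squarefree-reduction bound for primes $z < q \leq x^{1/2}$ does not blow up to $O(x^{\ell-1}/z)$: the paper bounds the number of Gaussian primes $\alpha$ with $N(\alpha)\leq x$ and $q^2 \mid N(\Phi_\ell(\alpha))$ by passing to a translate $\alpha - \gamma$ of bounded norm and counting $\beta\in\mathbb{Z}[i]$ with $N(\beta) \leq 4x$ and $q^2\mid N(\beta)$, giving $O(x\log x / q^2)$ per $q$, hence $O(x\log x/z)$ in total — still negligible. What actually fails for $\ell\geq 3$ is the reduction for primes $q > x^{1/2}$: there the translate argument no longer forces $N(\alpha-\gamma)\ll x$, and since elements of $\mathcal{A}^\ell$ can be as large as $x^{\ell-1}$, such large $q$ genuinely can divide them to higher order. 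The paper therefore does not attempt to prove squarefreeness for $\ell\geq 3$; instead it obtains $\Omega$ directly by observing that the multiplicity-counted weight $\sum_p w_p v_p(a)$ differs from the distinct-prime weight $\sum_{p\mid a} w_p$ only by terms that are either negative (the tail $p\geq y$ with $w_p\leq 0$, and $y=x^\delta < x^{1/2}$) or controlled by the small-prime-square count just described. You should make this $\omega\to\Omega$ upgrade via the sign structure of $w_p$ explicit, since it is a genuine step; simply "running Richert's weighted sieve'' does not automatically produce a bound on $\Omega(a)$ when $|\mathcal{A}_p|$ is defined without multiplicity.
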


	\begin{proof}
Let \( \ell \geq 3 \) be an odd prime. We have 
\[
\frac{|E(\mathbb{F}_{p^\ell})|}{|E(\mathbb{F}_p)|} = N\left( \Phi_\ell(\pi) \right),
\]
where \( \pi \in \mathbb{Z}[i] \) with \( N(\pi) = p \), and \( \Phi_\ell \) denotes the \( \ell \)-th cyclotomic polynomial.

We will see that this problem reduces to a linear sieve problem. This parallels the classical setting, where the sifting set arises from the values of an irreducible polynomial evaluated at rational primes; see, for example, \cite[Example~1.3]{sieve}.

Consider the sets
\[
\mathcal{A}^\ell := \left\{ N\left( \Phi_\ell(\pi) \right) \;\middle|\; N(\pi) = p \leq x,\ 
\pi \equiv 1 \pmod{2(1+i)} \right\},
\]
\[
\widetilde{\mathcal{A}^\ell} := \left\{ \Phi_\ell(\pi) \;\middle|\; N(\pi) = p \leq x,\ 
\pi \equiv 1 \pmod{2(1+i)} \right\}.
\]
We first bound the elements of \( \mathcal{A}^\ell \). Observe that
\begin{equation}
	N\left( \Phi_\ell(\pi) \right) 
= \left| \Phi_\ell(\pi) \right|^2 
= \left| \prod_{k=1}^{\ell - 1} \left( \pi - e^{2\pi i k / \ell} \right) \right|^2  
\leq \left( \sqrt{p} + 1 \right)^{2(\ell - 1)} 
\leq c_\ell x^{\ell - 1}, \label{bound_elements}
\end{equation}
for some constant \( c_\ell > 0 \).

As in the proof of Theorem~\ref{almost_triple}, the key step is to estimate \( \left| \widetilde{\mathcal{A}^\ell}_{\mathfrak{a}} \right| \), where
\[
\widetilde{\mathcal{A}^\ell}_{\mathfrak{a}} := \left\{ \alpha \in \widetilde{\mathcal{A}^\ell} \;:\; \mathfrak{a} \mid \alpha \right\},
\]
and \( \mathfrak{a} \subset \mathbb{Z}[i] \) is a square-free ideal such that \( d(\mathfrak{a}) \mid P(x) \). Here, \( d(\mathfrak{a}) \) denotes the smallest positive integer divisible by \( \mathfrak{a} \), as defined on p.~\pageref{def_d_a}.

For a square-free ideal \( \mathfrak{a} \subset \mathbb{Z}[i] \), the following conditions are equivalent by the Chinese Remainder Theorem and the properties of cyclotomic polynomials:
\begin{enumerate}
	\item[(1)] \( \mathfrak{a} \mid \Phi_\ell(\pi) \) in \( \mathbb{Z}[i] \).
	\smallskip
	\item[(2)] For each prime ideal \( \mathfrak{p}_i \mid \mathfrak{a} \), let \( k_i = \mathbb{Z}[i]/\mathfrak{p}_i \) be the corresponding residue field:
	\begin{itemize}
		\item[\textup{(a)}] If \( \operatorname{char}(k_i) \neq \ell \), then \( \Phi_\ell(X) \) splits completely over \( k_i \); that is,
		\[
		\Phi_\ell(X) = \prod_{j=1}^{\ell - 1} (X - c_{i,j}),
		\]
		where \( \{c_{i,1}, \dots, c_{i,\ell - 1}\} \subset k_i \) are the primitive \( \ell \)th roots of unity, and
		\[
		\pi \equiv c_{i,j}' \pmod{\mathfrak{p}_i} \quad \text{for some } 1 \leq j \leq \ell - 1,
		\]
		with \( c_{i,j}' \in \mathbb{Z}[i] \) a lift of \( c_{i,j} \).
		
		\item[\textup{(b)}] If \( \operatorname{char}(k_i) = \ell \), then in \( k_i \),
		\[
		\Phi_\ell(X) = (X - 1)^{\ell - 1}, \quad \text{and} \quad \pi \equiv 1 \pmod{\mathfrak{p}_i}.
		\]
	\end{itemize}
\end{enumerate}
In case~\textup{(2)(a)}, we must have \( N(\mathfrak{p}_i) \equiv 1 \pmod{\ell} \), and in case~\textup{(2)(b)}, \( \ell \mid N(\mathfrak{p}_i) \). In particular, \( (1+i) \nmid \mathfrak{a} \), since this would imply \( 2 \equiv 1 \pmod{\ell} \) or \( \ell \mid 2 \), both excluded by the assumption \( \ell \geq 3 \). Therefore, \( \widetilde{\mathcal{A}^\ell}_{\mathfrak{a}} = \emptyset \) if \( (\mathfrak{a}, (1+i)) \neq (1) \).

Moreover, in case~\textup{(2)(a)}, \( \pi \) lies in one of \( \ell - 1 \) residue classes modulo \( \mathfrak{p}_i \), whereas in case~\textup{(2)(b)}, it lies in a single class modulo \( \mathfrak{p}_i \).

We write
\[
|\widetilde{\mathcal{A}^\ell}_{\mathfrak{a}}| = \widetilde{g}(\mathfrak{a})\, \operatorname{li}(x) + \widetilde{r}(\mathfrak{a}),
\]
where \( \widetilde{g}(\mathfrak{a}) \) is a multiplicative function supported on square-free ideals of \( \mathbb{Z}[i] \), with values on prime ideals given as follows:
\begin{align*}
	& \widetilde{g}(1 + i) = 0, \\
	& \widetilde{g}(\pi) = \frac{\ell - 1}{p - 1},    \quad\text{if } N(\pi) = p \text{ splits and } p \equiv 1 \pmod{\ell}, \\
	& \widetilde{g}(q) = \frac{\ell - 1}{q^2 - 1},   \quad\text{if } q \text{ is inert and } q^2 \equiv 1 \pmod{\ell}, \\
	& \widetilde{g}(\pi_0) = \frac{1}{\ell - 1},   \quad\text{if } \ell \equiv 1 \pmod{4} \text{ and } \pi_0 \mid \ell, \\
	& \widetilde{g}(\ell) = \frac{1}{\ell^2 - 1},    \quad\text{if } \ell \equiv 3 \pmod{4}.
\end{align*}
By Proposition~\ref{Bombierri-number field_3}, for any \( A > 0 \), there exists   \( B = B(A) > 0 \) such that  
\begin{equation}\label{error_tilde}
	\sum_{ N(\mathfrak{a}) \leq \frac{x^{1/2}}{(\log x)^B} } \widehat{\mu}^2(\mathfrak{a})|\widetilde{r}(\mathfrak{a})| \ll _\ell \frac{x}{(\log x)^A},
\end{equation}
analogous to~\eqref{error_term_original}.

	We now compute the density function and error term using the same approach as in \eqref{W_interchange} and \eqref{order_summation}. For \( d \mid P(x) \), we have  
	\begin{equation}\label{A^l_d}
		|\mathcal{A}^\ell_d| = \sum_{k \mid d} \mu(k) \sum_{\mathfrak{a} \mid k} \widehat{\mu}(\mathfrak{a}) |\widetilde{\mathcal{A}^\ell}_{\mathfrak{a}}| = \sum_{d(\mathfrak{a}) = d} \widehat{\mu}(\mathfrak{a}) \mu(d)\, |\widetilde{\mathcal{A}^\ell}_{\mathfrak{a}}| = g(d) \operatorname{li}(x) + r(d),
	\end{equation}
	where
	\[
	g(d) = \sum_{d(\mathfrak{a}) = d} \widehat{\mu}(\mathfrak{a}) \mu(d) \widetilde{g}(\mathfrak{a}), \qquad
	r(d) = \sum_{d(\mathfrak{a}) = d} \widehat{\mu}(\mathfrak{a}) \mu(d) \widetilde{r}(\mathfrak{a}).
	\]
The function \( g(d) \) is multiplicative and supported on square-free integers, with values at primes  given by 
	\begin{equation} \label{eq:g_function}
		g(p) =
		\begin{cases}
			\displaystyle \frac{2\varphi(\ell)}{p - 1} - \frac{\varphi(\ell)^2}{(p - 1)^2}, & \text{if } p \equiv 1 \pmod{4} \text{ and } p \equiv 1 \pmod{\ell}, \\[8pt]
			\displaystyle \frac{\varphi(\ell)}{p^2 - 1}, & \text{if } p \equiv 3 \pmod{4} \text{ and } p^2 \equiv 1 \pmod{\ell}, \\[8pt]
			\displaystyle \frac{2}{\ell - 1} - \frac{1}{(\ell - 1)^2}, & \text{if } p = \ell \text{ and } \ell \equiv 1 \pmod{4}, \\[8pt]
			\displaystyle \frac{1}{\ell^2 - 1}, & \text{if } p = \ell \text{ and } \ell \equiv 3 \pmod{4}, \\[8pt]
			0, & \text{if } p = 2 \text{ or otherwise}.
		\end{cases}
	\end{equation}
	
For square-free integers \( d \), the square-free ideals \( \mathfrak{a} \) with \( d(\mathfrak{a}) = d \) are distinct as \( d \) varies, and satisfy
\(d \leq N(\mathfrak{a}) \leq d^2,\)
with both bounds attained; see~\eqref{bound_N(a)}. It then follows from~\eqref{error_tilde} that
\begin{equation}\label{error_d}
	\sum_{\substack{d < \frac{x^{1/4}}{(\log x)^{B/2}}}} \mu^2(d)\, |r(d)| \ll \frac{x}{(\log x)^A}.
\end{equation}

By Mertens' estimate, we have
\[
\sum_{\substack{p \leq x \\ p \equiv 1 \pmod{4} \\ p \equiv 1 \pmod{\ell}}} \frac{1}{p-1}
= \frac{1}{2(\ell - 1)} \log \log x + M(\ell) + O\left( \frac{1}{\log x} \right),
\]
for some constant \( M(\ell) \). 
Then,
\[	V(x) := \prod_{p < x} \left(1 - g(p)\right)
= \left(1 - g(\ell)\right)
\prod_{\substack{p < x \\ p \equiv 1 \pmod{4} \\ p \equiv 1 \pmod{\ell}}} \left(1 - \frac{\varphi(\ell)}{p - 1} \right)^2
\prod_{\substack{q < x \\ q \equiv 3 \pmod{4} \\ q^2 \equiv 1 \pmod{\ell}}} \left(1 - \frac{\varphi(\ell)}{q^2 - 1} \right)
\sim \frac{C_\ell}{\log x},\]
where
\[
C_\ell = (1 - g(\ell)) \cdot e^{-2(\ell-1)M(\ell)}
\prod_{\substack{q \equiv 3 \pmod{4} \\ q^2 \equiv 1 \pmod{\ell}}} \left(1 - \frac{\varphi(\ell)}{q^2 - 1} \right),
\]
and
\[
1 - g(\ell) = \frac{1 + \chi_4(\ell)}{2} \left(1 - \frac{1}{\ell - 1} \right)^2
+ \frac{1 - \chi_4(\ell)}{2} \left(1 - \frac{1}{\ell^2 - 1} \right),
\]
with \( \chi_4 \) the nontrivial Dirichlet character modulo 4.
		
Thus, the density function \( g \) satisfies the assumptions of the linear sieve.  

Let \( 0 < \theta < \delta < \tfrac{1}{2} \), and let \( \lambda > 0 \) be a parameter to be determined later. Define \( z = x^\theta \) and \( y = x^\delta \). We consider the following weighted sieve \label{similar_S(x)} expression
\[
S(x) := \left| S(\mathcal{A}^\ell, z) \right| 
- \lambda \sum_{z \leq p < y} w_p \left| S\left((\mathcal{A}^\ell)_p, z\right) \right|,
\qquad w_p := 1 - \frac{\log p}{\log y}.
\]
For \( p \geq y \), we have \( w_p \leq 0 \), so
\[
\sum_{z \leq p < y} w_p \left| S\left((\mathcal{A}^\ell)_p, z\right) \right| 
\geq \sum_{p \geq z} w_p \left| S\left((\mathcal{A}^\ell)_p, z\right) \right|,
\]
and the tail sum \( \sum_{p \geq y} w_p \left| S\left((\mathcal{A}^\ell)_p, z\right) \right| \) is always negative, regardless of whether multiplicities are considered.
Therefore,
\begin{align*}
	S(x) 
	&\leq \left| S(\mathcal{A}^\ell, z) \right| 
	- \lambda \sum_{p \geq z} w_p \left| S\left((\mathcal{A}^\ell)_p, z\right) \right| \\
	&\leq \sum_{a \in S(\mathcal{A}^\ell, z)} 
	\left( 1 - \lambda \left( \omega(a) - \frac{\sum_{p \mid a} \log p}{\log y} \right) \right) \\
	&\leq \sum_{a \in S(\mathcal{A}^\ell, z)} 
	\left( 1 - \lambda \left( \omega(a) - \frac{\log a}{\log y} \right) \right),
\end{align*}
and to replace \( \omega(a) \) with \( \Omega(a) \), we must account for the contribution from primes \( z \leq p < y \) with \( p^2 \mid a \).
Now, using the bound on elements in \( \mathcal{A}^\ell \) from~\eqref{bound_elements}, we obtain
\begin{align}
	S(x) 
	&\leq \sum_{a \in S(\mathcal{A}^\ell, z)} 
	\left( 1 - \lambda \left( \omega(a) - \frac{\log(c_\ell x^{\ell - 1})}{\log y} \right) \right) \nonumber \\
	&\leq \left( 1 + \frac{\log c_\ell}{\delta \log x} + \frac{\ell - 1}{\delta} \right) 
	\#\left\{ a \in S(\mathcal{A}^\ell, z) : \omega(a) < \frac{1}{\lambda} + \frac{\ell - 1}{\delta} \right\}. \label{omega}
\end{align}

We now show that the contribution from elements \( a \in S(\mathcal{A}^\ell, z) \) divisible by \( p^2 \) for some \( z \leq p \leq x^{1/2} \) is negligible. Moreover, terms with \( p \geq y \) contribute negatively due to the weight \( w_p \leq 0 \), regardless of whether multiplicities are counted. Hence, we may   replace \( \omega(a) \) with \( \Omega(a) \) in~\eqref{omega}.

Suppose \( z < q \leq   x^{1/2}  \) is a prime such that \( q^2 \mid N(\Phi_\ell(\alpha)) = \Phi_\ell(\alpha)\Phi_\ell(\overline{\alpha}) \) for some \( \alpha \in \mathbb{Z}[i] \) with \( N(\alpha) = p \) a splitting prime. We may assume \( q \neq \ell \).

If \( q \) splits, then \( \pi_1 \pi_2 \mid \Phi_\ell(\alpha) \) with prime ideals \( \pi_1, \pi_2 \in \mathbb{Z}[i] \) satisfying \( N(\pi_1) = N(\pi_2) = q \). So there exists \( \beta \in \mathbb{Z}[i] \) with \( N(\beta) < N(\pi_1\pi_2) = q^2 \) such that
\[
\Phi_\ell(\beta) \equiv 0 \pmod{\pi_1 \pi_2}, \quad \text{and} \quad \alpha \equiv \beta \pmod{\pi_1 \pi_2}.
\]
Hence,
\[
q^2 = N(\pi_1 \pi_2) \mid N(\alpha - \beta) \leq (|\alpha| + |\beta|)^2 \leq (\sqrt{x} + q)^2 \leq 4x.
\]

If \( q \) is inert, then \( q \mid \Phi_\ell(\alpha) \), so there exists \( \gamma \in \mathbb{Z}[i] \) with \( N(\gamma) < q^2 \) such that
\[
\Phi_\ell(\gamma) \equiv 0 \pmod{q}, \quad \text{and} \quad \alpha \equiv \gamma \pmod{q},
\]
and again,
\[
q^2 = N(q) \mid N(\alpha - \gamma) \leq (|\alpha| + |\gamma|)^2 \leq 4x.
\]

In both cases, for fixed \( q \), the number of such \( \alpha \in \mathbb{Z}[i] \) is bounded by the number of \( \theta \in \mathbb{Z}[i] \) with \( N(\theta) \leq 4x \) and \( q^2 \mid N(\theta) \), which satisfies
\[
\sum_{\substack{n \leq 4x \\ q^2 \mid n}} \tau(n) \ll \frac{x}{q^2} \log x.
\]
Summing over \( z < q \leq   x^{1/2} \), the total number of elements in \( \mathcal{A}^\ell \) divisible by \( q^2 \) is bounded by
\[
\sum_{ z < q \leq   x^{1/2} } \sum_{\substack{n \leq 4x \\ q^2 \mid n}} \tau(n) 
\ll x \log x \sum_{q > z} \frac{1}{q^2} 
\ll \frac{x \log x}{z}.
\]		
Thus, we may replace \( \omega(a) \) with \( \Omega(a) \) in~\eqref{omega}, and interpret \( S(x) \) as
\begin{equation}\label{Omega}
	S(x) \leq \left( 1 + \frac{\log c_\ell}{\delta \log x} + \frac{\ell - 1}{\delta} \right) 
	\#\left\{ a \in S(\mathcal{A}^\ell, z) \;\middle|\; \Omega(a) < \frac{1}{\lambda} + \frac{\ell - 1}{\delta} \right\}.
\end{equation}

Taking the level of distribution \( D = x^{1/4 - \epsilon} \) for small \( \epsilon > 0 \), as justified by~\eqref{error_d}, and applying the linear sieve~\cite[Theorem~11.12 with \( \kappa = 1 \)]{FI2010}, we obtain for sufficiently large \( x \),
\[
S(x) \geq V(z)\, \operatorname{li}(x)  \, G(\theta, \delta, \lambda),
\]
where
\begin{equation}\label{G}
	G(\theta, \delta, \lambda) = f\left( \frac{\alpha}{\theta} \right) 
	- \lambda \int_{\theta}^{\delta} F\left( \frac{\alpha - t}{\theta} \right) 
	\left( 1 - \frac{t}{\delta} \right) \frac{dt}{t}, \qquad \alpha = \tfrac{1}{4} - \epsilon,
\end{equation}
and \( f \), \( F \) are the standard lower and upper bound functions of the linear sieve, given by
\begin{equation}\label{f}
	f(s) = \frac{2 e^\gamma \log(s - 1)}{s}, \qquad \text{for } 2 \leq s \leq 4,
\end{equation}
\begin{equation}\label{F}
	F(s) = 
	\begin{cases}
		\displaystyle \frac{2 e^\gamma}{s}, & 1 \leq s \leq 3, \\[8pt]
		2 e^\gamma, & 0 < s < 1.
	\end{cases}
\end{equation}

Note that for \( F(s) \) with \( 0 < s < 1 \), we use the trivial upper bound \( F(1) \). For larger values of \( s \), the functions are extended using the differential-difference equations from~\cite[p.~235]{FI2010}. In our case, it suffices to use \( F \) and \( f \) in the small ranges specified in \eqref{f} and \eqref{F}, provided the parameters are chosen appropriately.

Taking \( \alpha = \frac{1}{4.01} \), we find that  
\[
G\left( \frac{1}{15}, \frac{1}{4.1}, \frac{1}{1.3} \right) = 0.1341 > 0,
\]
which corresponds to the numerical bound in ~\eqref{Omega}  
\[
\Omega(a) \leq  \left\lfloor \frac{1}{\lambda} +\frac{\ell-1}{\delta}\right\rfloor
 = \left\lfloor 4.1(\ell - 1) + 1.2\right\rfloor.
\]
Note that the choice of \( \lambda \), \( \theta \), and \( \delta \) is not unique. However, it is important to ensure \( \delta < \tfrac{1}{4} \), so that the argument \( (\alpha - t)/\theta \) in the definition of \( G \) remains positive throughout the integral. Moreover, the functions \( f \) and \( F \) should be evaluated within the valid ranges specified in~\eqref{f} and~\eqref{F}.

The case \( \ell = 2 \) is similar, but slightly simpler. We define the sifting sets
\[
\mathcal{A}^2 := \left\{ \frac{N(\pi + 1)}{4} \;\middle|\; N(\pi) = p \leq x,\ \pi \equiv 1 \pmod{2(1 + i)} \right\},\]
\[
\widetilde{\mathcal{A}^2} := \left\{ \frac{\pi + 1}{2} \;\middle|\; N(\pi) = p \leq x,\ \pi \equiv 1 \pmod{2(1 + i)} \right\}.
\]
The expression in~\eqref{A^l_d} remains valid for \( |\mathcal{A}^2_d| \), and the density function \( g \) defined in~\eqref{eq:g_function} also applies with \( \ell = 2 \).

As before, the number of elements \( a \in S(\mathcal{A}^2, z) \) divisible by \( p^2 \) for some \(  z < q \leq   x^{1/2}  \) is bounded by
\[
\ll \sum_{ z < q \leq   x^{1/2} } \sum_{\substack{n \leq x \\ p^2 \mid n}} \tau(n) \ll \frac{x \log x}{z},
\]
which is negligible. Moreover, since \( N(\alpha + 1)/4 < x \) for \( N(\alpha) \leq x \), no \( a \in S(\mathcal{A}^2, z) \) can be divisible by \( p^2 \) with \( p \geq x^{1/2} \). Thus, the contribution from non-square-free elements is negligible.
We may therefore interpret \( S(x) \) as
\begin{equation}\label{omega_2}
	S(x) \leq \left( 1 + \frac{1}{\delta} \right) 
	\#\left\{ a \in S(\mathcal{A}^2, z) \;\middle|\; \omega(a) < \frac{1}{\lambda} + \frac{1}{\delta},\; a \text{ square-free} \right\}.
\end{equation}

	Similarly, applying the linear sieve with level of distribution \( D = x^{1/4 - \epsilon} \) for some small \( \epsilon > 0 \), we obtain for sufficiently large \( x \),
	\[
	S(x) \geq V(z)\, \operatorname{li}(x)\, G(\theta, \delta, \lambda),
	\]
	where the function \( G \) is as defined in~\eqref{G}. Taking the same parameters,
	\[
	(\alpha, \theta, \delta, \lambda) = \left( \frac{1}{4.01}, \frac{1}{15}, \frac{1}{4.1}, \frac{1}{1.3} \right),
	\]
the inequality in~\eqref{omega_2} becomes
	\[
	\omega(a) \leq \left\lfloor \frac{1}{\lambda} + \frac{1}{\delta} \right\rfloor = 5.
	\]
\end{proof}
	
\begin{cor}\label{correct}
	Let \( E/\mathbb{Q} \) be the elliptic curve defined by  
\(	y^2 = x^3 - x.\)
	Then for sufficiently large \( x \), we have
	\[
	\# \left\{ p \leq x \;\middle|\; p \equiv 1 \pmod{4},\ \frac{|E(\mathbb{F}_p)|}{8}= P_5 \right\} \gg \frac{x}{(\log x)^2},
	\]
	where \( P_5 \) denotes a product of at most \( 5 \) distinct prime factors.
\end{cor}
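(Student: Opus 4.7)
The plan is to run the $\ell=2$ linear sieve of Theorem~\ref{higher_order} verbatim on the sequence
\[
\mathcal{A}^1 := \left\{ N\!\left(\tfrac{\pi-1}{2(1+i)}\right) \;\middle|\; N(\pi)=p\leq x,\ \pi\equiv 1\pmod{2(1+i)} \right\},
\]
which parametrizes the values $|E(\mathbb{F}_p)|/8$ for splitting primes $p\equiv 1\pmod 4$. Since $\pi$ primary forces $2(1+i)\mid \pi-1$, the lift $\widetilde{\mathcal{A}^1}:=\{(\pi-1)/(2(1+i))\}$ is a well-defined multiset of Gaussian integers, and divisibility of norms translates into ideal divisibility in $\mathbb{Z}[i]$ via Lemma~\ref{lemma_Mobius} and inclusion-exclusion over ideals. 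In the notation of Section~\ref{W(x)} this is precisely the marginal $\mathcal{W}_{d,1}$ already studied inside the vector sieve for Theorem~\ref{almost_triple}.

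Following the derivation of \eqref{A^l_d}, I obtain $|\mathcal{A}^1_d|=g(d)\,\mathrm{li}(x)+r(d)$, where $g=h_1=h(\cdot,1)$ is the multiplicative function from \eqref{def:h_values}: explicitly, $g(2)=\tfrac12$, $g(p)=\tfrac{2}{p-1}-\tfrac{1}{(p-1)^2}$ for $p\equiv 1\pmod 4$, and $g(p)=\tfrac{1}{p^2-1}$ for $p\equiv 3\pmod 4$. Mertens' theorem in progressions yields $V(z):=\prod_{p<z}(1-g(p))\sim c/\log z$ for some $c>0$, so the linear sieve hypothesis holds with dimension $\kappa=1$. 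Proposition~\ref{Bombierri-number field_3}, combined with the reduction in Lemma~\ref{lemma2}, handles the remainder up to level $D=x^{1/4-\epsilon}$. I then apply the weighted linear sieve
\[
S(x):=|S(\mathcal{A}^1,z)|-\lambda\sum_{z\leq p<y}w_p\,|S((\mathcal{A}^1)_p,z)|,\qquad w_p=1-\tfrac{\log p}{\log y},
\]
with the same choice $(z,y,\lambda)=(x^{1/15},x^{1/4.1},1/1.3)$ as in the $\ell=2$ case of Theorem~\ref{higher_order}. The lower-bound linear sieve \cite[Theorem~11.12, $\kappa=1$]{FI2010} gives $S(x)\geq G(\tfrac{1}{15},\tfrac{1}{4.1},\tfrac{1}{1.3})V(z)\,\mathrm{li}(x)$ with $G>0$, and the weighted inequality \eqref{omega_2} then forces $\omega(a)\leq\lfloor 1/\lambda+1/\delta\rfloor=5$.

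The verification that non-square-free elements contribute negligibly is immediate: every element of $\mathcal{A}^1$ is $\leq x/8$, so no prime $p>x^{1/2}$ can satisfy $p^2\mid a$, and the range $x^{1/15}<p\leq x^{1/2}$ is controlled by the same divisor-sum tail estimate as in \eqref{square_free_3.1}. Hence there is no serious obstacle; the only point requiring attention is the density at~$2$, which differs from the $\ell=2$ sieve of $\mathcal{A}^2$ (where $g(2)=0$) because $(\pi-1)/(2(1+i))$ can itself be divisible by $(1+i)$, giving $g(2)=\tfrac12$. This change affects only the implicit constant in $V(z)$, leaving the sieve dimension, the level of distribution, and the final lower bound of order $x/(\log x)^2$ unchanged.
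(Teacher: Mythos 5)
Your proposal is correct and mirrors the paper's own proof: both apply the $\ell=2$ weighted linear sieve of Theorem~\ref{higher_order} to the sequence $\mathcal{A}^1$, with the same density function (differing from that of $\mathcal{A}^2$ only in that $g(2)=\tfrac12$ rather than $0$), the same level of distribution $D=x^{1/4-\epsilon}$ from Proposition~\ref{Bombierri-number field_3}, and the same parameters $(\theta,\delta,\lambda)=(1/15,1/4.1,1/1.3)$. Your observation that $\mathcal{A}^1_d$ coincides with the marginal $\mathcal{W}_{d,1}$, so that $g=h(\cdot,1)$ from \eqref{def:h_values}, is a tidy shortcut that bypasses re-deriving the density via the ideal-level inclusion-exclusion of \eqref{A^l_d}, but it arrives at the same function and the same conclusion.
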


	\begin{proof}
	The proof proceeds similarly to the case \( \ell = 2 \) in Theorem \ref{higher_order}. Define
	\[
	\mathcal{A}^1 := \left\{ N\left(\frac{\pi - 1}{2(1 + i)}\right) \;\middle|\; N(\pi) = p \leq x,\; \pi \equiv 1 \pmod{2(1 + i)} \right\}
	\]
	\[
	\widetilde{\mathcal{A}^1} := \left\{ \frac{\pi - 1}{2(1 + i)} \;\middle|\; N(\pi) = p \leq x,\; \pi \equiv 1 \pmod{2(1 + i)} \right\}.
	\]
	Then,
	\[
	|\widetilde{\mathcal{A}}^1_\mathfrak{a}| = \widetilde{g}(\mathfrak{a}) \operatorname{li}(x) + \widetilde{r}(\mathfrak{a}),
	\]
where \( \widetilde{g} \) is a multiplicative function supported on square-free ideals, with values at prime ideals given by
\begin{align*}
	\widetilde{g}(1 + i) &= \frac{1}{2}, \\
	\widetilde{g}(\pi) &= \frac{1}{p - 1}, \quad \text{if } N(\pi) = p \text{ and } p \text{ splits}, \\
	\widetilde{g}(q) &= \frac{1}{q^2 - 1}, \quad \text{if } q \text{ is inert}.
\end{align*}
Moreover, for any \( A > 0 \), there exists \( B = B(A) > 0 \) such that the error term satisfies
\begin{equation*} 
	\sum_{ N(\mathfrak{a}) \leq \frac{x^{1/2}}{(\log x)^B} } \widehat{\mu}^2(\mathfrak{a})|\widetilde{r}(\mathfrak{a})| \ll _\ell \frac{x}{(\log x)^A}.
\end{equation*}
As in \eqref{A^l_d}, for \( d \mid P(z) \), we have  
\[
|\mathcal{A}^1_d| = \sum_{k \mid d} \mu(k) \sum_{\mathfrak{a} \mid k} \widehat{\mu}(\mathfrak{a}) |\widetilde{\mathcal{A}}^1_{\mathfrak{a}}| = \sum_{d(\mathfrak{a}) = d} \widehat{\mu}(\mathfrak{a}) \mu(d)\, |\widetilde{\mathcal{A}}^1_{\mathfrak{a}}| = g(d) \operatorname{li}(x) + r(d),
\]
where
\[
g(d) = \sum_{d(\mathfrak{a}) = d} \widehat{\mu}(\mathfrak{a}) \mu(d) \widetilde{g}(\mathfrak{a}),  
\] 
\begin{equation}\label{r(d)_error}
	r(d) = \sum_{d(\mathfrak{a}) = d} \widehat{\mu}(\mathfrak{a}) \mu(d) \widetilde{r}(\mathfrak{a}).
\end{equation}
	The multiplicative function \( g \) is the same as in the case \( \ell = 2 \) from \eqref{eq:g_function}, except at \( p = 2 \), here we have 
	\[
	g(p) =
	\begin{cases}
		\frac{1}{2}, & p = 2, \\
		\frac{2}{p - 1} - \frac{1}{(p - 1)^2}, & p \equiv 1 \pmod{4}, \\
		\frac{1}{p^2 - 1}, & p \equiv 3 \pmod{4}.
	\end{cases}
	\]
As before, the error term \( r(d) \) satisfies the bound in~\eqref{error_d}: for any \( A > 0 \), there exists \( B = B(A) \) such that
\[
\sum_{\substack{d < x^{1/4} / (\log x)^B}} \mu^2(d)\, |r(d)| \ll \frac{x}{(\log x)^A}.
\]

The remainder of the proof then follows exactly as in Proposition~\ref{higher_order}, using the same level of distribution \( D = x^{1/4 - \epsilon} \) and the same parameters for \( G \) in~\eqref{G}.
	\end{proof}

\begin{rem}\label{mistake}
The setup in Corollary~\ref{correct} is identical to that in~\cite{IU_elliptic}. However, the first equality on~\cite[p.~823]{IU_elliptic} is incorrect, leading to an overestimate of the error term in~\cite[(3.8)]{IU_elliptic} and subsequently in~\cite[(3.9)]{IU_elliptic}.

Let \( d \mid \prod_{p < z} p \), and write \( d = 2^e d_1 d_2 \), where all prime divisors of \( d_1 \) are congruent to \( 1 \pmod{4} \), and all prime divisors of \( d_2 \) are congruent to \( 3 \pmod{4} \).

For an ideal in \( \mathbb{Z}[i] \) of the form \( \mathfrak{d} = (1 + i)^e \alpha_1 b_2 \), with \( N(\alpha_1)   = a_1 \mid d_1 \) and \(b_2\mid d_2\),~\cite{IU_elliptic} defines \( a_{\mathfrak{d}} := 2^e a_1 b_2 \).  
For an integer \( k = 2^e k' \) with \( 2 \nmid k' \),~\cite{IU_elliptic} defines \( \kappa := (1 + i)^e k' \).
Then~\cite{IU_elliptic} claims:
\begin{equation}\label{wrong}
	\sum_{k \mid d} \mu(k) \sum_{\mathfrak{d} \mid \kappa} \widehat{\mu}(\mathfrak{d}) \widehat{r}_{\mathfrak{d}}(x) 
	= \sum_{a_{\mathfrak{d}} = d} \widehat{\mu}(\mathfrak{d}) \widehat{r}_{\mathfrak{d}}(x) \mu(a_{\mathfrak{d}}).
\end{equation}
(Note: \( \kappa \) may be replaced by \( k \), since \( \mathfrak{d} \mid \kappa \iff \mathfrak{d} \mid k \), and likewise \( (\alpha, \kappa) = (1) \iff (\alpha, k) = (1) \); see~\eqref{order_summation}.)

	The issue lies in using \( a_{\mathfrak{d}} \) instead of the proper quantity \( d(\mathfrak{d}) \), the smallest integer divisible by \( \mathfrak{d} \). For example, if \( d = p = \pi \overline{\pi} \), the left-hand side of \eqref{wrong} equals
	\[
	\widehat{r}_{\pi}(x) + \widehat{r}_{\overline{\pi}}(x) - \widehat{r}_{\pi\overline{\pi}}(x),
	\]
	whereas the right-hand side misses the last term:
	\[
	\widehat{r}_{\pi}(x) + \widehat{r}_{\overline{\pi}}(x).
	\]
	
	Correcting this requires replacing \( a_{\mathfrak{d}} \) with \( d(\mathfrak{d}) \), the smallest integer divisible by \( \mathfrak{d} \), as defined on p.~\pageref{def_d_a}. With this fix, the right-hand side of \eqref{wrong} should be replaced by \eqref{r(d)_error},  and Corollary~\ref{correct} matches the result obtained using this framework together with a weighted sieve.
	
	This mistake is critical: it incorrectly suggests a level of distribution \( x^{1/2-\epsilon} \), while the actual level is \( x^{1/4-\epsilon} \). Since Chen’s method depends on this, the argument in \cite{IU_elliptic} does not establish a \( P_2 \) result.
	
\begin{comment}
		Indeed, for \( d(\mathfrak{d}) = d \), we have
	\[
	d \leq N(\mathfrak{d}) \leq d^2,
	\]
	so the condition \( N(\mathfrak{d}) < x^{1/2-\epsilon} \) requires
	\[
	d < x^{1/4 - \epsilon/2}.
	\]
	In contrast, if \( a_{\mathfrak{d}} = d = 2^e d_1 d_2 \), then
	\[
	N(\mathfrak{d}) = 2^e d_1 d_2^2,
	\]
	which leads to the incorrect bound
	\[
	2^e d_1 d_2^2 < x^{1/2 - \epsilon}.
	\]
	Neglecting the small contribution from large \( d_2 \), this yields an erroneous level of distribution \( d < x^{1/2 - \epsilon} \) as in \cite{IU_elliptic}.
\end{comment}
\end{rem}

	\section{Almost prime order of $\frac{|E(\mathbb{F}_{p^\ell})|}{ |E(\mathbb{F}_p)|}$ for non-CM elliptic curves}\label{section_non-CM}
In this section, we study the almost prime property of
\(\frac{|E(\mathbb{F}_{p^\ell})|}{|E(\mathbb{F}_p)|}\)
for a non-CM elliptic curve \( E/\mathbb{Q} \) and prime \( \ell \geq 3 \), paralleling the results in Section~\ref{discussion} for CM curves.  
The case \( \ell = 2 \) is simpler and resembles the setting of \( |E(\mathbb{F}_p)| \) in~\cite{steuding}.  
Our approach is based on the Chebotarev density theorem, as outlined in Section~\ref{chebotarev}.

Let \( E  \) be an elliptic curve defined over \(\mathbb{Q}\) with conductor \( N_E \), and let \( \mathbb{Q}(E[n]) \) denote the \( n \)-division field, obtained by adjoining the coordinates of all \( n \)-torsion points of \( E \). Define \( G_n := \operatorname{Gal}(\mathbb{Q}(E[n])/\mathbb{Q}) \) as its Galois group.
Let \( \rho_{E, n} : G_n \hookrightarrow \operatorname{Aut}(E[n]) \) be the mod-\( n \) Galois representation.  
By choosing a basis for the \( n \)-torsion, this induces a linear representation
\[
\rho_{E, n} : G_n \hookrightarrow \operatorname{GL}_2(\mathbb{Z}/n\mathbb{Z}).
\]

	By Serre \cite{galois_serre}, if \( E \) does not have CM, then there exists a positive integer \( M_E \) such that
	\begin{align}\label{group_G_n}
		&G_n \cong \operatorname{GL}_2(\mathbb{Z}/n\mathbb{Z}), \quad \text{for } (n, M_E) = 1,\nonumber \\ 
		&G_{mn} \cong G_m \times \operatorname{GL}_2(\mathbb{Z}/n\mathbb{Z}), \quad \text{for } (n, M_E) = (m, n) = 1.
	\end{align}

	Let \( p, q \) be primes such that \( p \nmid q N_E \). Denote by \( \sigma_p \) the Frobenius at \( p \) in \( G_{q^n} \), which satisfies  
	\[
	\operatorname{tr} \rho_{E,q^n} (\sigma_p) \equiv a_p \pmod{q^n}, \quad \text{and} \quad \det \rho_{E,q^n} (\sigma_p) \equiv p \pmod{q^n}.
	\]
Hence, the characteristic polynomial of \( \rho_{E,q^n}(\sigma_p) \) is  
\[
T^2 - a_p T + p \pmod{q^n}.
\]
When the context is clear that we are working modulo \( q^n \), we write \( \sigma_p \) for \( \rho_{E,q^n}(\sigma_p) \) to simplify notation.

	Let \( \pi_p \) and \( \overline{\pi}_p \) be the complex conjugate roots of the equation \[
	T^2 - a_p T + p  =0 .
	\]
	Consider the imaginary quadratic field \( \mathbb{Q}(\sqrt{\Delta_p}) \), where \( \Delta_p = a_p^2 - 4p\neq 0 \), then $\pi_p,\overline{\pi}_p$ lie in the ring of integers $\mathcal{O}_{\mathbb{Q}(\sqrt{\Delta_p})}$ of $\mathbb{Q}(\sqrt{\Delta_p})$.   Hence, a prime \( q \in \mathbb{Z} \) has one of the following factorizations:  
	\[
	q\mathcal{O}_{\mathbb{Q}(\sqrt{\Delta_p})}= \mathfrak{q} \quad \text{or} \quad q\mathcal{O}_{\mathbb{Q}(\sqrt{\Delta_p})}= \mathfrak{q}_1\mathfrak{q}_2.
	\]

Suppose \( \ell\mid q^2 - 1 \) or \( \ell= q \), so that the \( \ell\)-th cyclotomic polynomial \( \Phi_\ell(x) \) has roots in \( \mathbb{F}_{q^2} \). Let \( e_{q,1}, e_{q,2}, \dots, e_{q,\ell-1} \) denote the roots of \( \Phi_\ell \) in \( \mathbb{F}_{q^2} \). Then:
\begin{itemize}
	\item If \(\ell\mid q - 1 \), the \( e_{q,i} \) lie in \( \mathbb{F}_q \) and are the primitive \(\ell\)-th roots of unity in \( \mathbb{F}_q \).
	\item If \(\ell\mid q + 1 \), the \( e_{q,i} \) are the primitive \(\ell\)-th roots of unity in \( \mathbb{F}_{q^2} \setminus \mathbb{F}_q \).
	\item If \( q =\ell\), then \( e_{q,i} = 1 \).
\end{itemize}

	Define \( C_q \) to be the union of conjugacy classes in \( G_q \) as follows:  
	\begin{equation}\label{def_conjugacy}
		C_q := \{ \sigma \in G_q \mid e_{q,j}^2 - e_{q,j} \operatorname{tr} \sigma + \det \sigma = 0 \text{ in } \mathbb{F}_{q^2}, \quad \text{for some } 1 \leq j \leq \ell-1 \}.
	\end{equation}
In other words, \( C_q \) consists of those matrices \( \sigma \in G_q \) for which \( e_{q,i} \in \mathbb{F}_{q^2}\) is an eigenvalue of \( \sigma \) for some \( 1 \leq i \leq\ell- 1 \).

	In particular, if \( q =\ell\), then  
	\begin{equation} \label{def_conjugacy_l}
		C_\ell := \{ \sigma \in G_\ell \mid 1 - \operatorname{tr} \sigma + \det \sigma = 0 \text{ in } \mathbb{F}_{\ell } \}.
	\end{equation}

	We now characterize the divisibility condition as follows. 
	\begin{lemma}\label{lemma_condition_1}
		Let \( p, q \) be primes such that \( p \nmid qN_E \), and let \(\ell\geq 3 \) be an odd prime. If \( q \mid \frac{|E(\mathbb{F}_{p^\ell})|}{|E(\mathbb{F}_{p})|} \), then we must have \(\ell\mid q^2 - 1 \) or \(\ell= q \).  
		\begin{enumerate}
			\item Suppose \(\ell\mid q - 1 \) or \(\ell= q \). Then \( q \mid \frac{|E(\mathbb{F}_{p^\ell})|}{|E(\mathbb{F}_{p})|} \) if and only if \( \sigma_p \in C_q \).   
			\item Suppose \(\ell\mid q + 1 \). Then \( q \mid \frac{|E(\mathbb{F}_{p^\ell})|}{|E(\mathbb{F}_{p})|} \) if and only
			\[
			\sigma_p \in C_q  \quad \text{and} \quad q \text{ is inert in } \mathcal{O}_{\mathbb{Q}(\sqrt{\Delta_p})}.
			\]  
		\end{enumerate}
	\end{lemma}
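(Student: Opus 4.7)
The plan is to convert the integer divisibility $q \mid N(\Phi_\ell(\pi_p))$ into an ideal-theoretic statement in $\mathcal{O}_K$ with $K = \mathbb{Q}(\sqrt{\Delta_p})$, and then reduce it to a congruence on $\pi_p$ modulo a prime ideal $\mathfrak{q}$ above $q$, which translates directly into the Chebotarev condition $\sigma_p \in C_q$. Since
\[
\frac{|E(\mathbb{F}_{p^\ell})|}{|E(\mathbb{F}_p)|}=N(\Phi_\ell(\pi_p))=\Phi_\ell(\pi_p)\,\Phi_\ell(\overline{\pi_p}),
\]
the divisibility $q \mid N(\Phi_\ell(\pi_p))$ is equivalent to $\mathfrak{q}\mid \Phi_\ell(\pi_p)$ for some prime $\mathfrak{q}\subset\mathcal{O}_K$ above $q$, i.e.\ to $\pi_p \bmod \mathfrak{q}$ being a root of $\Phi_\ell$ in the residue field $k_\mathfrak{q}$. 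Since $k_\mathfrak{q}=\mathbb{F}_q$ when $q$ splits or ramifies in $\mathcal{O}_K$ and $k_\mathfrak{q}=\mathbb{F}_{q^2}$ when $q$ is inert, such a root exists only if $\ell\mid q^2-1$ or $\ell=q$ (using $\Phi_\ell(x)=(x-1)^{\ell-1}$ in characteristic $\ell$). This yields the first claim.

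Next I would match $\pi_p \bmod \mathfrak{q}$, which satisfies $x^2 - a_p x + p \equiv 0$ in $k_\mathfrak{q}$, against the roots $e_{q,j}$ of $\Phi_\ell$, identifying ``$e_{q,j}$ is a root of $x^2-a_p x+p$'' with $\sigma_p\in C_q$ via the congruences $\operatorname{tr}\sigma_p \equiv a_p$ and $\det\sigma_p\equiv p\pmod q$. When $\ell\mid q-1$, the $e_{q,j}$ lie in $\mathbb{F}_q$; if $q$ splits or ramifies, then $k_\mathfrak{q}=\mathbb{F}_q$ and $\pi_p\equiv e_{q,j}\pmod\mathfrak{q}$ is equivalent to $e_{q,j}^2-a_pe_{q,j}+p\equiv0\pmod q$, i.e.\ $\sigma_p\in C_q$; if $q$ is inert, then $\Delta_p$ is a nonsquare mod $q$, so $\pi_p\bmod\mathfrak{q}\in\mathbb{F}_{q^2}\setminus\mathbb{F}_q$ cannot equal any $e_{q,j}\in\mathbb{F}_q$, and simultaneously $x^2-a_px+p$ has no root in $\mathbb{F}_q$, so both sides fail. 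The case $\ell=q$ is similar, with the single root $1$ yielding $1-a_p+p\equiv 0\pmod q$, which is precisely \eqref{def_conjugacy_l}. When $\ell\mid q+1$, the $e_{q,j}$ lie in $\mathbb{F}_{q^2}\setminus\mathbb{F}_q$, so a match forces $k_\mathfrak{q}=\mathbb{F}_{q^2}$, hence $q$ inert; conversely, if $\sigma_p\in C_q$, then $x^2-a_px+p$ acquires $e_{q,j}\notin\mathbb{F}_q$ as a root in $\mathbb{F}_{q^2}$, hence is irreducible over $\mathbb{F}_q$, so $q$ is automatically inert and $\pi_p\bmod\mathfrak{q}$ matches one of the two Frobenius-conjugate roots $\{e_{q,j},e_{q,j}^q\}$.

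The main obstacle is the careful bookkeeping of residue fields: one must track when $\pi_p\bmod\mathfrak{q}$ lands in $\mathbb{F}_q$ versus $\mathbb{F}_{q^2}\setminus\mathbb{F}_q$, correctly distinguishing the ramified case $q\mid\Delta_p$ (where $\pi_p\equiv a_p/2\in\mathbb{F}_q$) from the inert case, and ensuring that the location of the $e_{q,j}$ is compatible with this residue. A related subtlety in part (2) is that the hypothesis ``$q$ inert'' is actually implied by $\sigma_p\in C_q$ when $\ell\mid q+1$, so it is retained for emphasis rather than logical necessity.
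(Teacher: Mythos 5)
Your proposal is correct and takes essentially the same approach as the paper's proof: convert $q\mid N(\Phi_\ell(\pi_p))$ to $\mathfrak{q}\mid\Phi_\ell(\pi_p)$ for a prime $\mathfrak{q}$ above $q$, reduce to $\pi_p\bmod\mathfrak{q}$ being a root of $\Phi_\ell$ in the residue field, and match that against the characteristic polynomial of $\sigma_p$ mod $q$, keeping track of whether the residue field is $\mathbb{F}_q$ or $\mathbb{F}_{q^2}$ and whether the $e_{q,j}$ live in $\mathbb{F}_q$ or $\mathbb{F}_{q^2}\setminus\mathbb{F}_q$. Your closing observation — that in part (2) the condition ``$q$ inert in $\mathcal{O}_{\mathbb{Q}(\sqrt{\Delta_p})}$'' is actually forced by $\sigma_p\in C_q$ when $\ell\mid q+1$, since then some $e_{q,j}\notin\mathbb{F}_q$ is a root of $X^2-a_pX+p$ over $\mathbb{F}_q$, which forces irreducibility and hence $q$ inert — is a correct refinement that the paper does not explicitly state, and it would slightly simplify the paper's later bookkeeping with $\chi_d$ and $i_d$.
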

	
	\begin{proof}
	Suppose
	\[
	q \Big| \frac{|E(\mathbb{F}_{p^\ell})|}{|E(\mathbb{F}_p)|} = \Phi_\ell(\pi_p) \Phi_\ell(\overline{\pi_p}).
	\]
	We will show during the proof   that this implies either \(\ell\mid q^2 - 1 \) or \(\ell= q \), based on the splitting of \( \Phi_\ell \) in the corresponding residue fields. In either case, let \( e_{q,1}, \dots, e_{q,\ell-1} \) denote the roots of \( \Phi_\ell \) in \( \mathbb{F}_{q^2} \).
	
Suppose \( q \) either splits or ramifies in \( \mathcal{O}_{\mathbb{Q}(\sqrt{\Delta_p})} \). Then \( \mathfrak{q}_1 \mid \Phi_\ell(\pi_p) \) for some prime ideal \( \mathfrak{q}_1 \) with  \( N(\mathfrak{q}_1) = q \). In this case, there exists \( \alpha \in \mathcal{O}_{\mathbb{Q}(\sqrt{\Delta_p})} \) such that
\[
\Phi_\ell(\alpha) \equiv 0 \pmod{\mathfrak{q}_1} \quad \text{and} \quad \pi_p \equiv \alpha \pmod{\mathfrak{q}_1}.
\]
It follows that \( \Phi_\ell(X) \) splits in \( \mathcal{O}_{\mathbb{Q}(\sqrt{\Delta_p})}/\mathfrak{q}_1 \cong \mathbb{F}_q \), so we must have either \(\ell\mid q - 1 \) or \( q =\ell\).
Therefore,
\[
(\pi_p - \alpha)(\overline{\pi_p} - \alpha) = \alpha^2 - a_p \alpha + p \equiv 0 \pmod{\mathfrak{q}_1},
\]
whose image in \( \mathcal{O}_{\mathbb{Q}(\sqrt{\Delta_p})}/\mathfrak{q}_1 \cong \mathbb{F}_q \) becomes
\[
e_{q,i}^2 - a_p e_{q,i} + p \equiv  e_{q,i}^2 - \operatorname{tr}(\sigma_p)\, e_{q,i} + \det(\sigma_p) \equiv 0\pmod q \quad \text{for some } 1 \leq i \leq\ell- 1,
\]
and hence \( \sigma_p \in C_q \).

	Suppose \( q \) is inert in \( \mathcal{O}_{\mathbb{Q}(\sqrt{\Delta_p})} \), and let \( \mathfrak{q} = q\mathcal{O}_{\mathbb{Q}(\sqrt{\Delta_p})} \). Then \( \mathfrak{q} \mid \Phi_\ell(\pi_p) \), so there exists \( \beta \in \mathcal{O}_{\mathbb{Q}(\sqrt{\Delta_p})} \) such that
	\[
	\Phi_\ell(\beta) \equiv 0 \pmod{\mathfrak{q}} \quad \text{and} \quad \pi_p \equiv \beta \pmod{\mathfrak{q}}.
	\]
	Thus \( \Phi_\ell(X) \) splits in \( \mathbb{F}_{q^2} \), so \(\ell\mid q^2 - 1 \) or \( q =\ell\).
Therefore,
\[
(\pi_p - \beta)(\overline{\pi_p} - \beta) = \beta^2 - a_p \beta + p \equiv 0 \pmod{\mathfrak{q}},
\]
whose image in \( \mathcal{O}_{\mathbb{Q}(\sqrt{\Delta_p})}/\mathfrak{q} \cong \mathbb{F}_{q^2} \) becomes
\[
e_{q,i}^2 - a_p e_{q,i} + p \equiv  e_{q,i}^2 - \operatorname{tr}(\sigma_p)\, e_{q,i} + \det(\sigma_p)\equiv  0 \pmod q \quad \text{for some } 1 \leq i \leq\ell- 1,
\]
and hence \( \sigma_p \in C_q \).

	Note that the case \(\ell\mid p + 1 \) and \( q \mid \frac{|E(\mathbb{F}_{p^\ell})|}{|E(\mathbb{F}_p)|} \) arises only when \( q \) is inert, as shown above. This completes the forward direction of both (1) and (2).
	We now prove the reverse direction.
	
Suppose \(\ell\mid q - 1 \) or \(\ell= q \), and assume
\[
e_{q,i}^2 - e_{q,i} \operatorname{tr} \sigma_p + \det \sigma_p = 0 \quad \text{in } \mathbb{F}_{q^2}, \quad \text{for some } 1 \leq i \leq\ell- 1.
\]
Since \(\ell\mid q - 1 \) or \(\ell= q \), the roots \( e_{q,i} \) lie in \( \mathbb{F}_q \).
Let \( \mathfrak{q} \) be a prime ideal in \( \mathcal{O}_{\mathbb{Q}(\sqrt{\Delta_p})} \) lying above \( q \). By the natural injection
\[
\mathbb{Z}/q\mathbb{Z} \hookrightarrow \mathcal{O}_{\mathbb{Q}(\sqrt{\Delta_p})}/\mathfrak{q},
\]
we may view \( e_{q,i} \) as an element of \( \mathcal{O}_{\mathbb{Q}(\sqrt{\Delta_p})}/\mathfrak{q} \), and lift it to some \( e_{q,i}' \in \mathcal{O}_{\mathbb{Q}(\sqrt{\Delta_p})} \). Then
\[
\Phi_\ell(e_{q,i}') \equiv 0 \pmod{\mathfrak{q}}, \quad \text{and} \quad  e_{q,i}'^2 - e_{q,i}' a_p + p=(\pi_p - e_{q,i}')(\overline{\pi_p} - e_{q,i}') \equiv 0 \pmod{\mathfrak{q}}.
\]
Hence, either \( \mathfrak{q} \mid \Phi_\ell(\pi_p) \) or \( \mathfrak{q} \mid \Phi_\ell(\overline{\pi_p}) \), so \( q \mid N(\Phi_\ell(\pi_p)) \).

Now suppose \(\ell\mid p + 1 \), \( q \) is inert in \( \mathcal{O}_{\mathbb{Q}(\sqrt{\Delta_p})} \), and
\[
e_{q,i}^2 - e_{q,i} \operatorname{tr} \sigma_p + \det \sigma_p = 0 \quad \text{in } \mathbb{F}_{q^2}, \quad \text{for some } 1 \leq i \leq\ell- 1.
\]
Let \( \mathfrak{q} = q\mathcal{O}_{\mathbb{Q}(\sqrt{\Delta_p})} \), and let \( e_{q,i}' \in \mathcal{O}_{\mathbb{Q}(\sqrt{\Delta_p})} \) be a lift of \( e_{q,i} \in \mathbb{F}_{q^2} \cong \mathcal{O}_{\mathbb{Q}(\sqrt{\Delta_p})}/\mathfrak{q} \). Then,
\[
\Phi_\ell(e_{q,i}') \equiv 0 \pmod{\mathfrak{q}} \quad \text{and} \quad e_{q,i}'^2 - e_{q,i}' a_p + p  = (\pi_p - e_{q,i}')(\overline{\pi_p} - e_{q,i}') \equiv 0 \pmod{\mathfrak{q}}.
\]
Hence, either \( \mathfrak{q} \mid \Phi_\ell(\pi_p) \) or \( \mathfrak{q} \mid \Phi_\ell(\overline{\pi_p}) \), so \( q \mid N(\Phi_\ell(\pi_p)) \).

	\end{proof}
	
Lemma~\ref{lemma_condition_1}   determines the density function and thus the order of the main term in the sieve estimate. While condition (2) depends on whether \( q \) is inert in \( \mathcal{O}_{\mathbb{Q}(\sqrt{\Delta_p})} \)—a less accessible property compared to the the CM case—the main term is governed by the density functions  at primes from condition (1). Primes satisfying condition (2) affect only the constant in the lower bound.

We now establish  divisibility conditions by prime squares, which will be used to bound the contribution from elements in the sieving set divisible by a prime square.

Assume \(\ell\mid q^2 - 1 \) or \(\ell= q \). Recall that \( e_{q,1}, \dots, e_{q,\ell-1} \) are the roots of the \(\ell\)-th cyclotomic polynomial \( \Phi_\ell \) in \( \mathbb{F}_{q^2} \), which are primitive when \(\ell\mid q^2 - 1 \).

Let $q\neq \ell$, define \( C_{q^2} \) to be the union of conjugacy classes in \( G_{q^2} \) consisting of elements \( \sigma \in G_{q^2} \) such that either:
\begin{itemize}
	\item the characteristic polynomial \( X^2 - \operatorname{tr}(\sigma) X + \det(\sigma) \pmod q\) splits as 
	\( (X - e_{q,i})(X - e_{q,j}) \) in \( \mathbb{F}_{q^2} \) for some \( 1 \leq i, j \leq\ell- 1 \), or
	\item \(\ell\mid q-1\) and  \( e_{q,k}'^2 - \operatorname{tr}(\sigma)\, e_{q,k}' + \det(\sigma) = 0 \pmod {q^2}\)  for some \( 1 \leq k \leq\ell- 1 \), where 
	\( e_{q,k}' \in \mathbb{Z}/q^2 \) is the unique lift of some root \( e_{q,k} \in \mathbb{F}_{q } \).
\end{itemize}
That is, \( C_{q^2} \) consists of matrices \( \sigma \in G_{q^2} \) whose characteristic polynomial modulo \( q \) has roots \( e_{q,i}, e_{q,j} \in \mathbb{F}_{q^2} \), or  has a root  \( e_{q,k}' \in \mathbb{Z}/q^2\mathbb{Z} \), for some \( 1 \leq i, j, k \leq\ell- 1 \).

\begin{lemma}\label{lemma_condition_2}
Let \( p, q \) be primes with \( p \nmid qN_E \), and let \(\ell\geq 3 \) be an odd prime. Suppose \( q \neq\ell\). If 
	\(	q^2 \mid \frac{|E(\mathbb{F}_{p^\ell})|}{|E(\mathbb{F}_{p})|},\)
	then \(\ell\mid q^2 - 1 \), and \( \sigma_p \in C_{q^2} \).
\end{lemma}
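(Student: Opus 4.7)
The plan is to pass from the norm divisibility $q^2 \mid N(\Phi_\ell(\pi_p)) = \Phi_\ell(\pi_p)\Phi_\ell(\overline{\pi_p})$ to an ideal-divisibility statement in $\mathcal{O}_K := \mathcal{O}_{\mathbb{Q}(\sqrt{\Delta_p})}$, and then to split into cases according to how $q$ decomposes in $\mathcal{O}_K$: inert, ramified, or split. The conclusion $\ell \mid q^2 - 1$ follows immediately from Lemma~\ref{lemma_condition_1} applied to $q \mid N(\Phi_\ell(\pi_p))$, together with the hypothesis $q \neq \ell$. The remaining task is to produce one of the two bullets in the definition of $C_{q^2}$.

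When $q\mathcal{O}_K = \mathfrak{q}$ is inert, complex conjugation fixes $\mathfrak{q}$, so $v_q(N(\Phi_\ell(\pi_p))) = 2 v_{\mathfrak{q}}(\Phi_\ell(\pi_p))$ and the hypothesis forces $\mathfrak{q} \mid \Phi_\ell(\pi_p)$; then $\pi_p$ and $\overline{\pi_p}$ reduce modulo $\mathfrak{q}$ to a Frobenius-conjugate pair $\{e_{q,i}, e_{q,i}^q\}$ of roots of $\Phi_\ell$ in $\mathbb{F}_{q^2}$, so the characteristic polynomial of $\sigma_p$ modulo $q$ factors as $(X - e_{q,i})(X - e_{q,i}^q)$, placing $\sigma_p$ in the first bullet. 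When $q\mathcal{O}_K = \mathfrak{q}^2$ is ramified, the analogous norm computation yields $q \mid \Phi_\ell(\pi_p)$ in $\mathcal{O}_K$; conjugation acts trivially on $\mathcal{O}_K/\mathfrak{q} \cong \mathbb{F}_q$, so a root $e_{q,i} \in \mathbb{F}_q$ of $\Phi_\ell$ appears (forcing $\ell \mid q-1$), and ramification $q \mid \Delta_p$ makes it a double root of the characteristic polynomial modulo $q$, again matching the first bullet with $i = j$.

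The substantive case is the split one, $q\mathcal{O}_K = \mathfrak{q}_1 \mathfrak{q}_2$: here $v_q(N(\Phi_\ell(\pi_p))) = v_{\mathfrak{q}_1}(\Phi_\ell(\pi_p)) + v_{\mathfrak{q}_2}(\Phi_\ell(\pi_p)) \geq 2$, so after possibly relabeling we are either in the situation $\mathfrak{q}_1 \mathfrak{q}_2 = q \mid \Phi_\ell(\pi_p)$, which produces the first bullet exactly as in the inert case (reading off the two roots of $\Phi_\ell$ from the projections $\mathcal{O}_K/q \cong \mathbb{F}_q \times \mathbb{F}_q$), or in the situation $\mathfrak{q}_1^2 \mid \Phi_\ell(\pi_p)$, which will produce the second bullet. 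For the latter, the crucial ingredient is the ring isomorphism $\mathbb{Z}/q^2\mathbb{Z} \cong \mathcal{O}_K/\mathfrak{q}_1^2$, coming from $\mathfrak{q}_1^2 \cap \mathbb{Z} = q^2\mathbb{Z}$ (since $v_{\mathfrak{q}_1}(q) = 1$) and a count of cardinalities. Under this identification $\pi_p$ maps to an element $e \in \mathbb{Z}/q^2\mathbb{Z}$ with $\Phi_\ell(e) \equiv 0 \pmod{q^2}$; because $q \neq \ell$ makes $\Phi_\ell$ separable modulo $q$, Hensel's lemma pins $e$ down as the unique lift $e_{q,k}'$ of a root $e_{q,k} \in \mathbb{F}_q$ of $\Phi_\ell$ (so here $\ell \mid q - 1$). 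The identity $(\pi_p - e)(\overline{\pi_p} - e) = e^2 - a_p e + p$ then shows this integer lies in $\mathfrak{q}_1^2 \cap \mathbb{Z} = q^2\mathbb{Z}$, which is precisely the second-bullet condition $e_{q,k}'^2 - \operatorname{tr}(\sigma_p)\, e_{q,k}' + \det(\sigma_p) \equiv 0 \pmod{q^2}$.

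The main obstacle is the final split sub-case: establishing the isomorphism $\mathcal{O}_K/\mathfrak{q}_1^2 \cong \mathbb{Z}/q^2\mathbb{Z}$ cleanly and invoking Hensel with the separability input $q \neq \ell$, since these are what actually convert ideal-level divisibility into the mod-$q^2$ congruence required by the second bullet. Everything else is bookkeeping: tracking how complex conjugation permutes the primes above $q$ in each splitting type, and recording that the roots of $\Phi_\ell$ lie in $\mathbb{F}_q$ or in $\mathbb{F}_{q^2}\setminus\mathbb{F}_q$ exactly when $\ell \mid q-1$ or $\ell \mid q+1$ respectively.
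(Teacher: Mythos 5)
Your proposal is correct and follows essentially the same route as the paper: factor the norm divisibility through the decomposition of $q$ in $\mathcal{O}_{\mathbb{Q}(\sqrt{\Delta_p})}$, reduce $\pi_p$ and $\overline{\pi_p}$ modulo prime powers above $q$, and match the resulting congruences on the characteristic polynomial of $\sigma_p$ against the two bullets defining $C_{q^2}$. The only differences are cosmetic — you make the case split via $\mathfrak{q}$-adic valuations explicit, spell out the isomorphism $\mathcal{O}_K/\mathfrak{q}_1^2 \cong \mathbb{Z}/q^2\mathbb{Z}$ via $\mathfrak{q}_1^2 \cap \mathbb{Z} = q^2\mathbb{Z}$, and invoke Hensel's lemma by name where the paper just observes separability of $\Phi_\ell \bmod q$ — but the underlying argument is the same.
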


	\begin{proof}
	  
Suppose \( q^2 \mid \frac{|E(\mathbb{F}_{p^\ell})|}{|E(\mathbb{F}_{p})|} = \Phi_\ell(\pi_p) \Phi_\ell(\overline{\pi_p}) \). By Lemma~\ref{lemma_condition_1}, this implies \(\ell\mid q^2 - 1 \).

Assume \( q \) splits in \( \mathcal{O}_{\mathbb{Q}(\sqrt{\Delta_p})} \), so that
\(q \mathcal{O}_{\mathbb{Q}(\sqrt{\Delta_p})} = \mathfrak{q}_1 \mathfrak{q}_2,\)
with distinct primes \( \mathfrak{q}_1 \) and \( \mathfrak{q}_2 \).   If \( \mathfrak{q}_1 \mathfrak{q}_2 \mid \Phi_\ell(\pi_p) \), then \( q \mid \Phi_\ell(\pi_p) \), which implies \( q \mid \Phi_\ell(\overline{\pi_p}) \) as well, so \( \mathfrak{q}_1 \mathfrak{q}_2 \mid \Phi_\ell(\overline{\pi_p}) \).

Thus, we may assume one of the following two cases:
\begin{enumerate}
	\item \(\mathfrak{q}_1 \mid \Phi_\ell(\pi_p)\) and \(\mathfrak{q}_1 \mid \Phi_\ell(\overline{\pi_p})\),
	\item \(\mathfrak{q}_1^2 \mid \Phi_\ell(\pi_p)\).
\end{enumerate}
In either cases,   \( \Phi_\ell \) splits in \( \mathbb{F}_q \),   so \(\ell\mid q - 1 \).

In the first case, there exist \( \alpha, \beta \in \mathcal{O}_{\mathbb{Q}(\sqrt{\Delta_p})} \) such that
\[
\Phi_\ell(\alpha) \equiv 0 \pmod{\mathfrak{q}_1}, \quad \Phi_\ell(\beta) \equiv 0 \pmod{\mathfrak{q}_1}, \quad 
\pi_p \equiv \alpha \pmod{\mathfrak{q}_1}, \quad \overline{\pi_p} \equiv \beta \pmod{\mathfrak{q}_1}.
\]
Then, for some \( 1 \leq i, j \leq\ell- 1 \), 
\[
\alpha \equiv e_{q,i} \pmod{\mathfrak{q}_1}, \quad \beta \equiv e_{q,j} \pmod{\mathfrak{q}_1}.
\]
Hence, 
\[
X^2 -  a_p  X + p   = (X - \pi_p)(X - \overline{\pi_p}) \equiv (X - e_{q,i})(X - e_{q,j}) \pmod{\mathfrak{q}_1},,
\]
and in \( \mathbb{F}_q \),
\[
X^2 - \operatorname{tr}(\sigma_p) X + \det(\sigma_p) \equiv X^2 - a_p  X + p\equiv  (X - e_{q,i})(X - e_{q,j}) \pmod{q}.
\]

In the second case, there exists \( \gamma \in \mathcal{O}_{\mathbb{Q}(\sqrt{\Delta_p})} \) such that  
\[
\Phi_\ell(\gamma) \equiv 0 \pmod{\mathfrak{q}_1^2} \quad \text{and} \quad \pi_p \equiv \gamma \pmod{\mathfrak{q}_1^2}.
\]
Since \( \Phi_\ell(\gamma) \equiv 0\pmod{\mathfrak{q}_1} \), it follows that  
  \(\gamma \) corresponds to the unique lift \( e_{q,i}'\in \mathbb{Z}/q^2\mathbb{Z}\cong \mathcal{O}_{\mathbb{Q}(\sqrt{\Delta_p})}/\mathfrak{q}_1^2 \) of some root \( e_{q,i}\in \mathbb{F}_q \).  
Therefore,  
\[
\pi_p^2 - a_p \pi_p + p \equiv e_{q,i}'^2 - a_p e_{q,i}' + p \pmod{\mathfrak{q}_1^2},
\]
and hence
\[
e_{q,i}'^2 - \operatorname{tr}(\sigma_p)\, e_{q,i}' + \det(\sigma_p) \equiv e_{q,i}'^2 - a_p e_{q,i}' + p \pmod{q^2}.
\]

Now suppose \( q \) ramifies in \( \mathcal{O}_{\mathbb{Q}(\sqrt{\Delta_p})} \), so that  
\(q \mathcal{O}_{\mathbb{Q}(\sqrt{\Delta_p})} = \mathfrak{q}^2.\)
In this case, \(\ell\mid q - 1 \), and both \( \mathfrak{q} \mid \Phi_\ell(\pi_p) \) and \( \mathfrak{q} \mid \Phi_\ell(\overline{\pi_p}) \). Hence,
\[
\pi_p \equiv e_{q,i} \pmod{\mathfrak{q}}, \quad \overline{\pi_p} \equiv e_{q,j} \pmod{\mathfrak{q}} \quad \text{for some } 1 \leq i, j \leq\ell- 1.
\]
Since \( q \) ramifies, we   have \( \pi_p \equiv \overline{\pi_p} \pmod{\mathfrak{q}} \), it must be that \( i = j \). Therefore, the characteristic polynomial of \( \sigma_p \) satisfies
\[
X^2 - \operatorname{tr}(\sigma_p) X + \det(\sigma_p) \equiv  (X - e_{q,i})^2 \pmod q
\]

Finally, suppose 
$q$ is inert,   so that  
\(q \mathcal{O}_{\mathbb{Q}(\sqrt{\Delta_p})} = \mathfrak{q}.\)
Then \( \mathfrak{q} \mid \Phi_\ell(\pi_p) \) and \( \mathfrak{q} \mid \Phi_\ell(\overline{\pi_p}) \), which implies \(\ell\mid q^2 - 1 \), and  for some \(1 \leq i, j \leq\ell- 1\), 
\[
\pi_p \equiv e_{q,i}' \pmod{\mathfrak{q}}, \quad \overline{\pi_p} \equiv e_{q,j}' \pmod{\mathfrak{q}}     ,
\]
where $e_{q,i}'$ and  $e_{q,j}'$  are lifts of \(e_{q,i}\) and \(e_{q,j}\).
Thus, in \( \mathbb{F}_{q^2} \cong \mathcal{O}_{\mathbb{Q}(\sqrt{\Delta_p})}/\mathfrak{q} \), the characteristic polynomial of \( \sigma_p \) satisfies
\[
X^2 - \operatorname{tr}(\sigma_p) X + \det(\sigma_p) \equiv (X - e_{q,i})(X - e_{q,j}) \pmod{q}.
\]
	\end{proof}

	Recall that \( \pi_C(x, L/K) \) is defined in Section \ref{chebotarev} as  
	\[
	\pi_C(x, L/K) = \# \{ \mathfrak{p} \subset K \mid \mathfrak{p} \text{ is unramified in } L, \, \sigma_{\mathfrak{p}} \in C, \, N_{K/\mathbb{Q}}(\mathfrak{p}) \leq x \}.
	\]
	By the definition of \( \pi_C(x, L/K) \) and the criterion of Néron–Ogg–Shafarevich \cite[Theorem 7.1]{silverman}, we have
	\begin{equation}\label{pi_C_q}
	\pi_{C_q}(x, L_q/\mathbb{Q}) 
= \# \{ p \leq x \mid p \nmid q N_E, \, \sigma_p \in C_q  \} .
	\end{equation}

Let \( d \) be square-free. If every prime factor \( q \mid d \) satisfies \(\ell\mid q^2 - 1 \) or \( q =\ell\), define  
\[
C_d := \left\{ \sigma \in G_d \;\middle|\; \forall\, q \mid d,\ \exists\, 1 \leq i \leq\ell- 1 \text{ such that } e_{q,i}^2 - e_{q,i} \operatorname{tr}(\sigma) + \det(\sigma) \equiv 0 \pmod{q} \right\}.
\]
Otherwise, set \( C_d := \emptyset \).

To account for condition~(2) in Lemma~\ref{lemma_condition_1}, define
\[
\chi_d(p) =
\begin{cases}
	1 & \text{if every prime } q \mid d \text{ with }\ell\mid q + 1 \text{ is inert in } \mathcal{O}_{\mathbb{Q}(\sqrt{\Delta_p})}, \\
	0 & \text{otherwise}.
\end{cases}
\]
Then set
\[
i_d := \frac{
	\#\left\{ p \leq x \;\middle|\; p \nmid dN_E,\ \sigma_p \in C_d,\ \chi_d(p) = 1 \right\}
}{
	\#\left\{ p \leq x \;\middle|\; p \nmid dN_E,\ \sigma_p \in C_d \right\}
}.
\]
In particular, for a prime \( q \),
\[
i_q =
\begin{cases}
	1, & \text{if }\ell\mid q - 1 \text{ or } q = \ell, \\
	\in [0,1], & \text{if }\ell\mid q + 1, \\
	0, & \text{otherwise}.
\end{cases}
\]
Therefore, by Lemma~\ref{lemma_condition_1} and equation~\eqref{pi_C_q}, we have
\[
\#\left\{ p \leq x \;\middle|\; p \nmid qN_E,\ q \mid \frac{|E(\mathbb{F}_{p^\ell})|}{|E(\mathbb{F}_{p})|} \right\}
= i_q \cdot \pi_{C_q}(x, L_q/ K).
\]	
 
If we sieve the set  
\[
\left\{ \frac{|E(\mathbb{F}_{p^\ell})|}{|E(\mathbb{F}_p)|} : p \leq x,\; p \nmid N_E \right\},
\]
Theorem~\ref{function_g} suggests a density function \( h \), defined on square-free integers \( d \) by
\[
h(d) = \frac{i_d\, |C_d|}{|G_d|}.
\]

However, \( h(d) \) may fail to be multiplicative when \( (d, M_E) \neq 1 \), since the fields \( \mathbb{Q}(E[m]) \) and \( \mathbb{Q}(E[n]) \) are not necessarily linearly disjoint for \( (m, M_E) \neq 1 \) and \( (n, M_E) \neq 1 \); see \cite[Example 1.1]{zyw} and \cite[p.~103–105]{wu}.

To address this, we instead sieve the   set
\[
\left\{ \frac{|E(\mathbb{F}_{p^\ell})|}{|E(\mathbb{F}_p)|} : p \leq x,\; p \nmid N_E,\; \left( \frac{|E(\mathbb{F}_{p^\ell})|}{|E(\mathbb{F}_p)|}, M_E \right) = 1 \right\}.
\]
Accordingly, we introduce a correction factor to account for the proportion of primes \( p \leq x \), \( p \nmid N_E \), for which
\(\left( \frac{|E(\mathbb{F}_{p^\ell})|}{|E(\mathbb{F}_p)|}, M_E \right) = 1.\)
We may assume \( M_E \) is square-free, define
\begin{equation}\label{def_C_E}
	c_E := \sum_{d \mid M_E} \mu(d)\, \frac{i_d\, |C_d|}{|G_d|}.
\end{equation}

We now proceed with the proof of Theorem~\ref{intro_non-CM}, which we restate below.

\begin{theorem}[Theorem~\ref{intro_non-CM}]\label{non-CM_theorem}
	Let \( E/\mathbb{Q} \) be a non-CM elliptic curve, and suppose \( c_E \neq 0 \) (see \eqref{def_C_E}). For any odd prime \(\ell\geq 3 \), define
\[
m_\ell = \left\lfloor 5.1 (\ell - 1) + 1.2 \right\rfloor, \quad 
n_\ell = \left\lfloor 8.1 (\ell - 1) + 0.5 \right\rfloor.
\]
Assume GRH for all division fields \( \mathbb{Q}(E[n]) \). Then, for sufficiently large \( x \),
\[
\#\left\{ p \leq x : \omega\left( \frac{|E(\mathbb{F}_{p^\ell})|}{|E(\mathbb{F}_p)|} \right) \leq m_\ell \right\} \gg_E \frac{x}{(\log x)^2},
\]
and
\[
\#\left\{ p \leq x : \Omega\left( \frac{|E(\mathbb{F}_{p^\ell})|}{|E(\mathbb{F}_p)|} \right) \leq n_\ell \right\} \gg_E \frac{x}{(\log x)^2}.
\]
In particular, if \( E \) is a Serre curve, then \( c_E \neq 0 \), so the result holds for almost all non-CM curves.
\end{theorem}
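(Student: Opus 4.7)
The plan is to adapt the weighted linear sieve of Theorem~\ref{higher_order} to the non-CM setting, substituting the effective Chebotarev density theorem under GRH (Theorems~\ref{theorem_ch1} and \ref{theorem_ch2} with $\theta = 1/2$) for Huxley's Bombieri--Vinogradov estimate. I would work with the sifting set
\[
\mathcal{A}^\ell := \Bigl\{ \tfrac{|E(\mathbb{F}_{p^\ell})|}{|E(\mathbb{F}_p)|} \;\Big|\; p \leq x,\ p \nmid N_E,\ \gcd\bigl(\tfrac{|E(\mathbb{F}_{p^\ell})|}{|E(\mathbb{F}_p)|},\,M_E\bigr) = 1 \Bigr\},
\]
the coprimality to $M_E$ being essential because, by Serre's open image theorem \eqref{group_G_n}, the division fields $\mathbb{Q}(E[q])$ for primes $q \nmid M_E$ are linearly disjoint over $\mathbb{Q}$, so the resulting density function is multiplicative on squarefree moduli coprime to $M_E$. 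The proportion of $\mathcal{A}^\ell$ surviving this coprimality restriction is $c_E$ from \eqref{def_C_E}, and the hypothesis $c_E \neq 0$ ensures that the subsequent lower bound is non-vacuous; for a Serre curve, an index-two calculation on the Serre subgroup of $\mathrm{GL}_2(\widehat{\mathbb{Z}})$ yields $c_E \neq 0$.

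Next I would estimate $|\mathcal{A}^\ell_d|$ for squarefree $d$ coprime to $M_E$ by translating the condition $d \mid |E(\mathbb{F}_{p^\ell})|/|E(\mathbb{F}_p)|$ into the Chebotarev condition $\sigma_p \in C_d \subset G_d$, corrected by the splitting indicator $\chi_d$ for primes $q \mid d$ with $\ell \mid q+1$ (Lemma~\ref{lemma_condition_1}). Since $C_d$ is supported on matrices with a prescribed eigenvalue modulo each $q \mid d$, I would invoke Theorem~\ref{theorem_ch1} to reduce the Chebotarev problem to a Borel-type subgroup $H_d \subset G_d$, then apply Theorem~\ref{theorem_ch2} to $L_d / L_d^{H_d}$ to obtain
\[
|\mathcal{A}^\ell_d| = h(d)\operatorname{li}(x) + r(d),
\]
where $h$ is multiplicative with $h(q) \asymp (\ell-1)/q$ on primes $q$ with $\ell \mid q^2-1$ or $q = \ell$ (and $0$ otherwise), and where, using $|C_d| \ll d^3$, the conductor--discriminant bound $\log d_{L_d} \ll d\log(dN_E)$, and $[L_d : \mathbb{Q}] \ll d^4$, one gets $r(d) \ll d^{3/2} x^{1/2} \log(d N_E x)$. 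Summing over squarefree $d < D$ yields a total remainder $\ll D^{5/2} x^{1/2}(\log x)^{O(1)}$, which is $o(x/(\log x)^A)$ provided $D = x^{\alpha_\omega}$ with $\alpha_\omega$ slightly smaller than $1/5$; the gap between this and the CM level $1/4$ is precisely what produces the coefficient $5.1$ (versus $4.1$) in $m_\ell$.

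Armed with this density and level of distribution, I would run the weighted linear sieve exactly as in the proof of Theorem~\ref{higher_order}: with $z = x^\theta$, $y = x^\delta$, $w_p = 1 - \log p / \log y$, the expression
\[
S(x) := |S(\mathcal{A}^\ell, z)| - \lambda \sum_{z \leq p < y} w_p\,|S(\mathcal{A}^\ell_p, z)|
\]
satisfies the upper bound
\[
S(x) \leq \Bigl(1 + \tfrac{\log c_\ell}{\delta \log x} + \tfrac{\ell-1}{\delta}\Bigr) \#\Bigl\{ a \in S(\mathcal{A}^\ell, z) : \omega(a) < \tfrac{1}{\lambda} + \tfrac{\ell-1}{\delta} \Bigr\}
\]
together with the linear sieve lower bound $S(x) \geq V(z) \operatorname{li}(x)\, G(\theta, \delta, \lambda)$ of the same functional shape as \eqref{G}, with $\alpha$ replaced by $\alpha_\omega$. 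Choosing $(\theta, \delta, \lambda)$ to make $G > 0$ while minimizing $\lfloor 1/\lambda + (\ell-1)/\delta\rfloor$ delivers the $\omega$-bound $m_\ell$. For the $\Omega$-statement, the contribution of $a \in \mathcal{A}^\ell$ with $q^2 \mid a$ for some $z \leq q \leq c_\ell x^{(\ell-1)/2}$ is bounded via Lemma~\ref{lemma_condition_2} applied to $C_{q^2} \subset G_{q^2}$: a Chebotarev estimate gives at most $x |C_{q^2}|/|G_{q^2}| + O(|C_{q^2}|^{1/2}x^{1/2}\log(qx))$ per $q$, and summing this over the required range forces a smaller effective level $\alpha_\Omega < \alpha_\omega$ (approximately $1/8$) to keep the non-squarefree contribution negligible, explaining the larger coefficient $8.1$ in $n_\ell$.

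The main obstacle is securing sharp enough levels of distribution $D$ for both sieves. A direct application of Theorem~\ref{theorem_ch2} without the subgroup reduction of Theorem~\ref{theorem_ch1} inflates $r(d)$ by a factor of $|C_d|^{1/2}$ rather than $|C_d/C_{d,H_d}|^{1/2}$, which would make $D$ too small to beat the $(\ell-1)$ that appears with $1/\delta$ in the sieve inequality; reducing to a product of Borel subgroups is therefore crucial. For the $\Omega$-count, the delicate step is bounding the image of $C_{q^2}$ under Chebotarev uniformly in $q$, since the conductor $M(L_{q^2}/\mathbb{Q})$ grows with $q^2$ and the error term $|C_{q^2}|^{1/2} x^{1/2}\log(q^2 x)$ dominates the main term $x|C_{q^2}|/|G_{q^2}|$ once $q$ exceeds a modest power of $x$. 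Balancing these two competing constraints, together with the linear-sieve condition \eqref{linear_sieve_condition} for a density of effective dimension slightly above $1$, is what pins down the particular numerical constants $5.1(\ell-1)+1.2$ and $8.1(\ell-1)+0.5$ in the statement.
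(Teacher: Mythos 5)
Your proposal matches the paper's proof in all essential respects: the coprimality restriction to $M_E$ and the factor $c_E$ to handle non-multiplicativity of the density function, the translation of divisibility into Chebotarev conditions on $C_d$ via Lemma~\ref{lemma_condition_1} with the splitting indicator $\chi_d$, the Borel-subgroup reduction (Theorem~\ref{theorem_ch1}) before applying Theorem~\ref{theorem_ch2} to obtain $r(d)\ll d^{3/2}x^{\theta}\log(dx)$ and hence level of distribution $\alpha<\tfrac{2}{5}(1-\theta)\approx 1/5$, and the weighted linear sieve with the non-square-free contribution controlled via Lemma~\ref{lemma_condition_2}. One small clarification: for the $\Omega$-statement the sieve level $\alpha=1/5-\epsilon$ is unchanged; what is actually tightened is the upper cutoff $y=x^{\delta}$ of the sieve weight (forcing $\delta<1/8$ so that $\sum_{z<q\leq y}q^{3}x^{1/2}\log x$ stays negligible), and it is this constraint on $\delta$—not a smaller level of distribution—that directly produces the coefficient $8.1$ in place of $5.1$.
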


\begin{rem}
	As noted in~\eqref{alpha_theta}, the level of distribution depends on \( \theta \). Our choice \( \theta = \tfrac{1}{2} \) in the \(\theta\)-quasi-GRH corresponds to GRH, but the argument also holds for \( \theta = \tfrac{1}{2} + \epsilon \) with any \( \epsilon > 0 \), given appropriate adjustments to the level of distribution and sieve parameters.
	In particular, if \( \epsilon \) is sufficiently small, the conclusion of Theorem~\ref{non-CM_theorem} remains valid by continuity of the sieve bounds.
\end{rem}

	\begin{proof} 
		We will apply the linear sieve to the set  
		\[
		\mathcal{A} = \left\{ \frac{|E(\mathbb{F}_{p^\ell})|}{|E(\mathbb{F}_p)|} : p \leq x,\; p \nmid N_E, \;   \left(\frac{|E(\mathbb{F}_{p^\ell})|}{|E(\mathbb{F}_p)|}, M_E \right) =1 \right\}.
		\]
		To proceed, we need to determine \( \frac{|C_q|}{|G_q|} \) for \( q \nmid M_E \), with $\ell\mid q^2-1$ or $\ell=q$.  
		
		Suppose \( q \nmid M_E \). Then we have \( G_q \cong \operatorname{GL}_2(\mathbb{F}_q) \), so  
		\[
		|G_q| = (q+1)q(q-1)^2 .
		\]

		Recall that the  conjugacy classes in \( \operatorname{GL}_2(\mathbb{F}_q) \) can be classified into the following categories:
		
		1. Diagonalizable matrices with distinct eigenvalues.
		Each set of distinct  eigenvalues \( \{a, b\} \) corresponds to a unique conjugacy class with a representative  
		\[
		g_{a,b} = \begin{bmatrix} a & 0 \\ 0 & b \end{bmatrix}.
		\]  
		Each conjugacy class has size \( (q+1)q \).
		
		2. Diagonalizable matrices with a single eigenvalue.  
		Each eigenvalue \( a \) corresponds to a representative  
		\[
		g_a = \begin{bmatrix} a & 0 \\ 0 & a \end{bmatrix},
		\]  
		which remains fixed under conjugation. Thus, each conjugacy class has size \( 1 \).
		
		3. Non-diagonalizable matrices with eigenvalues in \( \mathbb{F}_q \).  
		In this case, the eigenvalues must be equal, and we have a representative  
		\[
		\begin{bmatrix} a & 1 \\ 0 & a \end{bmatrix}.
		\]  
		Each such conjugacy class has size \( (q+1)(q-1) \).
		
		4. Non-diagonalizable matrices with eigenvalues \( \lambda_1, \lambda_2 \) in the quadratic extension \( \mathbb{F}_{q^2} \).  
		Here, \( \lambda_2 \) is uniquely determined by \( \lambda_1 \) and is given by \( F_q(\lambda_1) \), where \( F_q \) is the \( q \)-th power Frobenius automorphism in \( \operatorname{Gal}(\mathbb{F}_{q^2}/\mathbb{F}_q) \).  
		Each set of eigenvalues  \( \{\lambda_1,F_q(\lambda_1 ) \}\) defines a unique conjugacy class of size \( q(q-1) \).

		If \(\ell\mid q - 1 \), then all \( c_i \) are in \( \mathbb{F}_q \), and the conjugacy classes in \( C_q \) correspond to types (1), (2), and (3). We have  
		\begin{align}\label{C_q_l|q-1}
			|C_q|& = \left( (\ell-1)(q-l) + \binom{\ell-1}{2} \right) (q+1)q + (\ell-1) + (\ell-1) (q+1) (q-1)\nonumber \\
			&= (\ell-1)   \left( q^3 - \left(\frac{\ell}{2} -1\right)q^2 - \left(\frac{\ell}{2} +1\right)q \right).
		\end{align}

		Otherwise, if \(\ell\mid q + 1 \), then all \( c_i \) are in \( \mathbb{F}_{q^2} \), and the conjugacy classes in \( C_q \) come from type (4). Each choice of \( c_i \) as one of the eigenvalues uniquely determines the other eigenvalue, and since the order of the two eigenvalues does not matter, we obtain  
		\[
		|C_q| = \frac{\ell-1}{2} q(q-1) .
		\]

		Finally, if \( q=\ell \), then \( c_i = 1 \) for all \( 1 \leq i \leq \ell-1 \). The set \( C_q \) consists of the conjugacy classes of matrices in \( \operatorname{GL}_2(\mathbb{F}_q) \) that have at least one eigenvalue equal to 1.  
		Counting the relevant conjugacy classes from types (1), (2), and (3), we obtain  
		\[
		|C_q| = (q-2) q(q+1) + 1 + (q+1)(q-1) = q(q^2 -2).
		\]

	We now apply Theorems~\ref{theorem_ch1} and~\ref{theorem_ch2} to estimate \( \pi_{C_q}(x, L_q/\mathbb{Q}) \), following the approach of~\cite[pp.~108–109]{wu} and~\cite[p.~345]{steuding}.
	
	\begin{lemma}\label{function_g}
		Let \( E/\mathbb{Q} \) be an elliptic curve without complex multiplication, and let \( L_n = \mathbb{Q}(E[n]) \) denote the \( n \)-division field.  
		Assuming the \( \theta \)-quasi-GRH for \( L_q \), we have
		\[
		\pi_{C_q}(x, L_q/\mathbb{Q}) = \frac{|C_q|}{|G_q|} \operatorname{li}(x) + O_E\left(q^{3/2} x^\theta \log qx\right).
		\]
	\end{lemma}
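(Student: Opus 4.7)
The plan is to apply Theorem~\ref{theorem_ch2} to $\pi_{C_q}(x, L_q/\mathbb{Q})$, following the two-step route flagged in the text just before the lemma. Concretely, I would take $K=\mathbb{Q}$, $L = L_q = \mathbb{Q}(E[q])$, $G = G_q$, $D = C_q$, and the trivial normal subgroup $H = \{1\}$. The hypothesis $HD \subseteq D$ is then automatic, the $\theta$-quasi-GRH for $L_q$ is the standing assumption of the lemma, and AHC for $G_q$ is either built into the blanket GRH assumption for all division fields (as in \cite{steuding, wu}) or handled via Theorem~\ref{theorem_ch1} below. Theorem~\ref{theorem_ch2} then yields
\[
\pi_{C_q}(x, L_q/\mathbb{Q}) = \frac{|C_q|}{|G_q|}\operatorname{li}(x) + O\!\left(|C_q|^{1/2}\, x^\theta\, n_\mathbb{Q}\, \log\bigl(M(L_q/\mathbb{Q})\,x\bigr)\right),
\]
so the whole task reduces to bounding $|C_q|$ and $M(L_q/\mathbb{Q})$.

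The first estimate is essentially read off from the case analysis carried out just before the lemma: $|C_q| = (\ell-1)\bigl(q^3 - (\tfrac{\ell}{2}-1)q^2 - (\tfrac{\ell}{2}+1)q\bigr)$ when $\ell\mid q-1$, $|C_q| = \tfrac{\ell-1}{2}q(q-1)$ when $\ell\mid q+1$, and $|C_q| = q(q^2 - 2)$ when $q = \ell$. In every case $|C_q|\ll_\ell q^3$, so $|C_q|^{1/2}\ll_\ell q^{3/2}$, matching the $q$-exponent in the target error.

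For the second estimate, I would use $M(L_q/\mathbb{Q}) = [L_q:\mathbb{Q}]\,d_\mathbb{Q}^{1/n_\mathbb{Q}}\,\prod_{p\in P(L_q/\mathbb{Q})}p$, with $d_\mathbb{Q}=1$ and $[L_q:\mathbb{Q}]\le |\mathrm{GL}_2(\mathbb{F}_q)|\le q^4$. By the Néron--Ogg--Shafarevich criterion, every rational prime ramifying in $L_q=\mathbb{Q}(E[q])$ divides $qN_E$, hence $\prod_{p\in P(L_q/\mathbb{Q})}p \le qN_E$. Combining, $\log M(L_q/\mathbb{Q})\ll \log q + \log N_E \ll_E \log q$. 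Substituting this together with $n_\mathbb{Q}=1$ and $|C_q|^{1/2}\ll_\ell q^{3/2}$ into the display above produces the claimed $O_E\!\left(q^{3/2}x^\theta\log(qx)\right)$ error.

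The hard part will be the AHC issue for the non-abelian group $G_q\subseteq \mathrm{GL}_2(\mathbb{F}_q)$, which the choice $H=\{1\}$ demands. To bypass any unproven hypothesis beyond GRH, I would first apply Theorem~\ref{theorem_ch1} with an abelian subgroup $H\subseteq G_q$ (so that AHC for $H$ and its quotients is automatic) and then invoke Theorem~\ref{theorem_ch2} inside $H$ with $H'=\{1\}$. The delicate point is balancing: $H$ must be chosen so that neither the reduction error $\tfrac{|H||C_q|[L_q^H:\mathbb{Q}]x^{1/2}}{|G_q||C_H|}$ from Theorem~\ref{theorem_ch1} nor the contribution $\tfrac{|C_q|}{|C_H|^{1/2}}\, x^\theta\log M(L_q/L_q^H)$ coming from the nested Theorem~\ref{theorem_ch2} exceeds $q^{3/2}x^\theta\log(qx)$. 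The explicit sizes of $|C_q|$, $|H|$, $|C_H|$ produced by the case analysis, combined with the same Néron--Ogg--Shafarevich control on ramified primes applied to the intermediate field $L_q^H$, should supply exactly the margin needed to make the balance work.
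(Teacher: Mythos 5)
Your first route (Theorem~\ref{theorem_ch2} applied directly with $K=\mathbb{Q}$, $L=L_q$, $H=\{1\}$) needs AHC for the non-abelian group $G_q\subseteq\mathrm{GL}_2(\mathbb{F}_q)$, which is open, and you correctly flag this. The problem is that your fallback — Theorem~\ref{theorem_ch1} with an \emph{abelian} subgroup $H$, followed by Theorem~\ref{theorem_ch2} with $H'=\{1\}$ — cannot reach the claimed $q^{3/2}x^\theta\log(qx)$ bound, because you have given up savings in two independent places.

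First, in Theorem~\ref{theorem_ch1} the middle error term simplifies to $\tfrac{|C|}{|C_H|}x^{1/2}$ (since $\tfrac{|H|}{|G|}\cdot[L^H:\mathbb{Q}]=1$). The largest abelian subgroups of $\mathrm{GL}_2(\mathbb{F}_q)$ (split or nonsplit torus) have order $\asymp q^2$, and for such $H$ one finds $|C_H|\asymp_\ell q$ at best (singleton conjugacy classes meeting $C_q$), so $\tfrac{|C_q|}{|C_H|}x^{1/2}\asymp q^2 x^{1/2}$, already worse than $q^{3/2}x^\theta$ when $\theta=\tfrac12$. Worse, if $\ell\mid q+1$ the split torus misses $C_q$ entirely and $C\cap H=\emptyset$, so even the hypothesis of Theorem~\ref{theorem_ch1} fails. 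Second, taking $H'=\{1\}$ in the nested application of Theorem~\ref{theorem_ch2} forfeits the factor $|H'|^{-1/2}$ and forces $n_K=[L_q^H:\mathbb{Q}]=[G_q:H]\asymp q^2$, producing an error $|C_H|^{1/2}\cdot q^2\cdot x^\theta\log\asymp q^{5/2}x^\theta\log$, again too large. You cannot repair the second issue by taking $H'=H$, since $H'D\subseteq D$ then forces $D$ to be all of $H$ (abelian groups have singleton conjugacy classes), which is false.

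The paper's choice is the Borel subgroup $B_q\subset G_q$ of upper-triangular matrices — which is \emph{not} abelian, and that is the point. It is a maximal subgroup of index $q+1$, so $[L_q^{B_q}:\mathbb{Q}]\asymp q$, and $C_q(B):=B_q\cap C_q$ (a union of $B_q$-conjugacy classes) has size $\asymp q^2$, giving $\tfrac{|C_q|}{|C_q(B)|}x^{1/2}\asymp qx^{1/2}$ from Theorem~\ref{theorem_ch1}. Then Theorem~\ref{theorem_ch2} is applied with $G=B_q$, $H=U_q$ (the unipotent radical, of order $q$) and $D=C_q(B)$: $U_q$ is normal in $B_q$, the quotient $B_q/U_q$ is the diagonal torus and hence abelian, so AHC holds for $L_q^{U_q}/L_q^{B_q}$; moreover $U_q D\subseteq D$ because multiplying an upper-triangular matrix on the left by a unipotent matrix preserves its eigenvalues. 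This yields $(|D|/|U_q|)^{1/2}\,n_K\,x^\theta\log\asymp q^{1/2}\cdot q\cdot x^\theta\log=q^{3/2}x^\theta\log$, exactly the claimed bound. So the key idea your proposal is missing is that $H$ in Theorem~\ref{theorem_ch1} should be taken as large as possible while still admitting a nontrivial normal subgroup with abelian quotient — the Borel is the optimal choice — and that the normal subgroup in Theorem~\ref{theorem_ch2} should then be $U_q$ rather than $\{1\}$, to capture the $|H|^{-1/2}$ savings.
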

	
		\begin{proof}
	Let \( B_q \subset G_q \) denote the Borel subgroup of upper triangular matrices, and let \( C_q(B) \) be the union of conjugacy classes generated by \( B_q \cap C_q \). Recall that \( G_q \) is a subgroup of \( \mathrm{GL}_2(\mathbb{F}_q) \), and by \cite{galois_serre}, we have
	\[
	\frac{q^4}{\log \log q} \ll [L_q : \mathbb{Q}] = |G_q| \ll q^4.
	\]
Moreover, by~\cite[Proposition~6]{serre}, the discriminant satisfies
\[
\log d_{L_q} \ll [L_q : \mathbb{Q}] \log\big( [L_q : \mathbb{Q}]\, q N_E \big).
\]
	Applying Theorem~\ref{theorem_ch1}, we obtain
	\begin{equation}\label{wu_1}
		\pi_{C_q}(x, L_q/\mathbb{Q}) = \frac{|B_q|}{|G_q|} \cdot \frac{|C_q|}{|C_q(B)|} \cdot \pi_{C_q(B)}(x, L_q/L_q^{B_q})
		+ O \left( q \log (q N_E) + q x^{1/2} \right).
	\end{equation}

Next, consider the subgroup of unipotent matrices \( U_q \subset B_q \). Since \( U_q \) is normal in \( B_q \) and \( B_q/U_q \) is the abelian group of diagonal matrices over \( \mathbb{F}_q \), the Artin Holomorphy Conjecture (AHC) holds for the extension \( L_q^{U_q}/L_q^{B_q} \). 
Applying Theorem~\ref{theorem_ch2} with \( G = B_q \), \( H = U_q \), and \( D = C_q(B) \), and assuming the \( \theta \)-quasi-GRH for \( L_q \), we obtain
\begin{align}\label{wu_2}
	\pi_{C_q(B)}(x, L_q/L_q^{B_q}) 
	&= \frac{|C_q(B)|}{|B_q|} \operatorname{li}(x) 
	+ O\left( q^{1/2} x^\theta [L_q^{B_q}:\mathbb{Q}] \log(M(L_q/L_q^{B_q}) x) \right) \notag \\
	&= \frac{|C_q(B)|}{|B_q|} \operatorname{li}(x) 
	+ O\left( q^{3/2} x^\theta \log(q N_E x) \right).
\end{align}

The claim then follows by combining equations~\eqref{wu_1} and~\eqref{wu_2}.
		\end{proof}
Therefore, by Lemma~\ref{function_g}, for square-free \( d \) with \( (d, M_E) = 1 \), we have
\[	\mathcal{A}_d 
= i_d\, \pi_{C_d}(x, L_d/\mathbb{Q}) 
+ \sum_{n \mid M_E} \mu(n)\, i_{dn}\, \pi_{C_{dn}}(x, L_{dn}/\mathbb{Q}) 
= c_E\, g(d)\, \operatorname{li}(x) + r(d),\]
and \( \mathcal{A}_d = \emptyset \) when \( (d, M_E) \neq 1 \).
Here, \( g \) is a multiplicative function supported on square-free integers, with \( g(q) = \frac{|C_q|}{|G_q|} \) for \( q \nmid M_E \), and \( g(q) = 0 \) for \( q \mid M_E \). Explicitly,
\[
g(q) =
\begin{cases}
	\displaystyle \frac{\varphi(\ell)\left(q^2 - \left( \frac{\ell}{2} - 1 \right) q - \left( \frac{\ell}{2} + 1 \right)\right)}{(q+1)(q-1)^2}, & \text{if } q \equiv 1 \pmod{\ell},\; q \nmid M_E, \\[10pt]
	
	\displaystyle \frac{i_q \varphi(\ell)}{2(q^2 - 1)}, & \text{if } q \equiv -1 \pmod{\ell},\; q \nmid M_E, \\[10pt]
	
	\displaystyle \frac{\ell^2 - 2}{(\ell+1)(\ell-1)^2}, & \text{if } q = \ell,\; q \nmid M_E, \\[10pt]
	
	0, & \text{if } q \mid M_E,
\end{cases}
\]
and the error term satisfies
\[
r(d) = O_E\left( d^{3/2} x^\theta \log(dx) \right).
\]		
By Mertens' estimate, we have
\[
\sum_{\substack{q < x \\ q \equiv 1 \!\!\!\!\pmod{\ell}\\q\nmid M_E}} \frac{q^2 - \left( \frac{\ell}{2} - 1 \right) q - \left( \frac{\ell}{2} + 1 \right)}{(q+1)(q-1)^2} 
= \frac{1}{\ell - 1} \log \log x + N_E(\ell) + O\left( \frac{1}{\log x} \right),
\]
for some constant \( N_E(\ell) \). Define
\begin{align}\label{def_W}
	W(x) :=& \prod_{q < x} \left(1 - g(q)\right) \nonumber \\
	=&\ (1 - g(\ell)) 
	\prod_{\substack{q < x \\ q \equiv 1 \!\!\!\!\pmod{\ell} \\ q \nmid M_E}} 
	\left( 1 - \frac{\varphi(\ell)\left( q^2 - \left( \frac{\ell}{2} - 1 \right) q - \left( \frac{\ell}{2} + 1 \right) \right)}{(q+1)(q-1)^2} \right) \nonumber \\
	&\quad \times 
	\prod_{\substack{q < x \\ q \equiv -1 \!\!\!\!\pmod{\ell} \\ q \nmid M_E}} 
	\left( 1 - \frac{i_q \varphi(\ell)}{2(q^2 - 1)} \right) \nonumber \\
	\sim&\ \frac{C(\ell)}{\log x},
\end{align}
where
\[	C(\ell) =  (1 - g(\ell))\, e^{-(\ell - 1) N_E(\ell)}
\prod_{\substack{q \equiv -1 \pmod{\ell} \\ q \nmid M_E}} 
\left( 1 - \frac{i_q \varphi(\ell)}{2(q^2 - 1)} \right),\]
and
\[
1 - g(\ell) =
\begin{cases}
	1, & \text{if } \ell \mid M_E, \\[8pt]
	1 - \dfrac{\ell^2 - 2}{(\ell + 1)(\ell - 1)^2}, & \text{if } \ell \nmid M_E.
\end{cases}
\]		
Therefore, the density function \( g  \) satisfies the conditions required for the linear sieve.
To control the error term, we choose the level of distribution \( x^{\alpha} \) so that
\[
\sum_{\substack{d < x^{\alpha} \\ d \mid P(x)}} r(d)
\ll \sum_{d < x^{\alpha}} d^{3/2} x^{\theta} \log(dx)
= o\left( \frac{x}{(\log x)^2} \right).
\]
The admissible range for \( \alpha \) depends on the exponent \( \theta \), and it suffices to require
\begin{equation} \label{alpha_theta}
	\alpha < \frac{2}{5}(1 - \theta).
\end{equation}
Assuming the \( \theta \)-quasi-GRH with \( \theta = \frac{1}{2} \), we may take
\begin{equation} \label{level_of_distribution}
	\alpha = \frac{1}{5} - \epsilon, \quad \text{for some small } \epsilon > 0.
\end{equation}

Let \( 0 < \rho < \delta < \tfrac{1}{2} \) and \( \lambda > 0 \), and define \( z = x^{\rho} \), \( y = x^{\delta} \). Similar to the proof of Proposition~\ref{higher_order}, we define the sieve function
\[
S(x) := |S(\mathcal{A}, z)| - \lambda \sum_{z \leq p < y} w_p\,|S(\mathcal{A}_p, z)|, \quad   w_p := 1 - \frac{\log p}{\log y}.
\]

By \eqref{bound_elements}, each element of \( \mathcal{A} \) satisfies
\(\frac{|E(\mathbb{F}_{p^\ell})|}{|E(\mathbb{F}_p)|} \leq c_\ell\, x^{\ell-1}\)
for some constant \( c_\ell > 0 \). Proceeding as in p.~\pageref{similar_S(x)}, we obtain
\begin{equation}\label{omega_S(x)}
	S(x) \leq \left(1 + \frac{\log c_\ell}{\delta \log x} + \frac{\ell - 1}{\delta} \right) 
	\#\left\{ a \in S(\mathcal{A}, z) : \omega(a) < \frac{1}{\lambda} + \frac{\ell - 1}{\delta} \right\}.
\end{equation}
To replace \( \omega(a) \) with \( \Omega(a) \), we must also bound the contribution from elements \( a \) divisible by \( p^2 \) for some \( z \leq p < y \).

Let \( q\neq\ell \) be a prime such that \(\ell\mid q^2 - 1 \), and recall that \( e_{q,1}, \dots, e_{q,\ell-1} \) denote the roots of the \(\ell\)-th cyclotomic polynomial \( \Phi_\ell \) in \( \mathbb{F}_{q^2} \). By Lemma~\ref{lemma_condition_2}, we have
\[
\#\left\{ p \leq x \,\middle|\, p \nmid q N_E,\; q^2 \mid \frac{|E(\mathbb{F}_{p^\ell})|}{|E(\mathbb{F}_p)|} \right\}
\leq \pi_{C_{q^2}}(x, L_{q^2}/\mathbb{Q}).
\]
Suppose \( q \nmid M_E \). Then \( G_{q^2} \cong \operatorname{GL}_2(\mathbb{Z}/q^2\mathbb{Z}) \). Consider the natural surjection
\[
\operatorname{GL}_2(\mathbb{Z}/q^2\mathbb{Z}) \twoheadrightarrow \operatorname{GL}_2(\mathbb{Z}/q\mathbb{Z}),
\]
whose kernel is isomorphic to \( M_2(\mathbb{Z}/q\mathbb{Z}) \cong \mathbb{F}_q^4 \). It follows that
\[
|G_{q^2}| = q^4 \cdot |\operatorname{GL}_2(\mathbb{Z}/q\mathbb{Z})| = (q + 1) q^5 (q - 1)^2.
\]

	Let \( C_1 \subset G_{q^2} \) be the union of conjugacy classes consisting of elements \( \sigma \in G_{q^2} \) whose characteristic polynomial
	\[
	X^2 - \operatorname{tr}(\sigma) X + \det(\sigma) \pmod q
	\]
	splits  as \( (X - e_{q,i})(X - e_{q,j}) \) for some \( 1 \leq i, j \leq\ell- 1 \), where \( e_{q,i} \in \mathbb{F}_{q^2} \) is a root of \( \Phi_\ell \).
	
If \( \ell \mid q - 1 \), also define \( C_2 \subset G_{q^2} \) to be the union of conjugacy classes consisting of elements \( \sigma \in G_{q^2} \) for which there exists \( 1 \leq k \leq \ell - 1 \) such that
\begin{equation}\label{additional}
	e_{q,k}'^2 - \operatorname{tr}(\sigma)\, e_{q,k}' + \det(\sigma) \equiv 0 \pmod{q^2},
\end{equation}
where \( e_{q,k}' \in \mathbb{Z}/q^2\mathbb{Z} \) is the unique lift of some \( e_{q,k} \in \mathbb{F}_q \).
	
Then \( C_1 \) consists of lifts of those matrices in \( \mathrm{GL}_2(\mathbb{Z}/q\mathbb{Z}) \) whose eigenvalues are precisely \( e_{q,i} \) and \( e_{q,j} \) for some \( 1 \leq i, j \leq \ell - 1 \). 
Let \( D_{q^2} \subset \mathrm{GL}_2(\mathbb{Z}/q\mathbb{Z}) \) denote the union of conjugacy classes consisting of matrices with eigenvalues \( e_{q,i} \) and \( e_{q,j} \) for some \( 1 \leq i, j \leq \ell - 1 \). Then, similar to the counting for \( C_q \), we have
\[
|D_{q^2}| = \binom{\ell - 2}{2} (q+1)q + (\ell - 1) + (\ell - 1)(q+1)(q-1) + \frac{\ell - 1}{2} q(q - 1).
\]
Therefore,
\[
|C_1| = q^4 \cdot |D_{q^2}| = O_\ell(q^6).
\]

The set \( C_2 \subset G_{q^2} \) consists of elements \( \sigma \in G_{q^2} \) that are lifts of elements in \( C_q \subset G_q \) and additionally satisfy equation~\eqref{additional}. Each element of \( C_q \) has \( q^4 \) lifts in \( G_{q^2} \), so the total number of such lifts is \( q^4 |C_q| = O(q^7) \). 
Heuristically, the condition~\eqref{additional} imposes an additional equation that cuts out a hypersurface, reducing the dimension by one when viewed as a quasi-affine variety in \( \mathbb{F}_q^8 \), which yields \(	|C_2| = O(q^6)\).  
We now give an explicit proof in terms of conjugacy classes in \( \operatorname{GL}_2(\mathbb{Z}/q^2\mathbb{Z}) \).

 By \cite[Lemma 2.1]{conjugacy_algebra}, every element \( \sigma \in \operatorname{GL}_2(\mathbb{Z}/q^2\mathbb{Z}) \) can be written in the form
\[
\sigma = dI + q^j \beta,
\]
where \( j \in \{0, 1, 2\} \) is maximal such that \(\sigma\) is congruent to a scalar matrix mod $q^j $   for a unique \( d \in \mathbb{Z}/q^j\mathbb{Z} \), and a unique cyclic matrix \( \beta \in M_2(\mathbb{Z}/q^{2-j}\mathbb{Z}) \). Furthermore, by \cite[Theorem~2.2]{conjugacy_algebra}, each conjugacy class can be represented by a matrix of the form
\[
dI + q^j \begin{pmatrix} 0 & 1 \\ -\det(\beta) & \operatorname{tr}(\beta) \end{pmatrix},
\]
and each such class is uniquely determined by the tuple \( (j, d, \operatorname{tr}(\beta), \det(\beta)) \), where \( d \in( \mathbb{Z}/q^j\mathbb{Z})^\times \) for \( j \geq 1 \), or if \( j = 0 \), then \( \det(\beta) \in (\mathbb{Z}/q^2\mathbb{Z})^\times \). 

 Then \eqref{additional} defines a union of conjugacy classes characterized by the equation
 \[
 e_{q,k }'^2 - (2d + q^j \operatorname{tr}(\beta)) e_{q,k }' + q^{2j} \det(\beta) + d^2 + d q^j \det(\beta) = 0.
 \]
The sizes of conjugacy classes in \( \operatorname{GL}_2(\mathbb{Z}/q^2\mathbb{Z}) \) are described in \cite[Example~4.3.3 and Theorem~4.3.14]{thesis}. We estimate \( |C_2| \) by considering the cases \( j = 0, 1, 2 \) separately, as the parameter \( j \) determines the   size of each conjugacy class:
 \begin{itemize}
	\item When \( j = 0 \), we must have \( d = 0 \). There are \( q(q-1) \) choices for \( \det(\beta) \), and each such choice determines \( \operatorname{tr}(\beta) \). Each conjugacy class with \( j = 0 \) has size \( O(q^4) \), so the total contribution is \( O(q^6) \).
	
	\item When \( j = 1 \), there are \( q - 1 \) choices for \( d \) and \( q^2 \) choices for the pair \( (\det(\beta), \operatorname{tr}(\beta)) \). Each such conjugacy class has size \( O(q^2) \), giving a total contribution of \( O(q^5) \).
	
	\item When \( j = 2 \), there are \( q(q-1)  \) choices for \( d \), and \( q^2 \) choices for \( (\det(\beta), \operatorname{tr}(\beta)) \). Each conjugacy class corresponds to a scalar matrix and has size one, so the total contribution is \( O(q^4) \).
\end{itemize}
Therefore, we conclude that
\[|C_2| = O(q^6) \quad \text{and hence} \quad |C_{q^2}| = |C_1| + |C_2| = O(q^6).\]	
Following the same argument as in Proposition~\ref{function_g}, we obtain
\begin{equation}\label{error_q^2}
	\pi_{C_{q^2}}(x, L_{q^2}/\mathbb{Q}) 
	= \frac{|C_{q^2}|}{|G_{q^2}|} \, \operatorname{li}(x) + O_E\left(q^3 x^{\theta} \log(qx)\right) 
	\ll \frac{1}{q^2} \, \operatorname{li}(x) + q^3 x^{\theta} \log(qx),
\end{equation}
and therefore the total contribution from elements \( a \in S(\mathcal{A}, z) \) divisible by \( q^2 \) for some \( z  < q \leq y \) is bounded by
\[
\#\left\{ p \leq x : p \nmid N_E,\ q^2 \mid  \frac{|E(\mathbb{F}_{p^\ell})|}{|E(\mathbb{F}_p)|},\ z < q \leq y \right\} 
\ll \frac{x}{\log x} \sum_{\substack{z < q \leq y\\ q^2 \equiv 1 \pmod{\ell}}} \frac{1}{q^2} 
+ x^\theta \log x \sum_{z < q \leq y} q^3.
\]
Assuming the \( \theta \)-quasi-GRH with \( \theta = \frac{1}{2} \), the contribution is negligible if we take
\begin{equation}\label{range_delta_Omega}
	y = x^\delta \quad \text{with} \quad \delta < \frac{1}{8},
\end{equation}
so that
\[
\frac{x}{\log x} \sum_{\substack{z < q \leq y \\ q^2 \equiv 1 \!\!\!\! \pmod{\ell}}} \frac{\varphi(\ell)}{q^2} 
+ x^{1/2} \log x \sum_{z < q \leq y} q^3 
\ll _l \frac{x^{1 - \rho}}{\log x} + x^{1/2 + 4\delta} \log x 
= o\left( \frac{x}{(\log x)^2} \right).
\]

We now apply the linear sieve to estimate \( S(x) \). Arguing as in Proposition~\ref{higher_order}, we obtain, for sufficiently large \( x \),
\[
S(x) \geq c_E\, W(z)\, \operatorname{li}(x) \cdot G' (\rho, \delta, \lambda),
\]
where \( W(z) \) is defined in~\eqref{def_W}, and
\[
G'(\rho, \delta, \lambda) = f\left(\frac{\alpha}{\rho}\right) 
- \lambda \int_{\rho}^{\delta} F\left(\frac{\alpha - t}{\rho}\right) 
\left(1 - \frac{t}{\delta}\right) \frac{dt}{t}, \quad \alpha = \frac{1}{5} - \epsilon ,
\]
where \( \alpha \) is the level of distribution, as given in~\eqref{level_of_distribution}, and \( f \), \( F \) denote the lower and upper bound functions of the linear sieve, with explicit formulas given in~\eqref{f} and~\eqref{F}.

A positive lower bound follows as long as \( c_E \neq 0 \) and \( G '(\rho, \delta, \lambda) > 0 \).
To minimize  
\[
\frac{1}{\lambda} + \frac{\ell - 1}{\delta},
\]  
we choose suitable values for \( \rho \), \( \delta \), and \( \lambda \), subject to the constraint \( \delta > \frac{1}{5} \), ensuring that the integrand variable \( \frac{\alpha}{\rho} - \frac{t}{\rho} \) in \( F \) remains positive. Moreover, the arguments of \( f \) and \( F \) must lie within the domains where explicit formulas are valid (see~\eqref{f} and~\eqref{F}):  
\[
0 < s \leq 3 \quad \text{for } F(s), \qquad 2 < s \leq 4 \quad \text{for } f(s).
\]

For the bound involving the number of distinct prime divisors \( \omega(a) \), we take \( \delta = \frac{1}{5.1} \) and compute  
\[
G' \left( \tfrac{1}{20}, \tfrac{1}{5.1}, \tfrac{1}{1.2} \right) = 0.0818,
\]  
which yields the estimate  
\[
\omega(a) \leq \lfloor 5.1(\ell - 1) + 1.2\rfloor
\]  
in~\eqref{omega_S(x)}.

To extend this to the total number of prime divisors \( \Omega(a) \), we require \( \delta < \frac{1}{8} \) (see~\eqref{range_delta_Omega}). Taking \( \delta = \frac{1}{8.1} \), we compute
\[
G' \left( \tfrac{1}{20}, \tfrac{1}{8.1}, \tfrac{1}{0.5} \right) = 0.1114,
\]
which gives the bound
\[
\Omega(a) \leq \lfloor8.1(\ell - 1) + 0.5\rfloor
\]
in~\eqref{omega_S(x)}, with \( \omega(a) \) replaced by \( \Omega(a) \).

Finally, suppose \( E \) is a Serre curve. Then for any \( n \), the index of \( G_n \) in \( \mathrm{GL}_2(\mathbb{Z}/n\mathbb{Z}) \) is at most \( 2 \). From earlier, when \( G_q \cong \mathrm{GL}_2(\mathbb{Z}/q\mathbb{Z}) \), we have
\(\frac{|C_q|}{|\mathrm{GL}_2(\mathbb{F}_q)|} = O\left(\frac{1}{q}\right),\)
which implies that for any square-free \( n \),  \( \frac{|C_n|}{|G_n|} \) is strictly less than \( 1 \). Hence, \( c_E \neq 0 \). By~\cite[Theorem~4]{serre_curves}, almost all elliptic curves are Serre curves, so the result holds for almost all non-CM elliptic curves.
	\end{proof}
\section{A Final Remark}
While this paper focuses on almost prime orders related to Koblitz's problems—namely, \( |E(\mathbb{F}_{p^\ell})| \) for prime \( \ell \)—we now outline the general approach for \( |E(\mathbb{F}_{p^d})| \) with arbitrary \( d \) in the CM case.

Let \( E/\mathbb{Q} \) be a CM elliptic curve with conductor \( N_E \) and CM by the ring of integer  \( \mathcal{O}_K   \) of \(K=  \mathbb{Q}(\sqrt{-D}) \). For primes \( p \nmid  N_E \), we have
\[
|E(\mathbb{F}_{p^d})| = N(\pi_p^d - 1),
\]
where \( \pi_p \in \mathcal{O} _K\) satisfies \(\pi_p^2 -a_p\pi_p +p=0   \). 
Define the element $\pi_E$  by
\[
\pi _E := \gcd\left\{\pi_p^d-1  : p \nmid  N_E,\, p \text{ splits in } K,\, \pi_p^2 -a_p\pi_p +p=0  \right\}.
\]
This integer reflects the factorization of \( \pi_p^d - 1 \) in \( \mathcal{O}_K \). In place of the equation \(\pi_p^2 -a_p\pi_p +p=0 \), the Frobenius element $\pi_p$ can be determined by a congruence condition depending on the level of the associated Grossencharacter. 
We need to first divide   $|E(\mathbb{F}_{p^d})|$ by $N(\pi_d)$ and then apply sieves.  
Indeed, \[
N(\pi_E) = d_E:=  \gcd\left\{ |E(\mathbb{F}_{p^d})|  : p \nmid  N_E,\, p \text{ splits in } K \right\}.
\]

 Applying a \( \tau(d) \)-dimensional sieve to the set
\[
\mathcal{A}(x) := \left\{ \frac{|E(\mathbb{F}_{p^d})|}{N(\pi_E)} =N\left(\frac{\pi_p^d-1}{\pi_E }\right) : p \nmid N_E,\; p \text{ splits in } K,\;  p \leq x \right\}
\]
yields a lower bound \( \gg \frac{x}{(\log x)^{\tau(d) + 1}} \) for the number of primes \( p \) such that all prime divisors of \( \frac{|E(\mathbb{F}_{p^d})|}{N(\pi_E)} \) exceed \( x^\alpha \) for some \( \alpha > 0 \).

%On the other hand, fix a number field \( K \) of  degree \( d \). Then for any \( n \) coprime to \( p \) and \(p\nmid N_E\), we have an embedding
%\[
%E(K)_{\mathrm{tors}}[n] \hookrightarrow E(\mathbb{F}_{p^m}) \quad \text{for some } m \mid d,
%\]
%so that
%\[
%|E(K)_{\mathrm{tors}}[n]| \mid \gcd\left\{ |E(\mathbb{F}_{p^d})| : p \nmid  N_E,\, p \nmid n \right\}.
%\]
%Fix a parameter \( y = o(x) \), and note that
%\[
%\left\lvert \bigcup_{p < y} E(K)_{\mathrm{tors}}[p^\infty] \right\rvert 
%\Biggm| 
%\gcd\left\{ |E(\mathbb{F}_{p^d})| : p > y,\; p \nmid  N_E \right\}.
%\]
%The sieve result gives a lower bound \( \gg \frac{x}{(\log x)^{\tau(d)+1}} \) for the number of primes \( p \leq x \) such that all prime divisors of \( |E(\mathbb{F}_{p^d})|/d_E \) exceed \( x^\alpha \). In particular, this implies
%\[
%\gcd\left\{ |E(\mathbb{F}_{p^d})| : p > y,\; p \nmid N_E \right\} = d_E,
%\]
%and hence,
%\[
%|E(K)_{\mathrm{tors}}| \leq d_E .
%\]

Any prime \( \ell \mid N(\pi_E) \) must satisfy:
\begin{itemize}
	\item if \( \ell \) splits or ramifies in \( K \), then \( \ell - 1 \mid d \);
	\item otherwise, \( \ell^2 - 1 \mid d \).
\end{itemize}
Suppose \( d \) is odd, only \( \ell = 2 \) is possible, so \(N(\pi_E)\) must be a power of \( 2 \).

For example, take the CM curve \( E: y^2 = x^3 - x \), with CM by \( \mathbb{Z}[i] \). Here, \( (1+i)^3 \mid \pi_p - 1 \), but \( 1+i \nmid \sum_{j=0}^{d-1} \pi_p^j \), so for odd \( d \), we   take $d_E=8$ and apply a \( \tau(d) \)-dimensional sieve to
\[
\left\{ \frac{|E(\mathbb{F}_{p^d})|}{8} :   p \leq x,\; p \nmid  N_E,\; p \text{ splits in } \mathbb{Z}[i] \right\}.
\]

For \( d = 2 \), since $(1+i)^3\mid \pi_p-1$ and $(1+i)^2\mid  \pi_p-1$, we take $d_E=32$ and apply a 2-dimensional sieve to
\[
\left\{ \frac{|E(\mathbb{F}_{p^2})|}{32} :   p \leq x,\; p \nmid  N_E,\; p \text{ splits in } \mathbb{Z}[i] \right\} .
\] 
 
%For even degrees \( d > 2 \), \( d_E = N(\pi_E) \) is less constrained and more difficult to determine.    For uniform bounds of \(|E(K)_{\mathrm{tors}}|\), see~\cite{torsion}.

\section*{Acknowledgment}
The author acknowledges support from the Dr. Lois M. Lackner Mathematics Fellowship at the University of Illinois at Urbana-Champaign during the course of this research.

	\bibliographystyle{alpha}

	\bibliographystyle{plain}

\end{document}